\documentclass[11pt,hyp,]{nyjm}
\usepackage{hyperref}
\usepackage{amsmath,amsthm,amsfonts,amssymb,amscd}
\usepackage{amsmath}
\usepackage{enumitem}
\hypersetup{nesting=true,debug=true,naturalnames=true}
\usepackage{graphicx,amssymb,upref}

\let\<\langle
\let\>\rangle

\let\uml\"

\theoremstyle{definition}
\newtheorem{defn}{Definition}[section]
\newtheorem{remark}[defn]{Remark}

\newtheorem{examp}[defn]{Example}
\newtheorem{question}[defn]{Question}

\theoremstyle{theorem}
\newtheorem{prop}[defn]{Proposition}
\newtheorem{cor}[defn]{Corollary}
\newtheorem{thm}[defn]{Theorem}
\newtheorem*{thm*}{Theorem}
\newtheorem*{lem*}{Lemma}
\newtheorem{lem}[defn]{Lemma}

\def\R{\mathbb{R}}

\def\C{\mathbb{C}}
\def\Z{\mathbb{Z}}
\def\N{\mathbb{N}}
\def\Acal{\mathcal{A}}
\def\Bcal{\mathcal{B}}
\def\wt{\widetilde}
\def\eps{\varepsilon}
\def\ra{\rightarrow}


\title{Thin Position through the lens of trisections of 4-manifolds} 

\author{Rom\'{a}n Aranda}  
\address{Department of Mathematics
14 MacLean Hall
Iowa City, Iowa 52242-1419} 
\email{jose-arandacuevas@uiowa.edu}  

\keywords{Heegaard Splittings, Kirby diagrams, Thin position, Trisections of four manifolds}



\begin{document} 
\maketitle
\begin{abstract}  
Motivated by M. Scharlemann and A. Thompson's definition of thin position of 3-manifolds, we define the width of a handle decomposition a 4-manifold and introduce the notion of thin position of a compact smooth 4-manifold. We determine all manifolds having width equal to $\{1,\dots, 1\}$, and give a relation between the width of $M$ and its double $M\cup_{id_\partial} \overline M$. In particular, we describe how to obtain genus $2g+2$ and $g+2$ trisection diagrams for sphere bundles over orientable and non-orientable surfaces of genus $g$, respectively. By last, we study the problem of describing relative handlebodies as cyclic covers of 4-space branched along knotted surfaces from the width perspective. 
\end{abstract} 

\tableofcontents

\section{Introduction} 
In 1994, M. Scharlemann and A. Thompson introduced the notion of thin position of 3-manifolds. In their work \cite{thin_position_of_3m}, they described thin position as follows:

\textit{``Any closed orientable 3-manifold $M$ can be constructed as follows: begin with some 0-handles, add some 1-handles, then some 2-handles, then some more 1-handles, etc... and conclude by adding some 3-handles. Of course $M$ can be built less elaborately: in the previous description, all the 1-handles can be added at once, followed by all the 2-handles. This corresponds to a Heegaard splitting of the manifold; the 0- and 1-handles comprise one handlebody of the Heegaard splitting, the 2- and 3-handles to the other.
The idea of thin position is to build the manifold as first described, with a succession of 1-handles and 2-handles chosen to keep the boundaries of the intermediate steps as simple as possible."}

The complexity that the position of Scharlemann and Thompson seeks to minimize is the width of a handle decomposition of a 3-manifold. It is in terms of the genera of the surfaces $S$ between the 1- and 2-handles.

In dimension four, we can apply a similar reasoning to talk about thin position of a closed 4-manifold $M$ if we add the necessary 4-handles at the end of the process. In this context, the action of alternating between 1-, 2- and 3-handles becomes a suitable decomposition of a handle decomposition $M$ and the question now is what do we want ``as simple as possible" to mean. 

In 2013, D. Gay and R. Kirby \cite{trisecting_four_mans} showed that every closed smooth 4-manifold admits a trisection. 
A trisection of a closed 4-manifold $M$ is a decomposition of $M$ into three 4-dimensional 1-handlebodies with pairwise intersection being a connected 3-dimensional handlebody and triple intersection a connected closed surface.
In \cite{trisecting_four_mans} and \cite{class_trisections}, a correspondance between trisections and certain handle decompositions was described. 
In such decompositions, all the 1-handles are added at once, followed by all the 2-handles and all the 3-handles. The 0- and 1-handles comprise the first 1-handlebody of the trisection, the 3- and 4-handles form the second 1-handlebody, and the third 1-handlebody is given by a suitable neighborhood of the 2-handles. With this in mind, one can think of trisections as the 4-dimensional analogue of Heegaard splittings of $M$ with the ``trisection surface" being the triple intersection of the 4-dimensional 1-handlebodies.
 

In this paper, we use ideas of trisections of 4-manifolds from \cite{class_trisections} to define the width of a handle decomposition of a connected 4-manifold. 
This allows us to define the width of a 4-manifold $M$ by looking at the minimum width among all possible handle decompositions of $M$. Similar to \cite{thin_position_of_3m}, the width of $M$ is a multiset $\{c_i\}$ where $c_i$ is a function of some ``trisection-like surfaces" for the attaching link of the $i^{th}$ 2-handles of $M$ (see Section \ref{def_width} for the detailed definition).
 
In Section \ref{section_width}, we describe situations when the width is not minimal. The interested reader should compare these situations with the rules to decrease the width introduced in \cite{thin_position_of_3m}. We also determine all 4-manifolds satisfying $width(M)=\{1,\dots,1\}$. We show 

\newtheorem*{thm:width_only_ones}{Theorem \ref{width_1111}}
\begin{thm:width_only_ones}
Let $M$ be a connected 4-manifold satisfying $width(M)=\{1,\dots, 1\}$. Then $M$ is diffeomorphic to a (boundary) connected sum of copies of $S^1\times S^3$, $S^1\times B^3$, and linear plumbings of disk bundles over the sphere. 
\end{thm:width_only_ones} 


In Section \ref{section_rel_handlebodies}, we define the notion of the nerve of a 4-manifold $X$ obtained by adding 2-handles to $Y^3\times [0,1]$ along $Y\times \{1\}$, basically the same as the nerve of a trisection for a closed 4-manifold. 
In Subsection \ref{rel_handlebodies_width}, we show how the nerves of the suitable relative handlebodies around the 2-handles carry the width information of the handle decomposition.
We use this to show that the operation of turning a handle decomposition up-side-down leaves the width invariant. We also prove

\newtheorem*{prop:union}{Proposition \ref{width_union}}
\begin{prop:union} 
Let $M$ and $N$ be connected 4-manifolds with non-empty boundary. Suppose $f:\partial M\ra \partial N$ is a diffeomorphism between their boundaries. Then 
\[width(M\cup_f N) \leq width(M)\cup width(N)\]
\end{prop:union}

In Subsection \ref{rel_handlebodies_trisections}, we explain how handle decompositions of closed 4-manifolds of width equal to a single element correspond to trisections of 4-manifolds. This allows us to specialize some of our results to the theory of trisections of 4-manifolds (see Example \ref{sphere_bundles} and Section \ref{app_trisections}).

In Example \ref{sphere_bundles}, we give an upper bound for the width of sphere bundles over closed surfaces. In particular, we are able to obtain low genus trisection diagrams for such manifolds. 
Denote by $X_{g,n}$, $Y_{g,n}$ the disk bundles of euler number $n$ over an orientable and non-orientable surface of genus $g$, respectively. We describe how to obtain trisection diagrams of genus $2g+2$ and $g+2$ for $D(X_{g,n})$ and $D(Y_{g,n})$, respectively (see Example \ref{sphere_bundles}). It is important to mention that trisection diagrams for such manifolds have been described before in \cite{trisecting_four_mans} and \cite{Nick_Castro_Bundles}. But the existance of lower genus diagrams was proven in \cite{Simplifying_fibrations} with no explicit drawings. In Figure \ref{disk_bundles_2}, we draw explicit lower genus diagrams for $D(X_{1,0})$ and $D(Y_{1,1})$.

In Section \ref{section_symmetries}, we study the problem of describing relative handlebodies as cyclic covers of 4-space branched along knotted surfaces from the width perspective. 
We use ideas from \cite{bridge_trisection_S4}, \cite{bridge_trisections_4M} and \cite{characterizing_dehn_surgery} to prove

\newtheorem*{cor:cor_sym_trisections}{Theorem \ref{cor_sym_trisections}}
\begin{cor:cor_sym_trisections} \label{cor_sym_trisections}
Let $M$ be a closed 4-manifold. Then $M$ is the $p$-fold cyclic cover of $S^4$ branched along a knotted surface $K^2\subset S^4$ if and only if $M$ admits a $p$-symmetric trisection diagram. 
\end{cor:cor_sym_trisections} 
In \cite{thin_position_of_3m}, M. Scharlemann and A. Thompson proved that 3-manifolds of $width<\{5\}$ are 2-fold branched covers of connected sums of $S^1\times S^2$. Subsection \ref{extending_involutions} is a digression on an attempt of lifting this result to 4-manifolds with connected boundary. 

\textbf{Acknowledgements.} The author would like to thank the Topology group of the University of Iowa for listening and commenting on previous versions of this work. Special thanks to Maggy Tomova, Charles Frohman and Mitchell Messmore.


\section{Preliminaries} 

Along this work, all manifolds will be compact, smooth, oriented and connected unless the opposite is stated. For $A\subset B$ an embedded submanifold of any dimension, $\eta(A)$ will denote the closed tubular neigborhood of $A$ in $B$. 

For $0\leq k\leq n$, an $n$-dimensional $k$-handle is a copy of $D^k\times D^{n-k}$, attached to the boundary of an $n$-manifold $X$ along $(\partial D^k)\times D^{n-k}$ by an embedding $\varphi:(\partial D^k)\times D^{n-k}\ra \partial X$. By definition, 0-handles are attached along the empty set.
In dimension 3, 1-handles are attached along pairs of disjoint disks, 2-handles along annuli, and 3-handles along 2-spheres. In dimension 4, 1-handles are attached along pairs of disjoint 3-balls, 2-handles along solid tori (hence framed knots), 3-handles along thickened 2-spheres, and 4-handles along copies of $S^3$ in $\partial X$.
 
A \textbf{handle decomposition} of $X$ (relative to $\partial_-X$) is an identification of $X$ with a manifold obtained from $\partial_-X \times [0,1]$ by attaching handles along $\partial_-X\times\{1\}$. We will usually start building $X$ with $\partial_-X=\emptyset$ thus start by adding some 0-handles. 
The action on $\partial X^n$ of a $k$-handle addition $h^k=D^k\times D^{n-k}$ is by surgery on $X$. In other words, the new boundary is given by $\overline{\partial X-\varphi\left((\partial D^k)\times D^{n-k}\right)} \cup_\varphi \left( D^k\times \partial(D^{n-k})\right)$. 
For a more detailed review of the calculus on handlebody diagrams, Chapter 1 of \cite{Akbulut_notes} or Chapters 4 and 5 of \cite{gompf_stip} are good references. 

The 4-manifold obtained by attaching a handle $h$ to the 4-manifold $X$ will be denoted by $X[h]$. If $Y=\partial X$, we will write $Y[h]$ to denote the surgered 3-manifold $\partial (X[h])$. If $L$ is a framed link in $Y^3$, $Y[L]$ will denote the surgered $3-$manifold.
For simplicity, a 1-handlebody will mean a 4-ball with some 4-dimensional 1-handles attached and the three dimensional analugue will be called just handlebody. 

A \textbf{Heegaard splitting} of a closed 3-manifold $Y$ is a decomposition of $Y$ in two handlebodies with common part a connected surface of genus $g$. The smallest such $g$ is the Heegaard genus of $Y$, $HG(Y)$. 

A \textbf{trisection} of a closed 4-manifold $M$ is a decomposition of $M$ in three 1-handlebodies $M=X_1\cup X_2\cup X_3$ so that the double intersections are connected 3-dimensional handlebodies, $X_i\cap X_j =H_{i,j}$; and the triple intersection is a closed connected surface of genus $g$, $\Sigma$. It follows from the definitions that $\partial X_i = H_{i,k}\cup H_{i,j}$ is a Heegaard splitting for the boundary of $X_i$, which is homeomorphic to the connected sum of some copies of $S^1\times S^2$. The triplet $(\Sigma; H_{1,2}, H_{2,3}, H_{3,1})$ is called the \textbf{nerve} of the trisection. 

Let $L\subset Y$ be a link in a connected closed 3-manifold. Denote by $t_Y(L)$ the smallest number of embedded arcs $t\subset Y$ with $t\cap L=\partial t$ such that $\overline{Y-\eta(L\cup t)}$ is a handlebody. $t_Y(L)$ is called the \textbf{tunnel number} of $L$ in $Y$ and $t$ is a system of tunnels for $L$. Note that the surface $\Sigma =\partial \eta(L\cup t)$ induces a Heegaard splitting of $Y=H\cup_\Sigma H'$ with $L\subset core(H)$. Define $g(Y,L)$ to be the smallest genus of a Heegaard surface for $Y$ with $L$ being a subset of the core of one of the handlebodies. We have 
\[ t_Y(L)=g(Y,L)+1\] 
The equation above will serve to us as an equivalent definition of tunnel number. If $L=\emptyset$, define $t_Y(\emptyset)=HG(Y)+1$. 
We will need the following computation of the tunnel number of split links. 
\begin{lem} \label{tunnel_of_conn_sum}
Let $K_1$, $K_2$ be links inside closed three-manifolds $X_1$, $X_2$, respectively. Let $\wt{X}=X_1\#X_2$ and $\wt{K}=K_1\cup K_2\subset \wt{X}$. Then 
$$t_{\wt{X}}(\wt{K}) = t_{X_1}(K_1) + t_{X_2}(K_2) +1$$
Furthermore, if $K_1\neq\emptyset$ and $K_2= \emptyset$, then $$t_{\wt{X}}(\wt{K}) = t_{X_1}(K_1) + HG(X_2)$$
\end{lem}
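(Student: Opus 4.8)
The plan is to pass to link exteriors, where the tunnel number becomes a normalized Heegaard genus, and to deduce everything from one fact: Heegaard genus is additive under internal connected sum (Haken's lemma). First I would record the dictionary. For $L\neq\emptyset$, a system of $n$ tunnels $t$ for $L\subset Y$ is exactly a Heegaard splitting $Y-\eta(L)=C\cup_\Sigma H$ in which $H=\overline{Y-\eta(L\cup t)}$ is a handlebody and $C=\overline{\eta(L\cup t)-\eta(L)}$ is a compression body with $\partial_-C=\partial(Y-\eta(L))$, the surface $\Sigma=\partial_+C$ having genus $n+1$; conversely any such splitting yields a tunnel system. Write $g_-(W)$ for the minimum genus of such a boundary-adapted Heegaard splitting of a compact orientable $3$-manifold $W$, so that $g_-(W)=HG(W)$ when $W$ is closed. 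Then $g_-(Y-\eta(L))=t_Y(L)+1$ for $L\neq\emptyset$, while $g_-(Y)=HG(Y)=t_Y(\emptyset)-1$ under the stated convention. Since connect-summing $X_1$ and $X_2$ along balls disjoint from $K_1,K_2$ induces an internal connected sum of exteriors, we have $\wt X-\eta(\wt K)=(X_1-\eta(K_1))\#(X_2-\eta(K_2))$ when $K_1,K_2\neq\emptyset$, and $\wt X-\eta(\wt K)=(X_1-\eta(K_1))\#X_2$ when $K_2=\emptyset$. Hence both claimed formulas follow from the additivity
\[ g_-(W_1\#W_2)=g_-(W_1)+g_-(W_2). \]
Indeed, in the first case this reduces to $t_{\wt X}(\wt K)+1=(t_{X_1}(K_1)+1)+(t_{X_2}(K_2)+1)$, and in the second to $t_{\wt X}(\wt K)+1=(t_{X_1}(K_1)+1)+HG(X_2)$. (The case $K_1=K_2=\emptyset$ is just Haken's additivity of Heegaard genus.)

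For the inequality $g_-(W_1\#W_2)\leq g_-(W_1)+g_-(W_2)$ I would take minimal boundary-adapted splittings $W_i=C_i\cup_{\Sigma_i}H_i$ and form the connected sum along a ball meeting each $\Sigma_i$ in a single disk, half lying in $H_i$ and half in $C_i$, with the $C_i$-half pushed off $\partial_-C_i$. Regluing identifies the two sides of the new splitting surface $\Sigma_1\#\Sigma_2$ with $H_1\natural H_2$, still a handlebody, and $C_1\natural C_2$, still a compression body with $\partial_-=\partial W_1\sqcup\partial W_2=\partial(W_1\#W_2)$; and $\Sigma_1\#\Sigma_2$ has genus $g_-(W_1)+g_-(W_2)$. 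In the language of tunnels this is simply: take the union of the two tunnel systems and add one more arc through the connect-sum region joining the two pieces.

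The reverse inequality is the real content and is Haken's lemma. We may assume neither $W_i$ is $S^3$, since otherwise the statement is trivial (in the application $X_i-\eta(K_i)$ has nonempty torus boundary whenever $K_i\neq\emptyset$, and if $K_2=\emptyset$ with $X_2=S^3$ the lemma is vacuous), so the connect-sum sphere $S$ is essential. Given any boundary-adapted splitting $W_1\#W_2=C\cup_\Sigma H$, Haken's lemma permits an isotopy of $S$ after which $S\cap\Sigma$ is a single circle; this circle must be essential on $\Sigma$, or else $S$ would bound a ball, so $S\cap H$ and $S\cap C$ are essential disks. Cutting along $S$ and capping with balls then exhibits boundary-adapted splittings of $W_1$ and of $W_2$ whose genera add up to that of $\Sigma$, whence the genus of $\Sigma$ is at least $g_-(W_1)+g_-(W_2)$; minimizing over $\Sigma$ gives the claim. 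The main obstacle is precisely this last step — arranging that an essential sphere meets the Heegaard surface in a single curve — in the generality of compression-body splittings of a bounded $3$-manifold. I would either cite the standard reducibility theorems for Heegaard splittings of reducible $3$-manifolds, or, for a self-contained proof, run the usual innermost-disk and edge-sliding argument while carrying the $\partial_-$ strata along; this is routine but somewhat lengthy.
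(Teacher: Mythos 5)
Your argument is essentially the paper's own: the upper bound comes from amalgamating the two tunnel systems with one extra connecting arc, and the lower bound from viewing a minimal tunnel system as a handlebody/compression-body splitting of the link exterior and applying Haken's lemma to split it along the connect-sum sphere so that genera (hence tunnel numbers) add. Your reformulation via the relative invariant $g_-$ of the exteriors is only a repackaging of that same argument, and the Haken-type step you flag as the main obstacle is exactly the point the paper also handles by citation rather than proof.
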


\begin{proof} 
We will prove the equation for $K_1,K_2\neq \emptyset$, the other case is similar. 
One can see that the LHS is smaller by constructing a system of tunnels of cardinality $t_{X_1}(K_1) + t_{X_2}(K_2) +1$. 

We now prove $LHS\geq RHS$. Let $t\subset \wt{X}$ be a system of tunnels for $\wt{K}$ in $\wt{X}$ with $|t|=t_{\wt{X}}(\wt{K})$, and let $H=\eta (\wt{K}\cup t)$, $H'=\wt{X}-int(H)$, $\Sigma=H\cap H'$.  
By construction, $\Sigma$ is a Heegaard splitting of genus $t_{\wt{X}}(\wt{K})+1$ for $\wt{X}-int(\eta(\wt{K}))$ with $H'$ a handlebody and $H-int(\eta(K))$ a compression body with inner boundary the collection of tori given by $\partial \eta(\wt K)$. An application of Haken's Lemma gives us the existance of a sphere $S$ intersecting $\Sigma$ in one simple closed loop, separating $\wt X$ in $\big(punc(X_1),K_1\big)$ and $\big(punc(X_2),K_2\big)$, here $punc(A)$ denotes $A$ minus an open 3-ball. 
In other words, $(\Sigma; H,H')$ is the connected sum of Heegaard splittings for $X_1$ and $X_2$, say $(\Sigma_i; H_i, H'_i)$ for $i=1,2$, satisfying that $H_i -int(\eta(K_i))$ is a compression body with inner boundary the tori $\partial \eta(K_i)$. 
But recall that a compression body deformation retracts to the wedge of its inner boundary with a finite collection of arcs in the interior of the compression body with endpoints on the inner boundary. Thus $H_i$ is the tubular neighborhood of the union of $K_i$ with a collection of $t_i$ arcs with endpoints on $K_i$. This shows that $t_i \geq t_{X_i}(K_i)$. By last, notice that $t_i+1=g(\Sigma_i)$ and, since $\Sigma=\Sigma_1\# \Sigma_2$, we get 
\[ t_{\wt{X}}(\wt K) + 1 = g(\Sigma) = g(\Sigma_1) + g(\Sigma_2) = t_1+t_2 +2\geq  t_{X_1}(K_1)+  t_{X_2}(K_2) +2\]
Hence, $LHS\geq RHS$.
\end{proof}

\section{The width of a Kirby diagram}\label{section_width}
In the following section, we define the notion of the width of a handle decomposition of a 4-manifold and the width of a 4-manifold. Following \cite{thin_position_of_3m} closely, we describe specific cases when the width of a decomposition is not minimal and classify 4-manifolds with low width.

\subsection{Definition of width} \label{def_width}
Let $M$ be a connected, compact, smooth 4-manifold. Consider a handle decomposition of $M$ \[\mathcal{H}=b_0\cup C_1\cup D_1\cup E_1\cup C_2\cup D_2\cup E_2\cup \dots C_N\cup D_N\cup E_N\cup b_4\] where $b_0$, $\cup C_i$, $\cup D_i$, $\cup E_i$, $b_4$ are collections of 0-handles, 1-handles, 2-handles, 3-handles and 4-handles, respectively. We are thinking of building $M$ in steps; starting with $b_0$, then adding $C_1$, then $D_1$, then $E_1$, etc.

For each $1\leq i \leq N$, denote by $Y_i=\partial \left( b_0\cup C_1\cup D_1\cup E_1\cup \dots \cup C_i\right)$ and let $L_i \subset D_i \cap Y_i$ be the cores of the attaching regions of the 2-handles $D_i$. Note that $Y_i$ might be disconnected and that $L_i$ is a link (possibly empty) in each component of $Y_i$. For a connected component $Y'$ of $Y_i$, set $L'=L_i\cap Y'$. If $L'$ is not empty, define $c(L',Y')=2t(L',Y')+1$; if $L=\emptyset$ define $c(L',Y')=max\{2HG(Y')-1, 0\}$ where $HG(Y')$ is the Heegaard genus of $Y'$. Define $$c_i =c(L_i,Y_i):= \underset{Y'\subset Y_i}{\sum} c(L',Y').$$

\begin{defn}
The \textbf{width of the handle decomposition} $\mathcal{H}$ is the multiset $\{c_i\}$. The width of $M$ is the infimum of all widths among all possible handle decompositions of $M$. The infimum is taken with respect to the following order: 

Let $A=\{a_1 \geq a_2 \geq \dots \}$ and $B=\{b_1\geq b_2\geq \dots \}$ be two bounded multisets of $\N\cup \{0\}$ ordered in a decreasing way. Then $A< B$ if and only if there is an index $i\geq1$ such that $a_j=b_j$ for $j\leq i-1$ and $a_i<b_i$.
\end{defn}

We say that $M$ is in \textbf{thin position} if there is $M=b_0\cup C_1\cup D_1\cup E_1\cup \dots E_N\cup b_4$ with minimal $width$. It is important to mention that the infimum is always achieved; so for smooth compact $4-$manifolds the width always exists. For completeness, we include a proof of this fact in Lemma \ref{minimum_element}.

\begin{remark} 
The decomposition $b_0\cup b_4$ has empty width so $width(S^4)=\emptyset$. For non-empty diagrams, the width will be a finite multiset of odd integers $c_i\geq 1,\forall i$. 
\end{remark} 

\begin{lem} \label{minimum_element}
Let $X$ be the set of all sequences of non-negative integers with finitely many non-zero elements endowed with the order described above. Then any non-empty set has a minimal element. 
\end{lem}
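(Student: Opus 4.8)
The plan is to build the minimum element of a given non-empty subset $S\subseteq X$ greedily, one coordinate at a time, and to show that the process terminates precisely because every element of $X$ has finite support. (Since the order is total, a minimal element here is the same thing as a minimum, so it suffices to produce $\min S$.)

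First I would set $S_0=S$ and, inductively, define $m_k=\min\{a_k : a\in S_{k-1}\}$ and $S_k=\{a\in S_{k-1} : a_k=m_k\}$; each $m_k$ is the minimum of a non-empty set of non-negative integers, and an immediate induction shows that each $S_k$ is non-empty. Because every $a\in X$ is a non-increasing sequence, any $a\in S_k$ satisfies $a_{k+1}\leq a_k=m_k$, and hence $m_{k+1}\leq m_k$; so $(m_k)_k$ is a non-increasing sequence of non-negative integers.

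The heart of the argument is to show that $m_k=0$ for some $k$. If not, then $m_k\geq 1$ for all $k$, and being a non-increasing sequence of positive integers it is eventually constant, say $m_k=c\geq 1$ for all $k\geq J$. Choosing any $a\in S_J$, an induction on $i\geq J$ — using $c=m_{i+1}\leq a_{i+1}\leq a_i=c$ to force $a_{i+1}=m_{i+1}$, hence $a\in S_{i+1}$ — shows that $a_i=c\geq 1$ for every $i\geq J$, contradicting that $a$ has only finitely many non-zero entries. I expect this termination step to be the only real obstacle: it is exactly the place where the ``non-increasing'' (partition) shape of the elements of $X$ is used, since among arbitrary finitely-supported sequences the chain $(1,0,0,\dots)>(0,1,0,\dots)>(0,0,1,0,\dots)>\cdots$ has no minimal element.

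Finally, let $k$ be the least index with $m_k=0$. Since each $a\in S_k$ is non-increasing with $a_k=0$, it vanishes from the $k$-th coordinate on, while $a_i=m_i$ for $i<k$; hence $S_k=\{m\}$ where $m:=(m_1,\dots,m_{k-1},0,0,\dots)\in X$, and in particular $m\in S$. Given any $a\in S$ with $a\neq m$, I would let $\ell$ be the first coordinate at which $a$ and $m$ differ; one checks $a\in S_{\ell-1}$, that $\ell\leq k$ (otherwise $a_k=m_k=0$ forces $a=m$), and then $a_\ell\geq m_\ell$ by the minimality defining $m_\ell$, so $a_\ell>m_\ell$ and $m<a$. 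Thus $m=\min S$, which is the desired minimal element; as a byproduct this shows that $(X,<)$ is in fact a well-order.
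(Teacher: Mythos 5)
Your proof is correct and takes essentially the same route as the paper: both arguments greedily minimize the $k$-th largest entry over a nested chain of nonempty subsets, use the finiteness of the support to force the stabilized value of these minima to be $0$, and then check that the resulting element is the minimum of the original set. Your version is a bit more explicit than the paper's (in particular about the induction showing the chosen element stays in every $S_i$ and about the final minimality check), but the underlying idea is identical.
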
 

\begin{proof} 
Let $\Acal \subset X$ be non-empty. For an element $P\in X$, we define $P^{(k)}\in \N \cup \{0\}$ to be the $k-$th largest element in P. Take $\Bcal^{(0)} = \Acal$ and define the set $\Acal ^{(1)}=\{P^{(1)} | P\in \Bcal^{(0)}\}$. Since $\Acal^{(1)} \subset \N \cup \{0\}$ we can consider $\alpha _1= \min (\Acal^{(1)})$ and $\Bcal ^{(1)}=\{P\in \Bcal ^{(0)}| P^{(1)}=\alpha _1\}$. We inductively define the sets $\Bcal^{(n)}=\{P\in \Bcal ^{(n-1)}| P^{(n)}=\alpha _{n}\}$, $\Acal^{(n)}=\{P^{(n)} | P\in \Bcal^{(n-1)}\}$ and $\alpha_n= \min (\Acal^{(n)})$. By definition, the sequence $(\alpha_n)_n$ is decreasing in $\N \cup \{0\}$, so it is stationary; say $\alpha_n=\alpha\geq 0$ for all $n\geq N$. Let $P\in \Bcal^{(N)}$. By construction $P^{(j)} = \alpha_j$ for all $j\leq N$. For $l\geq 0$ we have the following inequalities, 
\[ \alpha = P^{(n)} \geq P^{(N+l)} \geq \alpha_{N+l} = \alpha \] 
Thus $P^{(m)}=\alpha$ for all $m\geq N$. But $P\in \Acal$, so all but finitely many components are non-zero. Hence $\alpha = 0$ and $P=\min (\mathcal{A})$. 
\end{proof} 

\subsection{{Ways to decrease the width}}

\begin{prop}\label{props_of_thin}
Let $\mathcal{H}=b_0\cup C_1\cup D_1\cup E_1\cup \dots \cup E_N\cup b_4$ be a thin position of $M$. For every $i$, define $Z_i=\partial b_0\left[ C_1\cup D_1\cup D_1\cup \dots \cup D_i\right]$. 
\begin{enumerate} 
\item Suppose that for some $1\leq i<N$, one of the level 3-manifolds $Z_i$, $Z_i[E_i]$, or $Z_i[E_i\cup C_{i+1}]$ is diffeomorphic to $S^3$, say X. Then $\overline{M-X}$ has no components diffeomorphic to $B^4$ unless one of them is equal to $b_0$ or $b_4$. 
\item If $D_i=E_i=\emptyset$ for some $1\leq i\leq N$, then $i=N$.
\item If $C_i=D_i=\emptyset$ for some $1\leq i\leq N$, then $i=1$. 
\end{enumerate}
\end{prop}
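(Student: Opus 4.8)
The plan is to establish all three items by contradiction, starting from a single bookkeeping principle. Suppose $\mathcal{H}$ is thin. The elementary moves I will use — merging two consecutive blocks of $1$-handles with nothing attached between them, merging two consecutive blocks of $3$-handles, and excising a codimension-$0$ submanifold with boundary $S^3$ and regluing a single standard handle in its place — none of them reorder handles of strictly decreasing index, and under each of them every level $3$-manifold $Y_j$ and every attaching link $L_j$ of the resulting decomposition $\mathcal{H}'$ is diffeomorphic to one already occurring in $\mathcal{H}$. Consequently $width(\mathcal{H}')$ is obtained from the multiset $\{c_j\}$ by deleting some of its entries, and to reach a contradiction it suffices to arrange that one of the deleted entries is positive.

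For item (2): if $D_i=E_i=\emptyset$ with $i<N$, then round $i$ is a pure block of $1$-handles directly preceding the block $C_{i+1}$, so I merge $C_i$ into $C_{i+1}$. The bookkeeping principle gives $width(\mathcal{H}')=\{c_j\}\setminus\{c_i\}$, and minimality of $\mathcal{H}$ forces $c_i=0$; equivalently the level $Y_i$ (which, with $D_i=E_i=\emptyset$, coincides with both $Z_i$ and $Z_i[E_i]$) is a disjoint union of $3$-spheres. At that point I would feed the spherical components of $Y_i$ into item (1) and iterate the reduction to remove the remaining trivial rounds, forcing $i=N$. For item (3): $C_i=D_i=\emptyset$ means round $i$ is a pure block of $3$-handles sitting immediately above the block $E_{i-1}$, so one merges $E_{i-1}$ with $E_i$; this is literally item (2) read on the upside-down decomposition, and the same analysis gives $i=1$.

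For item (1): suppose $X\cong S^3$ is one of the three indicated level $3$-manifolds and $B\subset\overline{M-X}$ is a component with $B\cong B^4$ but $B\neq b_0,b_4$. Since $X$ is connected it must be separating, and $B$ is precisely the portion of $M$ lying on one side of the level, assembled from a nonempty collection of positive-index handles; as $\partial B=X=S^3$, I delete all of those handles and glue in a single standard $0$-handle, respectively $4$-handle, where $B$ was. This produces an honest handle decomposition $\mathcal{H}'$ of $M$ whose level manifolds and attaching links on the far side of $X$ are untouched, so $width(\mathcal{H}')$ is $\{c_j\}$ with the entries coming from the rounds contained in $B$ deleted. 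Because $B\neq b_0,b_4$, at least one of those rounds carries $2$-handles — or is one of the degenerate rounds treated below — and hence contributes a positive entry, so $width(\mathcal{H}')<width(\mathcal{H})$, the desired contradiction.

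The genuine difficulty is the degenerate case hiding in each of these arguments: when all the entries being deleted are $0$, i.e. when the offending block of handles uses only $1$- and $3$-handles and all of its intermediate $3$-dimensional boundaries are unions of $3$-spheres, the width multiset does not strictly drop. Handling this seems to require either first placing $\mathcal{H}$ in a normal form — for instance, among all thin positions choose one with the fewest rounds, and then the fewest handles — so that such blocks cannot occur, or iterating the reductions and using that a handle decomposition is finite, so the process terminates at a thin position with the claimed structure. The remaining work is routine but must be carried out carefully: for each of the three moves one has to check that the new level $3$-manifolds and attaching links are indeed diffeomorphic to old ones, so that the effect on the width is exactly the deletion of a sub-multiset of $\{c_j\}$.
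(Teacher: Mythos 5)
Your three reduction moves are exactly the ones the paper uses: for (2) merging $C_i$ into $C_{i+1}$, for (3) the corresponding move on blocks of 3-handles (the paper instead turns $\mathcal{H}$ upside down and invokes Lemma \ref{lem_op}; your direct merge avoids that forward reference and is legitimate), and for (1) excising the $B^4$ side of the $S^3$ level and regluing a single 0- or 4-handle, with the effect on the width being deletion of the entries coming from the suppressed rounds. The genuine gap is the one you yourself flag and never close: you do not establish that a deleted entry is positive, and without that none of the three moves contradicts minimality. The paper closes precisely this point by its standing convention (the remark following the definition of width) that every entry $c_i$ of a nonempty diagram is an odd integer $\geq 1$; with that in hand, each of the three moves strictly decreases the width and thinness is contradicted outright, which is the entire content of the paper's proof.

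Moreover, your proposed ways of finishing do not repair the gap. In item (2) you conclude only that $c_i=0$, i.e.\ that $Y_i$ is a union of 3-spheres, and then want to ``feed the spherical components into item (1) and iterate''; but item (1) is an implication in the wrong direction for this purpose --- it rules out $B^4$ complementary components of an $S^3$ level, it does not force the decomposition to terminate --- so no argument for $i=N$ is actually given, and item (3), which you reduce to item (2), inherits the same defect. The fallback of first passing to a normal form (fewest rounds, then fewest handles, among thin positions) would prove a statement about \emph{some} thin position, whereas the proposition is asserted for an \emph{arbitrary} thin position $\mathcal{H}$; and ``iterate and use finiteness'' at best shows the width does not increase, not that the offending round cannot occur. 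The same unresolved degenerate case (a $B^4$ piece built only from 0- and 1-handles, or 3- and 4-handles, all of whose intermediate levels are unions of spheres) is left open in your item (1) as well. So as written the proposal reproduces the paper's moves but does not complete the proof of any of the three items.
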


\begin{proof}\quad
\begin{enumerate}
\item We will prove the case $X=Z[E_i]=S^3$, the other cases are similar. Within this case, we have two options. If $M_i\approx B^4$, we can remove the handles $\{C_l\cup D_l\cup E_l | l\leq i \}$ to reduce the width. If $\overline{M-M_i}\approx B^4$, we can remove the handles $\{C_l\cup D_l\cup E_l | l> i \}$ to reduce the width. 
\item
Suppose $D_i=E_i=\emptyset$ for some $i<N$. We will show that $D_{i+1}$ and $C_{i+1}$ are both non-empty.
Suppose first $D_{i+1}=\emptyset$. In particular, $c_{i+1}=c_{i}+2|C_{i+1}|$. If we consider the new decomposition of $M$ by merging (taking the union of) $C_i$ and $C_{i+1}$ we will get the multiset $\{ c_j | j\neq i\} $, which has lower width, a contradiction. 
In a similar fashion, suppose that $C_{i+1}$ is empty: By defining $D_i$ to be $D_{i+1}$, we obtain a new decomposition of $M$ with width being the multiset $\{c_l:l\neq i\}$, which is smaller, contradicting the minimality of the width. 

We have shown that if $D_i=E_i=\emptyset$, then $D_{i+1}$ and $C_{i+1}$ are both non-empty.
Notice that $Y_{i+1}=Y_{i}[C_{i+1}]$. Thus if we take the new decomposition given by merging $C_i$ and $C_{i+1}$, then the complexity $c_{i+1}$ will not change. The width of this new decomposition will be the original width without the element $c_i\geq 1$, which is strictly lower. 

Therefore, $D_i\cup E_i$ has to be non-empty if $i<N$. 
\item 
The result follows by turning $\mathcal{H}$ up-side-down and applying Part 2 of this proposition (see Lemma \ref{lem_op}). 
\end{enumerate} 
\end{proof}

We say that a link $L\subset Y$ in a 3-manifold splits if there exists a separating sphere $S\subset Y$ disjoint from $L$. In such case, we can decompose $Y=A\#_S B$, $L=L^A\cup L^B$ with $L^A\subset A$ and $L^B\subset B$ links in the corresponding pieces, $L^A,L^B\neq \emptyset$. The following proposition studies how the width could change if one of the attaching links of the 2-handles splits in $Y_i$. 

\begin{prop} 
Let $\mathcal{H}=b_0\cup C_1\cup D_1\cup E_1\cup \dots \cup E_N\cup b_4$ be a handle decomposition for $M$. Suppose that there is a component $Y_i^*\subset Y_i$ so that the link $L_i^*=Y_i^*\cap D_i$ splits in $Y_i^*$. Write $Y_i^*=A\#_S B$, $L_i^*=L_i^A\cup L_i^B$ and let $D_i^B\subset D_i$ be the 2-handles with attaching region $L_i^B$. 

Consider the new handle decomposition $\mathcal{H}'$ of $M$ obtained from $\mathcal{H}$ by separating the 2-handles of $D_i$, adding first $D_i-D_i^B$ and then $D_i^B$. 
\begin{enumerate} 
\item If $t_B(L_i^B)>HG(B)$ or $t_A(L_i^A)>HG(A[L_i^A])$, then \[width(\mathcal{H}')<width(\mathcal{H}).\] 
\item If $t_B(L_i^B)=HG(B)$ and $t_A(L_i^A)=HG(A[L_i^A])$, then \[width(\mathcal{H}')>width(\mathcal{H}).\]
\end{enumerate}
\end{prop}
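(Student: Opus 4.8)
The plan is to write the new decomposition $\mathcal{H}'$ explicitly, observe that passing from $\mathcal{H}$ to $\mathcal{H}'$ changes the multiset of complexities only by replacing the single entry $c_i$ with two new entries, compute these with Lemma~\ref{tunnel_of_conn_sum}, and then compare the two multisets using the order on multisets.

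First I would pin down $\mathcal{H}'$. Writing $\mathcal{H}=b_0\cup C_1\cup\dots\cup E_N\cup b_4$, the decomposition $\mathcal{H}'$ replaces the $i$-th block $C_i\cup D_i\cup E_i$ by the two consecutive blocks $C_i\cup(D_i\setminus D_i^B)\cup\emptyset$ and $\emptyset\cup D_i^B\cup E_i$, shifting the labels of all later blocks up by one. Nothing before the $i$-th block is touched, so the $i$-th level $3$-manifold of $\mathcal{H}'$ is again $Y_i$; moreover attaching $D_i\setminus D_i^B$ and then $D_i^B$ to $Y_i$ produces $Y_i[L_i]$, so the level $3$-manifolds occurring after the $(i+1)$-st block of $\mathcal{H}'$ coincide with those occurring after the $i$-th block of $\mathcal{H}$. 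Hence $c_j$ is unchanged for $j\ne i$, while the single complexity $c_i$ of $\mathcal{H}$ is replaced in $\mathcal{H}'$ by the pair
\[
c_i'=c\bigl(L_i\setminus L_i^B,\ Y_i\bigr),\qquad c_{i+1}'=c\bigl(L_i^B,\ Y_i[L_i\setminus L_i^B]\bigr).
\]

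Next I would evaluate these three numbers with Lemma~\ref{tunnel_of_conn_sum}. On the component $Y_i^*=A\#_S B$ the attaching link $L_i^*=L_i^A\sqcup L_i^B$ splits, so $t_{Y_i^*}(L_i^*)=t_A(L_i^A)+t_B(L_i^B)+1$; deleting the $2$-handles $D_i^B$ leaves only $L_i^A\subset A$ on $Y_i^*$, and the second part of Lemma~\ref{tunnel_of_conn_sum} gives $t_{Y_i^*}(L_i^A)=t_A(L_i^A)+HG(B)$; and since the connect-sum sphere $S$ is disjoint from $L_i^A$, surgering $Y_i^*$ along $L_i^A$ turns it into $A[L_i^A]\#B$ with $L_i^B$ still inside the $B$-summand, so a third application gives $t_{A[L_i^A]\#B}(L_i^B)=t_B(L_i^B)+HG(A[L_i^A])$. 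The remaining components of $Y_i$ contribute the same summand to $c_i$ and to $c_i'$, whereas their surgered versions contribute only empty-link complexities to $c_{i+1}'$. Feeding this into the definition of $c(\,\cdot\,,\,\cdot\,)$ produces explicit formulas for $c_i$, $c_i'$, $c_{i+1}'$ in which the sign of $c_i-c_i'$ is governed by $t_B(L_i^B)$ versus $HG(B)$, and the sign of $c_i-c_{i+1}'$ by $t_A(L_i^A)$ versus $HG(A[L_i^A])$.

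Finally I would compare. Since $width(\mathcal{H})$ and $width(\mathcal{H}')$ agree except that the one positive entry $c_i$ of the former is replaced by the two positive entries $c_i',c_{i+1}'$ of the latter, the order on multisets reduces everything to the comparison of $c_i$ with $c_i'$ and $c_{i+1}'$: if both new entries are strictly below $c_i$ then $width(\mathcal{H}')<width(\mathcal{H})$, while if at least one of them is $\ge c_i$ then, as no entry has decreased and a new positive entry has been created, $width(\mathcal{H}')>width(\mathcal{H})$. In case (1) one of the two comparisons from the previous paragraph is strict, which pushes both new entries below $c_i$ and yields $width(\mathcal{H}')<width(\mathcal{H})$; in case (2) both comparisons are equalities, at least one new entry fails to drop below $c_i$, and $width(\mathcal{H}')>width(\mathcal{H})$. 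I expect the main obstacle to be the bookkeeping in the third step: one must treat carefully the degenerate situations where $A[L_i^A]$ becomes disconnected or reducible after surgery, and --- since the two cases of the statement are separated by \emph{equalities} --- one must keep track of the exact additive constants in the formulas for $c_i,\,c_i',\,c_{i+1}'$ (which depend on the conventions for $t_Y(\emptyset)$, for $c(\emptyset,Y')$, on the $\max\{\cdot,0\}$ truncation, and on how the Heegaard genus of $A[L_i^A]$ compares with $t_A(L_i^A)$), rather than just their leading behaviour.
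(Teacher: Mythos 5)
Your outline is the same as the paper's: only the $i$-th entry of the width changes, the two new entries are computed from Lemma \ref{tunnel_of_conn_sum}, and the multiset comparison rules in your last paragraph are correct (replacing $c_i$ by two strictly smaller positive entries decreases the width; keeping an entry equal to $c_i$ while creating an extra positive entry increases it). The genuine gap is exactly the bookkeeping you postponed, and it is not routine. Writing $S$ and $S'$ for the contributions of the components other than $Y_i^*$ to $c_i$ (resp.\ to $c_{i+1}'$), the definition $c(L',Y')=2t_{Y'}(L')+1$ together with Lemma \ref{tunnel_of_conn_sum} gives $c_i-c_i'=2\bigl(t_B(L_i^B)-HG(B)\bigr)+2$ and $c_i-c_{i+1}'=(S-S')+2\bigl(t_A(L_i^A)-HG(A[L_i^A])\bigr)+2$, with $S-S'\geq 0$. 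So the sign of each difference is not ``governed by $t$ versus $HG$'' as you assert: the threshold sits at $HG-1$, and the correct a priori bounds are $t_B(L_i^B)\geq HG(B)-1$ and $t_A(L_i^A)\geq HG(A[L_i^A])-1$ (equality occurs, e.g.\ for the core of a Heegaard solid torus of a lens space, or for a $0$-framed unknot), not the inequalities $t\geq HG$ invoked in the paper.

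Consequently neither case closes the way you sketch. In case (1) the hypothesis is a disjunction, and by your own second paragraph each inequality controls only one of the two new entries; a single strict inequality does not ``push both new entries below $c_i$''. Concretely, if $Y_i^*=S^3$ and $L_i^*$ is the split union of a trefoil ($=L_i^B$) and a $0$-framed unknot ($=L_i^A$), the hypothesis of (1) holds since $t_{S^3}(\text{trefoil})=1>0=HG(S^3)$, yet $c_i=5$ is replaced by $c_i'=1$ and $c_{i+1}'=2t_{S^1\times S^2}(\text{trefoil})+1=5$, so by your own comparison rule the width strictly increases. In case (2) the hypotheses give, with the formulas above, $c_i'=c_i-2$ and $c_{i+1}'\leq c_i-2$: both new entries drop, which is the opposite of what you need. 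The paper's own argument reaches the stated conclusions only by quoting $c(Y_i^*,L_i^*)=2t_A(L_i^A)+2t_B(L_i^B)+1$ (which drops the $+1$ of Lemma \ref{tunnel_of_conn_sum}) and the inequalities $t_B(L_i^B)\geq HG(B)$, $t_A(L_i^A)\geq HG(A[L_i^A])$; if you carry out your plan with the exact constants you will find the thresholds, and hence the case division, shifted by one, so the argument cannot be completed as written without re-examining where the statement draws the line between the two cases.
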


\begin{proof} 
In the new handle decomposition $\mathcal{H}'$, the $i^{th}$ level $Y_i$ got replaced by two levels: $Y^A=Z_{i-1}[E_{i-1}\cup C_i]$ and $Y^B=Y^A[D_i-D_i^B]$. Thus $width(\mathcal{H}')=width(\mathcal{H})\cup \{c^A, c^B\} -\{c_i\}$ where $c^A=c(Y^A, D_i-D_i^B)$ and $c^B=c(Y^B,D_i^B)$. 
We will show that $c^A=c_i +2\left(HG(B)-t_B(L_i^B)\right)$ and $c^B=c_i+2\left(HG(A[L_i^A])-t_A(L_i^A)\right)$. The result follows since \[t_B(L_i^B)\geq HG(B) \text{ and } t_A(L_i^A)\geq HG(A[L_i^A]).\]
By Part 1 of Lemma \ref{tunnel_of_conn_sum}, 
\[c(Y_i^*,L_i^*)=2t_{A}(L_i^A)+2t_{B}(L_i^B) +1\] 
Using the Part 2 of Lemma \ref{tunnel_of_conn_sum} and the equality $Y^A=Y_i$ we get, 
\begin{align*} 
c^A=& \underset{Y'\subset Y_i-Y_i^*}{\sum} c(Y',L') + c(Y_i^*,L_i^A)\\
=& \underset{Y'\neq Y_i^*}{\sum} c(Y',L') + 2t_{Y_i^*}(L_i^A)+1\\ 
=& \underset{Y'\neq Y_i^*}{\sum} c(Y',L') + 2t_{A\#B}(L_i^A)+1\\
=& \underset{Y'\neq Y_i^*}{\sum} c(Y',L') + 2t_{A}(L_i^A)+2HG(B)+1\\
=& \underset{Y'\neq Y_i^*}{\sum} c(Y',L') + c(Y_i^*,L_i^*)+2HG(B)-2t_{B}(L_i^B)+1\\
=& c_i +2\left(HG(B)-t_B(L_i^B)\right)
\end{align*}
Similarly, since $Y^B=Y_i-Y_i^*$ and $Y_i^*[L_i^A]\subset Y^B$, Part 2 of Lemma \ref{tunnel_of_conn_sum} gives us 
\begin{align*} 
c^A=& \underset{Y'\subset Y_i-Y_i^*[L_i^A]}{\sum} c(Y',L') + c(Y_i^*[L_i^A],L_i^B)\\
=& \underset{Y'\neq Y_i^*[L_i^A]}{\sum} c(Y',L') + 2t_{Y_i^*[L_i^A]}(L_i^B)+1\\
=& \underset{Y'\neq Y_i^*[L_i^A]}{\sum} c(Y',L') + 2t_{A[L_i^A]\#B}(L_i^B)+1\\
=& \underset{Y'\neq Y_i^*[L_i^A]}{\sum} c(Y',L') + 2t_{B}(L_i^B)+2HG(A[L_i^A])+1\\
=& c(Y_i^*,L_i^*) + 2HG(A[L_i^A])-2t_{A}(L_i^A)+1\\
\end{align*}
\end{proof} 

The following lemma states that the width of a decomposition is invariant under certain types of sliding. More precisely, if we slide handles on latter levels along previous handles, the width does not change. It follows that if a level $C_i\cup D_i$ is a $1/2-$pair of cancelling handles, we can remove such handles from the decomposition, decreasing the width. 

\begin{lem}
Let $\mathcal{H}=b_0\cup C_1\cup D_1\cup E_1\cup \dots \cup E_N\cup b_4$ be a handle decomposition of $M$. For fixed $1\leq i <j\leq N$, take connected components $A\subset L_j$ and $B\subset J_i\cup L_i$ and let $\mathcal{H}'$ be a decomposition obtained by sliding $A$ along $B$. Then $D$ and $\mathcal{H}'$ have the same width. 
\end{lem}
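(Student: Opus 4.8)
The plan is to show that the handle slide leaves every level $3$-manifold $Y_k$ and every attaching link $L_k$ unchanged up to diffeomorphism, so that in particular each complexity $c_k$ is unchanged. First I would fix notation: the slide of $A\subset L_j$ over $B\subset J_i\cup L_i$ (here $J_i$ should be read as the belt/core data of the level-$i$ handles) is performed by an isotopy supported in a neighborhood of an arc $\gamma$ connecting $A$ to a parallel push-off of $B$. Since $i<j$, the handle $B$ has already been attached by the time we reach level $j$, so the slide is a legitimate move on the attaching link $L_j\subset Y_j$ obtained by pushing $A$ across the belt sphere of $B$ (if $B\subset J_i$ is a $1$-handle) or across a Seifert-framed pushoff of $B$ (if $B\subset L_i$ is a $2$-handle). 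In either case the slide is realized by an ambient isotopy of $Y_j$, hence $Y_j[L_j]$, $Y_j[L_j\cup C_{j+1}]$, and all subsequent level manifolds are unchanged. Crucially, for $k<j$ the data $(Y_k,L_k)$ is untouched, and for $k>j$ the manifold $Y_k$ is the same manifold (the slide changes the framed link but not the surgered manifold, and does not change the diffeomorphism type of the complement relevant to $t_{Y_k}$); moreover $L_k$ for $k>j$ is carried along by the isotopy, so $t_{Y_k}(L_k)$ and $HG(Y_k)$ are preserved.

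The one level that genuinely changes is $k=j$: the attaching link $L_j\subset Y_j$ is replaced by its image $L_j'$ under the slide. Here I would argue that sliding does not change the pair $(Y_j,L_j)$ up to the homeomorphism that computes $c_j$. Indeed, a handle slide of one component of a link over a push-off of another component (or across a $1$-handle's belt sphere) is realized by an ambient isotopy of the ambient $3$-manifold $Y_j$ itself — it is the standard fact that sliding a knot over a $0$-framed parallel copy of another knot, or over a $1$-handle, is an isotopy of the complement $Y_j\smallsetminus\eta(B)$ rel boundary, extended by the identity. Therefore there is a self-diffeomorphism of $Y_j$ carrying $L_j$ to $L_j'$, so $t_{Y_j}(L_j')=t_{Y_j}(L_j)$ (or, in the case $L_j=\emptyset$, nothing to check), and hence $c_j$ is unchanged. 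Combining with the previous paragraph, all $c_k$ agree, so $\mathcal H$ and $\mathcal H'$ have the same width.

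I would then record the promised consequence: if the level $C_i\cup D_i$ consists of a single $1$-handle geometrically cancelling a single $2$-handle, one first uses the slides above to make the attaching circle of $D_i$ meet the belt sphere of the $1$-handle exactly once with no other interaction, then cancels the pair; this produces a decomposition of $M$ with one fewer level, hence with width equal to the old width minus the entry $c_i\ge 1$, which is strictly smaller.

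The main obstacle I expect is the bookkeeping in the case $B\subset L_i$ is a $2$-handle rather than a $1$-handle: one must check that sliding $A$ over the attaching circle of $B$ with the handle framing (not the Seifert framing) is still realized by an ambient isotopy of $Y_j$ — this is true because in $Y_j$ the handle $B$ has already been surgered in, so its attaching circle bounds in $Y_j$ and sliding over it is isotopy in $Y_j$ — but stating it cleanly requires being careful about what "sliding over $B$" means at the level of $Y_j$ versus at the level of the Kirby diagram. A secondary point requiring care is verifying that the Heegaard genus terms $HG(Y')$ for empty-link components, which enter $c_k$ when some piece of $L_k$ is empty, are unaffected; but since the $Y_k$ are literally the same manifolds, this is immediate once the isotopy claim is in place.
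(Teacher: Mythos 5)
Your argument is correct and follows essentially the same route as the paper: because $B$ is attached at an earlier level, sliding $A$ over it is realized by an isotopy of the attaching circles of $A$ in $Y_j$, so each pair $(Y_k,L_k)$ is unchanged up to isotopy/diffeomorphism and every complexity $c_k$, hence the width, is preserved. The only slip — the early reference to a ``Seifert-framed'' pushoff of $B$, which should be the handle-framed pushoff (bounding a meridian disk of the surgered-in solid torus in $Y_j$) — is one you correct yourself in your final paragraph, so nothing essential is missing.
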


\begin{proof}
In terms of the attaching regions of the handles, sliding $A$ along $B$ corresponds to an isotopy of the attaching circles of $A$ in $Y_j$, which does not affect the values of $c_l$. Hence the width does not change. 
\end{proof}

\begin{lem}\label{sliding_width}
If $\mathcal{H}$ is a thin position of $M$, then $C_i\cup D_i$ does not contain a pair of cancelling 1-handle and 2-handle say, $\alpha\cup \beta$ where $\alpha$ is the attaching region of a handle of $C_1$ and $\beta\subset L_i$, so that $\alpha$ is unlinked with $L_i-\{\beta\}$ and $\beta$ goes through $\alpha$ geometrically once. \\
Similarly, $D_i\cup E_i$ does not contain a pair of cancelling 2-handle and 3-handle $\beta^2\cup \gamma^3$ where the attaching region of $\gamma^3$ is disjoint from $L_i-\{\beta\}$. 
\end{lem}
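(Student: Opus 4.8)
The statement is really two independent claims, one about $1/2$-cancelling pairs and one about $2/3$-cancelling pairs; by the "up-side-down" symmetry of handle decompositions (Lemma \ref{lem_op}, referenced in the excerpt) the second follows from the first applied to the dual decomposition, so I would only prove the $1/2$-case in detail. The idea is that a cancelling pair $\alpha \cup \beta$ as described can be removed from the diagram without changing $M$, and removing it strictly decreases the width in the sense of the multiset order. So I must (i) verify that erasing $\alpha$ and $\beta$ yields a valid handle decomposition of the same manifold, and (ii) compare the two widths.

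\textbf{Step 1: the cancellation is geometric.} Because $\beta$ passes through the $1$-handle $\alpha$ geometrically once and is unlinked from $L_i - \{\beta\}$, the pair $\alpha \cup \beta$ is a genuine cancelling $1/2$-handle pair in the sense of Gompf--Stipsicz (Chapter 5 of \cite{gompf_stip}): after an isotopy supported away from the other handles one may slide any handle that links $\alpha$ off of it (the hypotheses guarantee no later-added handle links $\alpha$ either, since only $L_i$ sits at level $Y_i$ and $\beta$ is the only component of $L_i$ meeting $\alpha$), and then $\alpha$ and $\beta$ cancel. Let $\mathcal{H}'$ be the resulting decomposition: $C_i$ loses one $1$-handle, $D_i$ loses the $2$-handle $\beta$, and all other handle collections are unchanged. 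Then $M$ is unchanged and the level $3$-manifolds $Y_\ell$ for $\ell < i$ are literally the same.

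\textbf{Step 2: comparing widths.} For $\ell \neq i$ the attaching data of $D_\ell$ is unaffected by the cancellation (handles at later levels were slid only in ways that are isotopies of their attaching regions, which do not change the $c_\ell$ by the preceding sliding lemma), so $c_\ell(\mathcal{H}') = c_\ell(\mathcal{H})$ for all $\ell \neq i$. It remains to check $c_i(\mathcal{H}') < c_i(\mathcal{H})$ — or, if $C_i\cup D_i$ consisted of exactly this one cancelling pair, that the $i$-th level disappears entirely, which lowers the width outright. For the generic case, on the component $Y_i^* \subset Y_i$ containing $\beta$, passing from $\mathcal{H}$ to $\mathcal{H}'$ replaces $(Y_i^*, L_i^*)$ by $(\wt{Y}_i^*, \wt{L}_i^*)$ where $\wt{Y}_i^*$ is $Y_i^*$ after un-doing the $1$-handle surgery (equivalently surgering along the belt sphere of $\alpha$) and $\wt{L}_i^* = L_i^* - \{\beta\}$; since $\beta$ meets $\alpha$ once and is unlinked from the rest, a Heegaard surface that realizes $g(Y_i^*, L_i^*)$ can be pushed across the cancelling pair to give a Heegaard surface for $\wt Y_i^*$ containing $\wt L_i^*$ in its core of genus one less, so $t_{\wt Y_i^*}(\wt L_i^*) \le t_{Y_i^*}(L_i^*) - 1$ (and similarly for the $HG$ terms when $\wt L_i^*=\emptyset$). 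Hence $c_i(\mathcal{H}') \le c_i(\mathcal{H}) - 2 < c_i(\mathcal{H})$, so $width(\mathcal{H}') < width(\mathcal{H})$, contradicting thinness of $\mathcal{H}$.

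\textbf{Main obstacle.} The delicate point is Step 2's genus bound: one must argue carefully that the cancelling pair lets one simplify a \emph{tunnel system} (not just any Heegaard surface) by exactly one handle, i.e. that $t_{\wt Y_i^*}(\wt L_i^*)$ really drops. The cleanest route is to observe that $\eta(\alpha)$ meets $\eta(\beta \cup t)$, for a minimal tunnel system $t$ of $L_i^*$, in a single $3$-ball (because $\beta$ runs over $\alpha$ once and $t$ can be isotoped off $\alpha$), so the cocore disk of $\alpha$ is a compressing disk for the genus-$t_{Y_i^*}(L_i^*)+1$ Heegaard surface on the handlebody side that, when compressed, still leaves $\wt L_i^*$ inside the core; this exhibits the required tunnel system of size $t_{Y_i^*}(L_i^*)-1$ for $\wt L_i^*$ in $\wt Y_i^*$. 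The bookkeeping for the edge cases ($L_i^* = \{\beta\}$ so $\wt L_i^* = \emptyset$, and $C_i \cup D_i = \alpha \cup \beta$ so the level vanishes) is routine but must be stated.
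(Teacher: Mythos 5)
Your proposal takes essentially the same route as the paper: assume such a pair exists, slide the $2$-handles that run through $\alpha$ over $\beta$ (these all lie in $L_k$ with $k>i$, so the sliding lemma immediately preceding this one says the width is unchanged), erase the cancelling pair to obtain a decomposition of strictly smaller width, and deduce the $2/3$-statement by turning the decomposition up-side-down and using $width(\mathcal{H}^{op})=width(\mathcal{H})$ (Lemma \ref{lem_op}).

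Two caveats. First, your parenthetical in Step 1 claiming that the hypotheses guarantee ``no later-added handle links $\alpha$'' is false, and it quietly contradicts what you do in Step 2: handles of $L_k$ with $k>i$ may very well run through $\alpha$ --- the hypotheses only control $L_i-\{\beta\}$ --- and this is precisely why the slides over $\beta$, and hence the width-invariance of such slides, are needed; the paper states this explicitly. Second, the strict decrease of $c_i$, which you rightly single out as the main obstacle, is asserted in the paper without further argument (``erase the pair, reducing the width''), so your Step 2 is actually more detailed than the published proof; however, your ``cleanest route'' hinges on the unproved claim that a minimal tunnel system $t$ for $L_i^*$ can be isotoped off $\alpha$. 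A minimal system of tunnels may a priori run through the $1$-handle (equivalently, wrap around the $S^1\times S^2$ summand that $\alpha$ creates in $Y_i$), so this step requires a genuine argument --- for instance an innermost-circle, Haken-type argument applied to the belt sphere of $\alpha$, which meets $L_i$ in the single point where it crosses $\beta$, in the spirit of the proof of Lemma \ref{tunnel_of_conn_sum}; note also that $(Y_i^*,L_i^*)$ need not split as a connected sum with $(S^1\times S^2,\mathrm{core})$, since $\beta$ may link $L_i-\{\beta\}$, so one cannot simply quote that lemma. With that point supplied (and the edge cases you list), your argument is complete and coincides with the paper's.
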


\begin{proof}
We prove the contrapositive. Suppose that there is a 1/2-cancelling pair $\alpha\cup \beta$, $\beta \in L_i$ such that $\alpha$ is disjoint from $L_i -\{\beta\}$. 
In order to erase the pair $(\alpha,\beta)$ from the diagram, we need to slide along $\beta$ the other $2-$handles of $\mathcal{H}$ intersecting $\alpha$. By assumption such $2-$handles can only be part of $L_k$ ($k>i$), so we can slide them along $\beta$ without changing $width(\mathcal{H})$ and then erase the pair $(\alpha,\beta)$, reducing the width. 
The case for 2/3-cancelling pairs is similar. It also follows from the fact that $width(\mathcal{H}^{op})=width(\mathcal{H})$ (see Lemma \ref{lem_op}), where $\mathcal{H}^{op}$ is the up-side-down handle decomposition of $\mathcal{H}$.
\end{proof}
%
\subsection{Manifolds with small width}
By taking the simplest handle decompositions of $S^1\times S^3$ and $\pm CP(2)$, we see that these manifolds have width equal to $\{1\}$. We will now show that no other $4-$manifold has such width. We also study the equation \[width(M)=\{ 1,1,\dots, 1\}.\] 
For a handle decomposition $\mathcal{H}=b_0\cup C_1\cup D_1\cup E_1\cup \dots \cup E_N\cup b_4$, denote by 
\[Y_i=\partial \left( b_0\cup C_1\cup D_1\cup E_1\cup \dots C_i\right)\]
\[Z_i=Y_i[D_i]=\partial \left(b_0\cup C_1\cup D_1\cup D_1\cup \dots \cup D_i\right)\]

\begin{prop}\label{width_1}
Let $M$ be a closed 4-manifold satisfying $width(M)=\{1\}$. Then $M$ is diffeomorphic to either $S^1\times S^3$ or $\pm CP(2)$.
\end{prop}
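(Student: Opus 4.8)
The plan is to start from a handle decomposition $\mathcal{H}=b_0\cup C_1\cup D_1\cup E_1\cup\dots\cup E_N\cup b_4$ realizing $width(M)=\{1\}$ and to simplify it, using the moves of this subsection, until it becomes one of the standard decompositions of $S^1\times S^3$, $CP(2)$, $\overline{CP(2)}$. Since the sorted width multiset must equal $\{1,0,0,\dots\}$, exactly one level has complexity $1$ and every other level has complexity $0$. A complexity-$0$ level has every component of $Y_i$ equal to $S^3$, hence $D_i=\emptyset$; combining this with Proposition~\ref{props_of_thin}(2)--(3) and the observation that a $3$-handle splitting off an $S^3$ that is later capped by a $4$-handle forms a cancelling $3/4$ pair, one checks that all complexity-$0$ levels and all $0/1$- and $3/4$-cancelling pairs can be removed without changing the width. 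So we may assume $N=1$, $\mathcal{H}=b_0\cup C_1\cup D_1\cup E_1\cup b_4$ with $c_1=1$, $b_0$ a single $0$-handle, and $Y_1=\partial(b_0\cup C_1)\cong\#^m(S^1\times S^2)$ for some $m\ge 0$.

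Now there are two cases. If $D_1=\emptyset$, then $c_1=\max\{2HG(Y_1)-1,0\}=1$ forces $HG(Y_1)=1$, so $m=1$ and $b_0\cup C_1\cong S^1\times B^3$; after removing cancelling pairs, $E_1$ consists of a single $3$-handle attached along a non-separating $2$-sphere of $\partial(S^1\times B^3)=S^1\times S^2$ and $b_4$ of a single $4$-handle, whence $M\cong S^1\times S^3$. If $D_1\neq\emptyset$, then $c_1=1$ forces the attaching link $L_1\subset Y_1$ to be non-empty with $c(L_1,Y_1)=2t_{Y_1}(L_1)+1=1$, i.e.\ $t_{Y_1}(L_1)=0$. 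Then $\overline{Y_1-\eta(L_1)}$ is a connected handlebody, so its boundary $\partial\eta(L_1)$ is a single torus: $L_1$ is a single knot $K$ with solid-torus complement, $Y_1=\eta(K)\cup\overline{Y_1-\eta(K)}$ is a genus-$1$ Heegaard splitting, and therefore $Y_1\in\{S^3,\,S^1\times S^2\}$.

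In the sub-case $Y_1=S^3$ we get $C_1=\emptyset$, $b_0\cup C_1=B^4$, and $K$ is an unknot, so $b_0\cup D_1$ is the disk bundle over $S^2$ of Euler number $n$, with boundary the lens space $L(n,1)$. The remaining $3$- and $4$-handles must turn this boundary into a disjoint union of $3$-spheres. This is impossible when $|n|\ge 2$, since $L(n,1)$ is irreducible and $3$-handle surgeries on ball-bounding spheres only split off extra $S^3$ summands; and when $n=0$ the closure is forced (after reduction) to be $S^4$, contradicting $width(M)=\{1\}$. Hence $n=\pm1$, $E_1=\emptyset$, $b_4$ is a single $4$-handle, and $M\cong\pm CP(2)$. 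In the sub-case $Y_1=S^1\times S^2$, the knot $K$ is (up to isotopy) a core of the unique genus-$1$ Heegaard splitting of $S^1\times S^2$, i.e.\ a curve running once through the single $1$-handle, so the $2$-handle $D_1$ cancels that $1$-handle: $b_0\cup C_1\cup D_1\cong B^4$ and (after reduction) $M\cong S^4$, again contradicting $width(M)=\{1\}$; so this sub-case does not arise. Putting the cases together proves the proposition. (Alternatively, once the decomposition is reduced to $N=1$ one may recognize $\mathcal{H}$ as a genus-$1$ trisection, cf.\ Subsection~\ref{rel_handlebodies_trisections}, and invoke the classification of genus-$1$ trisections.)

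I expect the main obstacle to be the case $D_1\neq\emptyset$. Two points need care there: first, converting the single numerical identity $t_{Y_1}(L_1)=0$ into the geometric statement that $L_1$ is one unknotted knot and $Y_1$ has Heegaard genus at most $1$ (this uses that handlebodies are connected and the low-genus classification of $\#^m(S^1\times S^2)$); second, ruling out every closure except the two standard ones, which relies on the irreducibility of the lens spaces $L(n,1)$ with $|n|\ge 2$ — so that no sequence of sphere surgeries/$3$-handles can reach a union of $3$-spheres — and on the fact that any further collapse of the decomposition to $b_0\cup b_4$ would yield $width=\emptyset\neq\{1\}$. Consequently the bookkeeping in the reduction to $N=1$, eliminating complexity-$0$ levels and cancelling $0/1$- and $3/4$-pairs without disturbing the width, must be carried out carefully enough to license these final steps.
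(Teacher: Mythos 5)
Your proposal is correct and follows essentially the same route as the paper: reduce to a single-level decomposition with one $0$-handle, split into the cases $D_1=\emptyset$ and $D_1\neq\emptyset$, use $t_{Y_1}(L_1)=0$ together with uniqueness of genus-one Heegaard splittings to force either an unknot in $S^3$ (giving $\pm CP(2)$, with $n=0$ excluded since $S^4$ has empty width) or a cancelling $1/2$-pair (giving $S^4$, a contradiction). The only cosmetic difference is that you rule out $|n|\ge 2$ via irreducibility of $L(n,1)$, where the paper notes directly that closedness forces $Y_1[L_1]$ to be a connected sum of copies of $S^1\times S^2$.
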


\begin{proof} 
Let $M=b_0\cup C_1 \cup D_1\cup E_1\cup b_4$ be a decomposition for $M$ of width equal to $\{1\}$, we can assume $|b_0|=1$. Suppose first $D_1=\emptyset$, then $C_1$ is not empty and the Heegaard genus of $Y_1=\#_{|C_1|}S^1\times S^2$ is $1$ since $c_1=1$. Thus $|C_1|=1$ and $M$ admits a handle decomposition with only one $1-$handle, i.e. $M\approx S^1\times S^3$. 

Suppose now that $D_1\neq \emptyset$. We have that $1=c_1=2t_1+1=2g_1-1$, so $t_1=0$ and $HG(Y_1)\leq g_1=1$. It follows that $|C_1|\leq 1$, giving us two options. If $|C_1|=0$, then $Y_1=S^3$ and $L_1$ is an unknot with framing say $n\in \Z$. Recall that $M$ is closed so $Y_1[L_1]$ must be a connected sum of $S^1\times S^2$. This forces $n$ to be $0$ or $\pm 1$, which implies that $M$ is either $S^4$ or $\pm CP(2)$. 

Assume the second case: $D_1\neq \emptyset$ and $|C_1|=1$. Since $t_1=0$, a neighborhood of $L_1$ induces a genus one Heegaard splitting of $S^1\times S^2$. Uniqueness of such splittings \cite{Otal_lens} implies that $L_1$ intersects $C_1$ geometrically once and that $C_1\cup D_1$ is a $1/2-$cancelling pair for $M$. In particular we conclude $M\approx S^4$.
\end{proof}


\begin{prop} \label{case11}
Let $M$ be a connected 4-manifold and let $\mathcal{H}$ be a thin position of $M$ satisfying $width(\mathcal{H})=\{1,\dots, 1\}$. Suppose that the level 3-manifolds $Y_i\left[D_i\cup E_i\right]$ are connected for all $i$. 
Then $M$ is diffeomorphic to a (boundary) connected sum of copies of $S^1\times S^3$, $S^1\times B^3$, and linear plumbings of disk bundles over the sphere. 
\end{prop}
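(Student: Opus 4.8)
The plan is to induct on the number of nontrivial levels $N$, at each stage peeling off one level and recognizing it as a connect-sum (or boundary-connect-sum) factor of the stated type. First I would record what $c_i=1$ forces at each level. Since each level contributes a single $1$, the link $L_i^*$ inside the (unique, by hypothesis) relevant component $Y_i^*$ either has $t(L_i^*,Y_i^*)=0$ — so $L_i^*$ lies on a genus-one Heegaard surface of $Y_i^*$ as a core curve, forcing $HG(Y_i^*)\le 1$ — or is empty with $HG(Y_i^*)=1$; in the latter case Proposition~\ref{props_of_thin}(2),(3) and the thinness hypothesis pin down where such an empty level can sit. Combined with the observation that, since $Y_{i-1}[D_{i-1}\cup E_{i-1}]$ is connected, the only way to produce several components and re-glue them is through $1$-handles, this will show each $Y_i^*$ is either $S^3$ or $S^1\times S^2$, and hence $L_i^*$ is either an unknot (with some framing $n\in\Z$) or a core/meridian-type curve in $S^1\times S^2$ sitting on a genus-one surface.

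Next I would run the classification of the levels. If $Y_i^*=S^3$ and $L_i^*$ is an unknot with framing $n$: framing $0$ or $\pm1$ gives a cancelling $2/3$- or $1/2$-pair (contradicting thinness via Lemma~\ref{sliding_width}), while framing $n$ with $|n|\ge 2$ glues in a disk bundle over $S^2$ of Euler number $n$ — this is the plumbing factor. If $Y_i^*=S^1\times S^2$ and $L_i^*$ is the relevant genus-one curve: by uniqueness of the genus-one splitting of $S^1\times S^2$ (Otal, as used in Proposition~\ref{width_1}) the $1$-handle that created the $S^1$ factor together with the $2$-handle on $L_i^*$ is either a $1/2$-cancelling pair — excluded by thinness — or the $2$-handle is attached to the $S^1\times\{pt\}$ core, which (with framing) again produces an $S^2$-bundle summand, i.e.\ $S^1\times S^3$ or the twisted bundle, together with a plumbing. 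The empty levels ($D_i=E_i=\emptyset$), by Proposition~\ref{props_of_thin}, occur only at $i=N$ and correspond to a final $1$-handle contributing an $S^1\times S^3$ (closed case) or $S^1\times B^3$ (boundary case) summand. The linearity of the plumbing graph comes from the fact that at each step only the most recent $2$-handle's boundary component is being operated on, so consecutive spheres are plumbed in a chain rather than branching.

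To make the induction go through cleanly I would, at the inductive step, use Proposition~\ref{props_of_thin}(1): pick the level where a copy of $S^3$ appears among $Z_i$, $Z_i[E_i]$, $Z_i[E_i\cup C_{i+1}]$ — such a level exists because every $Y_i^*$ has Heegaard genus at most one and each $2$-handle is attached along a core curve — cut $M$ there along the $S^3$, and observe that Proposition~\ref{props_of_thin}(1) guarantees neither side is a spurious $B^4$, so the width splits as a disjoint union over the two pieces, each again of the form $\{1,\dots,1\}$ with connected intermediate level manifolds, and each in thin position. Then apply the inductive hypothesis to both pieces and identify the single removed level by the local analysis above; Proposition~\ref{width_union} is not needed for the upper direction here since we are decomposing rather than composing. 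The base case $N\le 1$ is Proposition~\ref{width_1} together with its obvious relative analogue.

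The main obstacle I expect is the bookkeeping when $Y_i$ is disconnected: the hypothesis only says $Y_i[D_i\cup E_i]$ is connected, so $Y_i$ itself may have several components, and I must argue that the "extra" components are all $S^3$'s that get capped off immediately and contribute nothing — equivalently, that all the genuine topology is carried by the distinguished component $Y_i^*$. This requires care in showing that an $S^3$-component splitting can always be pushed to an earlier $S^3$-level so that Proposition~\ref{props_of_thin}(1) applies, and in checking that merging or reordering the handles that live on trivial $S^3$-components never raises the width, so that thinness is preserved under the surgeries used in the induction. Handling the non-orientable twisted bundle $Y_{g,n}$ and the non-orientable $S^1\tilde\times B^3$ correctly (the paper allows non-orientable $M$ in this theorem) is a secondary subtlety: one must allow the genus-one Heegaard surface of a twisted $S^2$-bundle over $S^1$ and track the framing parity there.
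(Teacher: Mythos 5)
Your reduction of the problem (using $c_i=1$ to force tunnel number zero, Heegaard genus at most one, and to rule out $1/2$- and $2/3$-cancelling configurations via Lemma~\ref{sliding_width} and Proposition~\ref{props_of_thin}) matches the first part of the paper's argument, but there is a genuine gap at the heart of your plan: you assert that every level $Y_i^*$ is $S^3$ or $S^1\times S^2$, and this is false. Once a $2$-handle is attached along an unknot with framing $n$, $|n|\ge 2$, the next level is the lens space $L(n,1)$, and further levels are lens spaces obtained by surgery along a chain. Because of this, your inductive step --- cut $M$ along an $S^3$ appearing among $Z_i$, $Z_i[E_i]$, $Z_i[E_i\cup C_{i+1}]$ --- cannot get started in the main case: for a single linear plumbing with several $2$-handles and $|n_i|\ge 2$, no intermediate level is $S^3$, so there is nothing to cut along and Proposition~\ref{props_of_thin}(1) is not applicable. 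In the paper, connected-sum splittings are only invoked when a $1$- or $3$-handle genuinely appears (which forces $Z_i[E_i]=S^3$ there); after that reduction one is left with a decomposition consisting purely of $2$-handles, each attached along a tunnel-number-zero knot in a lens space, and this block is analyzed directly, not by cutting.

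The second, related gap is that the linearity of the plumbing is exactly the hard point, and you assume it ("only the most recent $2$-handle's boundary component is being operated on") rather than prove it. A priori $L_{i+1}$ is merely a tunnel-number-zero knot in the lens space $Y_{i+1}$; the paper's proof uses Bonahon--Otal uniqueness of genus-one Heegaard splittings of lens spaces to conclude that $L_{i+1}$ is isotopic to the core of one of the two solid tori $V$, $W$ of the splitting (i.e.\ one of the two ends of the chain), then an explicit computation with the gluing matrix $\begin{pmatrix} n_1 & 1\\ 1 & 0\end{pmatrix}$ to isotope $L_{i+1}$ into the complement of the chain as a meridian of the previous component, so that consecutive attaching circles form Hopf links, and finally a product-region ($T^2\times I$) observation to see that the genus-one splitting of the next level is again given by the two end tori, which is what lets the argument iterate. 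Without these steps your proposal does not exclude, for instance, a tunnel-number-zero attaching circle sitting in the lens space in a way that does not extend the chain at an end. (A minor further point: the paper's standing convention is that all manifolds are oriented, so the non-orientable twisted bundles you worry about do not arise here.)
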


\begin{proof} 
By definition of $c(L_i,Y_i)$, the condition $1=c_i$ implies $HG(Y_i)\leq 1$ and $HG(Y_i[D_i])\leq 1$ for all $i$. Since $Y_i[D_i]$ is a lens space and $Z_i[E_i]=Y_i[D_i\cup E_i]$ is connected, we get $|E_i|\leq 1$. Note that $|E_i|=1$ if and only if $Z_i=S^1\times S^2$ and $E_i\cap Z_i=\{pt\}\times S^2$. Thus $Z_i[E_i]=S^3$ and either $i=N$ or $M=R\# S$ where $width(R)$ and $width(S)$ are collections of at most $N-1$ ones. Thus we can assume that $E_i=\emptyset$ $\forall 1\leq i <N$. 

In a similar vein, $HG(Y_i)\leq 1$ implies $|C_i|\leq 1$, and $|C_i|=1$ if and only if $Z_i[E_i]=Z_i=S^3$, giving us a decomposition $M=R\#S$ as above. Thus we may assume that $C_i=\emptyset$ for all $1<i\leq N$. Moreover, if both $C_1$ and $D_1$ are non-empty, then $1=c_1=2t_{Y_1}(L_1)+1$ which forces $t_{Y_1}(L_1)=0$. So $L_1$ is isotopic to $S^1\times \{pt\}\subset S^1\times S^2=Y_1$. Then $C_1\cup D_1$ is a cancelling pair, contradicting the minimality of the width by Lemma \ref{sliding_width}. 
Thus by avoiding the case $M=S^1\times S^3$, we may assume that $C_i=\emptyset$ $\forall 1\leq i\leq N$. 

As before note that $t_{Y_i}(L_i)=0$; in particular, $L_i$ is an knot for all $i$. Since $L_1\subset Y_1=S^3$, $L_1$ is a unknot with framing $n_1\in \mathbb{Z}$. 
$Y_2=Y_1[L_1]=S^3[L_1]$ is a lens space $L(n_1,1)$, with $n_1\neq \pm 1$. Decompose $Y_2$ as $V_1 \cup W_1$ where $V_1=\overline{S^3-\eta(L_1)}$ and $W_1\subset L(n_1,1)$ is a solid torus so that the push-off of $L_1$ with framing $n_1$ bounds a meridian disk in $W_1$. 
Since $c_2=1$, we get $t_{Y_2}(L_2)=0$ and, by uniqueness of genus one Heegaard splittings for lens spaces \cite{Otal_lens}, we conclude that $L_2$ is isotopic (in $Y_2$) to the core of $V_1$ or $W_1$. 

Suppose $L_2$ is isotopic to the core of $W_1$. Let $f:\partial W_1 \ra \partial V_1$ be an orientation reversing homeomorphism mapping the meridian of $\partial W_1$, say $m$, to the curve with framing $n_1$ on $L_1$ in $\partial V_1$. Let $\mu, \lambda\subset \partial V_1$ be the meridian and preferred longitude of $\partial V_1$ induced by $L_1$, $\lambda$ is the boundary of the meridian of $\eta(L_1)$. By construction, $f(m)=\mu + n_1 \lambda$. We can take $f$ and $l\subset \partial W_1$ a longitude of $\partial W_1$ so that $f_* :H_1(\partial W_1) \ra H_1(\partial V_1)$ in the ordered basis $\{\lambda, \mu\}$, $\{ m,l\}$ is given by the matrix 
$\begin{pmatrix}
   n_1       & 1  \\
   1     & 0
\end{pmatrix}$.
We make such choice since the map $f$ is determined up to isotopy by $m \mapsto \mu + n_1 \lambda$. In particular, $L_2\subset W_1$ is isotopic in $W_1$ to $\lambda$ and so it can be pushed into $V_1$, becoming parallel to $\mu$. Hence we can assume that $L_2$ is isotopic to the core of $V_1$. 
Then $L_1\cup L_2$ is a Hopf link with framings say $(n_1,n_2)$.

For $i=3$, $\partial \eta(L_3) \subset Y_3 = S^3[L_1\cup L_{2}]$ is a genus one Heegaard surface for $Y_3$. 
We can decompose the lens space $Y_3$ as the union $Y_3 = W_1 \cup \overline{S^3-\eta(L_1\cup L_2)} \cup W_2$, where $W_i$ is the solid torus such that the push-off of $L_i$ with framing $n_i$ bounds a meridian disk in $W_i$ and $F_i=\partial \eta (L_i)$. $L_3\subset Y_3$ has tunnel number one so uniqueness of genus one Heegaard splittings for lens spaces \cite{Otal_lens} forces $L_3$ to be isotopic in $Y_3$ to the core of either $W_1$ or $W_2$. Without loss of generality, $L_3\subset W_2$. Using an argument analogous to that given above, we can pick an adequate attaching map $f:\partial W_2 \ra F_2 =\partial \eta L_2$ so that we can isotope $L_3$ out of $W_2$ so that $L_3$ is parallel to a meridian of $\eta(L_2)$ in $\overline{S^3-\eta(L_1\cup L_2)}$. 
Let $W_3\subset Y_4$ be the solid torus with meridian disk a curve in $\partial \eta (L_3)$ and framing $n_3$. If we sit $L_3$ in the core of $W_2$, it is clear that $W_1$ and $W_3$ induce a Heegaard splitting for $Y_4$ (note that $\overline{Y_3-(W_1\cup \eta (L_3))}$ is a product region $T^2 \times I$). 

We have shown that we can perturb $\mathcal{H}$ (without changing the width) so that $L_1\cup L_2 \cup L_3$ are isotopic to the link in Figure \ref{plumbings}. Also, notice that if $W_i$ ($i=1,2,3$) is the solid torus apearing in $Y_4$ after surgering $S^3$ along $L_1\cup L_2 \cup L_3$, then $\overline{Y_4 -(W_1\cup W_3)}$ is a product region. We can proceed inductively, repeating the argument above to conclude that $\mathcal{H}$ is the Kirby diagram of a ``linear" plumbing of disk bundles over spheres. 
\begin{figure}[h]
\centering
\includegraphics[scale=.1]{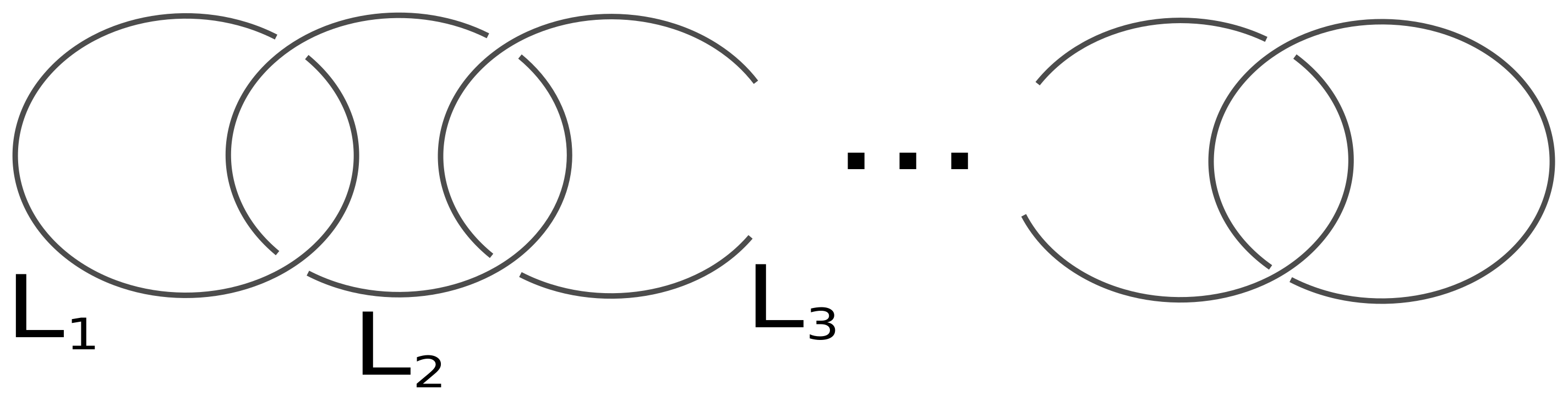}
\caption{Kirby diagram of a linear plumbing of disk bundles over a sphere.}
\label{plumbings}
\end{figure}
\end{proof} 

\begin{cor} 
For closed prime 4-manifolds, $width(M)=\{1,1\}$ if and only if $M\approx S^2\times S^2$.
\end{cor}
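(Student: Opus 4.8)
The plan is to prove the two implications separately, leaning on Theorem \ref{width_1111} for the classification of the width-$\{1,\dots,1\}$ manifolds, on Proposition \ref{width_1} to pin down the manifolds of width $\{1\}$, and on the structural reductions carried out inside the proof of Proposition \ref{case11}.

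For the ``if'' direction I would take the standard Kirby diagram of $S^2\times S^2$: one $0$-handle, two $2$-handles attached along a Hopf link with both framings equal to $0$, and one $4$-handle, read as the decomposition $b_0\cup D_1\cup D_2\cup b_4$ with all $C_i,E_i=\emptyset$ and $N=2$. The first level lies in $Y_1=S^3$ with $L_1$ an unknot, so $c_1=2t_{S^3}(L_1)+1=1$; after attaching $D_1$ one has $Y_2=S^1\times S^2$ and $L_2$ becomes isotopic to $S^1\times\{pt\}$, whose complement in $Y_2$ is a solid torus, so $t_{Y_2}(L_2)=0$ and $c_2=1$. Hence $width(S^2\times S^2)\leq\{1,1\}$. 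A width strictly below $\{1,1\}$ in the given order must be $\emptyset$ or $\{1\}$; the former gives $M\approx S^4$ (since $width(S^4)=\emptyset$), and the latter forces $M\approx S^1\times S^3$ or $M\approx\pm CP(2)$ by Proposition \ref{width_1}. Since $S^2\times S^2$ is none of these, $width(S^2\times S^2)=\{1,1\}$.

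For the ``only if'' direction, let $M$ be closed and prime with $width(M)=\{1,1\}$. By Theorem \ref{width_1111}, $M$ is a (boundary) connected sum of copies of $S^1\times S^3$, $S^1\times B^3$, and linear plumbings of disk bundles over $S^2$. Being closed, $M$ has no $S^1\times B^3$ summand; being prime, $M$ is $S^4$, a single copy of $S^1\times S^3$, or a single closed linear plumbing. The first two are ruled out because their widths, $\emptyset$ and $\{1\}$, do not have two elements, so $M$ is a prime closed linear plumbing of disk bundles over $S^2$. Now I would fix a thin position of $M$ and run the reductions from the proof of Proposition \ref{case11}: we may assume the decomposition has no $1$-handles (so $\pi_1(M)=1$), each level $D_i$ is a single knot $L_i$ with $t_{Y_i}(L_i)=0$, and the $L_i$ form a Hopf-linked chain $L_1\cup\cdots\cup L_k$ with framings $n_1,\dots,n_k$; since $width(M)=\{1,1\}$ and every $c_i$ equals $1$, there are exactly two levels, $k=2$. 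If this chain is closed up by a $3$-handle and a $4$-handle, then $\partial(L_1\cup L_2)=S^1\times S^2$, which forces $n_1n_2=1$, and a blow-down identifies $M\approx\pm CP(2)$, contradicting $width(M)=\{1,1\}$. Otherwise the chain is closed up by a single $4$-handle, so $M$ is closed and simply connected with intersection form the unimodular matrix $\left(\begin{smallmatrix} n_1 & 1\\ 1 & n_2\end{smallmatrix}\right)$. If some $n_i$ is odd this form is odd, hence diagonalizes over $\Z$, so (the diagram being diffeomorphically standard) $M$ is a nontrivial connected sum and therefore not prime; if both $n_i$ are even, unimodularity forces $n_1n_2=0$, the form is hyperbolic, and since $M$ is then an $S^2$-bundle over $S^2$ of even Euler number it is diffeomorphic to $S^2\times S^2$. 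Thus $M\approx S^2\times S^2$, completing both directions.

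The step I expect to be the main obstacle is this last one: correctly enumerating the closed $4$-manifolds that occur as linear plumbings of disk bundles over spheres once the capping $3$- and $4$-handles are taken into account, and checking that the only prime one whose width has exactly two levels is $S^2\times S^2$. Concretely this is a short bookkeeping of unimodular forms of rank at most $2$ realizable by length-two Hopf chains, together with the observation that the associated Kirby diagrams are standard; the points requiring the most care are the separate disposal of a chain closed up with a $3$-handle (which collapses $M$ to $\pm CP(2)$, hence with the wrong width) and the appeal to primeness to discard the connected-sum possibilities such as $CP(2)\# CP(2)$ and $CP(2)\#\overline{CP(2)}$.
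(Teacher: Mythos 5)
Your argument is correct and follows exactly the route the paper intends: the corollary is stated there without proof as an immediate consequence of Proposition \ref{width_1}, Proposition \ref{case11}/Theorem \ref{width_1111}, and a short Kirby-calculus enumeration of length-two Hopf chains, which is precisely what you carry out. Your filling in of the bookkeeping (unimodularity of $\left(\begin{smallmatrix} n_1 & 1\\ 1 & n_2\end{smallmatrix}\right)$, the $3$-handle closure collapsing to $\pm CP(2)$, and primeness discarding the odd-form cases) is sound.
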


\begin{thm} \label{width_1111}
Let $M$ be a connected 4-manifold satisfying $width(M)=\{1,\dots, 1\}$. 
Then $M$ is diffeomorphic to a (boundary) connected sum of copies of $S^1\times S^3$, $S^1\times B^3$, and linear plumbings of disk bundles over the sphere. 
\end{thm}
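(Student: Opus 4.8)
The plan is to derive the statement from Proposition~\ref{case11} by an induction that removes its standing hypothesis that every level $3$-manifold $Y_i[D_i\cup E_i]$ is connected. I would induct on the length $|width(M)|$ of the width multiset; the base case $width(M)=\emptyset$ is $M\approx B^4$ or $S^4$, regarded as the empty (boundary) connected sum. For the inductive step, fix a thin position $\mathcal H=b_0\cup C_1\cup D_1\cup E_1\cup\dots\cup E_N\cup b_4$ of $M$ with $width(\mathcal H)=\{1,\dots,1\}$ and $|b_0|=1$, so that every $4$-manifold $b_0\cup C_1\cup\dots\cup C_j$ is connected. If all the $Y_i[D_i\cup E_i]$ are connected, Proposition~\ref{case11} applies verbatim and we are done; so assume some level is disconnected and let $j$ be least with $Y_j[D_j\cup E_j]$ disconnected.

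\smallskip
\noindent\textit{Step 1: the disconnected level has an $S^3$ component.} Attaching a $1$-handle never disconnects the boundary, and attaching a framed link of $2$-handles never does either (Dehn surgery on a link in a connected $3$-manifold stays connected, and components carrying no $2$-handle are untouched). Hence $Y_j$ and $Y_j[D_j]$ are connected, and the disconnection is produced by the $3$-handles $E_j$, attached along $2$-spheres. On the other hand $c_j=1$ forces, exactly as in the proof of Proposition~\ref{case11}, that $HG(Y_j[D_j])\le 1$, so $Y_j[D_j]$ is $S^3$, $S^1\times S^2$, or a lens space. Surgering such a $3$-manifold along a $2$-sphere produces components again lying in $\{S^3,\ S^1\times S^2,\ \text{lens spaces}\}$, at most one of which fails to be $S^3$, and iterating over the handles of $E_j$ preserves this property; since $Y_j[D_j\cup E_j]$ is disconnected it therefore has a component $X'\cong S^3$.

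\smallskip
\noindent\textit{Step 2: splitting $M$ and inducting.} Distinguish whether $X'$ separates $M$. If $X'$ is non-separating, tubing it along an arc that closes up a loop meeting $X'$ transversally once exhibits a copy of $(S^1\times S^3)\setminus B^4$ inside $M$, so $M\cong(S^1\times S^3)\#M'$ (or the boundary-connected-sum analogue), with $S^1\times S^3$ a nontrivial summand. If $X'$ separates $M$, I invoke Proposition~\ref{props_of_thin}(1): its proof uses only that one of the two pieces into which an $S^3$ level cuts $M$ is a compact codimension-zero submanifold with that boundary, so it applies verbatim to a separating $S^3$ \emph{component} of the level, and it shows neither side of $X'$ is $B^4$ --- the exceptional cases $b_0,b_4$ being excluded because $M\ncong S^4$ and, since $c_j=1$, neither side consists of $b_0$ or $b_4$ alone. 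Thus $M\cong R\#S$ or $M\cong R\natural S$ with both summands nontrivial. In every case $M$ is a nontrivial (boundary) connected sum, and restricting $\mathcal H$ to the two sides produces handle decompositions of the summands whose complexities are the two ``halves'' of $\{1,\dots,1\}$, in particular sub-multisets of $\{1,\dots,1\}$, hence (possibly empty) multisets of $1$'s that are strictly shorter than $width(M)$. By the inductive hypothesis each summand is a (boundary) connected sum of copies of $S^1\times S^3$, $S^1\times B^3$, and linear plumbings of disk bundles over the sphere; since that class is closed under $\#$ and $\natural$, so is $M$, completing the induction.

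\smallskip
\noindent\textit{Main obstacle.} I expect the delicate point to be the last assertion of Step~2: making rigorous that a disconnecting $S^3$ level-\emph{component} splits $M$ as a connected (or boundary connected) sum \emph{compatibly with} $\mathcal H$, and that the two induced widths are genuinely the two pieces of $\{1,\dots,1\}$ rather than merely bounded above by $width(M)$ together with error terms. Because $X'$ is only one component of $Y_j[D_j\cup E_j]$, the handles following $E_j$ are attached to several boundary components at once and $\mathcal H$ cannot simply be cut in two; I would instead control the two sides using the nerve / relative-handlebody description of width from Section~\ref{section_rel_handlebodies} --- in particular the union bound of Proposition~\ref{width_union} and the behaviour of tunnel number under connected sum (Lemma~\ref{tunnel_of_conn_sum}) --- together with Lemma~\ref{sliding_width} to remove any cancelling handle pairs that appear on a side after the cut.
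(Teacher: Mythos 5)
Your overall strategy (reduce to Proposition \ref{case11} by splitting $M$ along spheres and inducting) is the same in spirit as the paper's, but the step you yourself flag as the ``main obstacle'' is precisely where the argument is missing, and the tools you propose to fill it do not suffice. In Step 2 you need two things simultaneously: that the level component $X'\cong S^3$ splits $M$ as $R\#S$ (or $R\natural S$) \emph{compatibly with} $\mathcal H$, and that the induced decompositions of $R$ and $S$ have widths which are sub-multisets of $\{1,\dots,1\}$, each strictly shorter. Nothing in the paper gives this: Proposition \ref{width_union} only bounds $width(M\cup_f N)$ \emph{above} by $width(M)\cup width(N)$, which is the wrong direction (you need the summands' widths bounded by pieces of $width(M)$), and Lemma \ref{tunnel_of_conn_sum} controls tunnel numbers inside one level, not how the handles of later stages distribute across the cut. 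Since the handles after level $j$ are attached along all boundary components at once (and later $1$-handles may reconnect the two sides, later $2$-handles may run over them), simply ``restricting $\mathcal H$ to the two sides'' is not defined, and your appeal to Proposition \ref{props_of_thin}(1) for a single \emph{component} of a disconnected level is likewise not what that proposition states or proves. The non-separating case has the same problem: even granting the $S^1\times S^3$ summand, you have no control on $width(M')$.

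The paper closes exactly this gap by working at the level of the Kirby diagram in $\partial b_0$ rather than cutting along a level $3$-manifold. Using thinness and Lemma \ref{sliding_width} it first shows that in a width-$\{1,\dots,1\}$ thin position the attaching links $L_i$ are disjoint from the attaching regions of the $1$-handles (otherwise a $1/2$-cancelling pair appears), and, turning the decomposition upside down, that the attaching spheres of the $3$-handles bound $3$-balls disjoint from the links. It then takes the first nonempty $D_{i_0}$, places $L_{i_0}$ (an unknot in an $S^3$ component) inside a $3$-ball $G$ bounded by a sphere $F\subset\partial b_0$ disjoint from all other handle data, and isotopes each successive $L_{i_0+l}$ into $G$ by width-preserving slides until this is no longer possible. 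At that point $F$ separates the Kirby diagram itself, giving $M=R\#S$ with an explicit decomposition $R=b_0\cup(\bigcup_l D_{i_0+l})$ satisfying the hypotheses of Proposition \ref{case11} and $S$ having strictly fewer $2$-handles; the induction is on the number of $2$-handles, not on $|width(M)|$. You would need to supply an argument of this kind (or an independent proof that width behaves sub-multiset-wise under splitting along a level $S^3$ component) for your proposal to go through.
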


\begin{proof} 
Let $\mathcal{H}=b_0\cup C_1\cup D_1\cup E_1\cup \dots \cup E_N\cup b_4$ be a thin position of $M$ of width $\{1,\dots,1\}$. The level 3-manifolds $Y_i$, $Z_i$ and $Z_i[E_i]$ may not be connected. 
By definition, $1=c_i=\underset{Y'\subset Y_i}{\sum}c(L',Y')$ where the sum runs through all connected components of $Y_i$ and $c(L',Y')\geq 0$. In particular, $HG(Y')\leq 1$ and $HG(Y'[D_i])\leq 1$ for all $Y'\subset Y_i$. Let $Y_i^*$ be the unique component of $Y_i$ such that $c(L_i^*,Y_i^*)=1$. 

Suppose that $Y_i^*=\left( \#_{j} X'_j\right)\#\left( S^1\times S^2\right)$, where $X'_j\subset Z_{i-1}[E_{i-1}]$ are some connected components. Note that $X'_j=S^3$ for all $j$. Suppose that $D_i\neq \emptyset$, then $0=t_{Y_i^*}(L_i^*)=t_{S^1\times S^2}(L_i^*)$. So there is a 1-handle $C\subset C_i$ so that $C\cup D_i$ is a 1/2-cancelling pair. Lemma \ref{sliding_width} contradicts the minimality of the width. 
We have shown that whenever $D_i\neq \emptyset$, the 1-handles of $C_i$ only act as connected sums of distinct components of $Z_{i-1}[E_{i-1}]$. In other words, $L_i$ does not intersect the attaching regions of the 1-handles of $C_i$. Moreover, since $Y_i^*=S^3$ and $t_{Y_i^*}(L_i)=0$ whenever $D_i=\emptyset$, it must be the case that $L_i$ is an unknot in $S^3$ which we can isotope in $Y_i$ (and so slide $D_i$ over previous handles) to be away from the attaching regions of $\underset{j<i}{\bigcup}\left(C_j\cup D_j\cup E_j\right) \cup C_i$ in $\partial b_0$. 

By turning $\mathcal{H}$ up-side-down, the same argument shows that if $D_i\neq \emptyset$, then the attaching sphere of each 3-handle $E'\subset E_i$ is separating in $Z_i=Y_i[D_i]$. In particular, since $HG(Y'[D_i])\leq 1$ $\forall Y'\subset Y_i$, the attaching sphere of $E'$ must bound a 3-ball in $Z_i$. We can then isotope these attaching spheres in $Z_i$ (thus slide $E_i$ over previous handles) so that $E_i\cap Y_i$ are disjoint from $\underset{j\leq i}{\cup}L_j$ in $\partial b_0$. 

We will prove the theorem by induction on the number of 2-handles of $\mathcal{H}$. If $\mathcal{H}$ has no 2-handles the result is clear. Suppose then it has at least one 2-handle. 
Pick $1\leq i_0\leq N$ to be the smallest index with $D_{i_0}\neq \emptyset$, then $Y_{i_0}^*=S^3$. $L_{i_0}^*$ is an unknot in $S^3$ so it can be isotoped to lie inside a 3-ball $G$ bounded by a 2-sphere $F\subset Y_{i_0}^*$ disjoint from the attaching region of the previous handles in $\partial b_0$. 
Suppose $D_{i_0+1}\neq \emptyset$, then $Y_{i_0+1}^*=\#_j X'_{j,i_0+1}$ for some connected components $X'_{j,i_0+1}\subset Z_{i_0}[E_{i_0}]$. By the previous paragraph, $F$ and $G$ can be picked to avoid the attaching spheres of $E_{i_0}$. We can then think of $F$ and $G$ as embedded in $Z_{i_0}[E_{i_0}]$ and so in $Z_{i_0}[E_{i_0}\cup C_{i_0+1}]$; one can do this by flowing $F$ and $G$ using the morse map induced by $\mathcal{H}$. 
If $L_{i_0+1}\subset Y_{i_0+1}^*$ lies in the same component as $F\cup G\subset Y_{i_0}=Z_{i_0}[E_{i_0}\cup C_{i_0+1}]$, then we can isotope in $Z_{i_0}[E_{i_0}]$ to lie inside $G$. To see this, recall that $Y_{i_0+1}^*=\#_j X'_{j,i_0+1}$ where all but (at most) one $X'_{j,i_0+1}$ are equal to $S^3$, and the non-$S^3$ component must contain $F\cup G$. 
Such isotopies correspond to handle slides of $D_{i_0+1}$ over previous handles which keep the width unchanged. As before, $F$ and $G$ can be picked to be disjoint from the attaching regions of $E_{i_0+1}$. 

We can continue this process of isotopying $L_{i_0+l}$ to be inside $G$ until either $D_{i_0+k}=\emptyset$ for some $k\geq 1$ or $L_{i_0+k}$ lies in a different component of $Y_{i_0+k}$  than the one containing $F\cup G$. 
In any case, this implies that the component of $Y_{i_0+k}$ containing $F\cup G$ must be $S^3$. 
Focusing our attention on the attaching region of the handles in $\partial b_0$, the sphere $F\subset \partial b_0$ will separate the attaching links $\underset{0\leq l\leq k-1}{\bigcup} L_{i_0+l}$ from the rest of the Kirby diagram for $M$. This shows that $M$ splits as a connected sum $M=R\#S$ where $R$ has a handle decomposition given by  $b_0\cup\left(\underset{0\leq l\leq k-1}{\bigcup} D_{i_0+l}\right)$. Notice here that $width(R)\leq \{1,\dots,1\}$, $width(S)\leq \{1,\dots,1\}$, $R$ satisfies the conditions of Proposition \ref{case11}, and $S$ has strictly less 2-handles than $\mathcal{H}$. This concludes the inductive step. 
\end{proof}

\section{Relative handlebodies and generalized trisections}\label{section_rel_handlebodies}
In dimension 3, thin position arguments reduce the problem to the study properties of compression bodies. In dimension 4, we will break our thinned 4-manifold in pieces that only contain the information of the 2-handles: the relative handlebodies $X(Y,L)$. In Subsection \ref{subsection_nerves}, we will see two ways of representing these relative handlebodies. Similar versions of the content of this subsection can be found in \cite{class_trisections} and \cite{trisecting_four_mans}. We include the details here due to the slight change of setting.
We will use the above in Subsection \ref{rel_handlebodies_width} to show that the width does not change when turning up-side-down a handle decomposition. 
In Subsection \ref{rel_handlebodies_trisections} we will relate the width with trisections of closed 4-manifolds. 
As an application, in Subsection \ref{subsection_double} we give an upper bound for the width of the union of two 4-manifolds, which we use to compute an upper bound for the width of sphere bundles over connected surfaces. 

\subsection{Nerves of relative handlebodies} \label{subsection_nerves}

We will describe a way to decompose 4-manifolds obtained by 2-handle attachements on collars of closed 3-manifolds. The ideas in this subsection are motivated by the notion of a Heegaard-Kirby diagrams introduced in \cite{class_trisections}. It would be interesting to see what one can say about thinned $4-$manifolds using results from dimension three and lemmas \ref{nerve_determines_everything} and \ref{lem_nerve_of_X(Y,L)}. The interested reader can look at the proofs of Proposition 3.5 and Theorem 1.2 of \cite{class_trisections} to see examples of this technique. 

For a closed surface $F$ and a collection of pairwise disjoint simple closed curves $\eps \subset F$, denote by $F|\eps$ the closed surface resulting from compressing $F$ along $\eps$. In other words, $F|\eps$ is obtained by capping-off with 2-disks the boundary components of $\overline{F-\eps}$. 

Let $\Sigma$ be a closed surface of genus $g\geq 1$ and let $\alpha, \beta, \gamma \subset \Sigma$ be three collections of $g$ pairwise disjoint non-separating simple closed curves determining three handlebodies $H$, $H'$ and $H''$, respectively. Suppose that $H\cup_\Sigma H''$ is a Heegaard splitting for $\#_k S^1\times S^2$, $k\geq 0$. 
We build a 4-manifold $Z(\Sigma;\alpha,\beta,\gamma)$ as follows:
Attach 2-handles to $\Sigma\times D^2$ along $\alpha \times \{e^{4\pi i /3}\}, \beta\times \{1\}$ and $\gamma\times\{e^{2\pi i/3}\}$ with framings induced by $\Sigma$. The resulting 4-manifold, denoted by $W_1(\Sigma;\alpha,\beta, \gamma)$, has three special 2-spheres on its connected boundary: $\Sigma|\alpha$, $\Sigma|\beta$ and $\Sigma|\gamma$.
 Attach one 3-handle along each sphere to obtain a 4-manifold, denoted by $W_2(\Sigma;\alpha,\beta, \gamma)$, with three boundary components diffeomorphic to: $H\cup_\Sigma H'$, $H'\cup_\Sigma H''$ and $H\cup_\Sigma H''$. Let $W(\Sigma;\alpha,\beta,\gamma)$ be the result of capping-off with 3- and 4-handles the boundary component of $H\cup_\Sigma H'' \approx \#_k S^1\times S^2$. 
A schematic picture of $W$ can be found on the right side of Figure \ref{nerve_of_rel_handle}.

\begin{lem} \label{nerve_determines_everything}
$W(\Sigma;\alpha,\beta,\gamma)$ is determined by the embedding of $H\cup H' \cup H''$. 
\end{lem}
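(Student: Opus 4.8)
The plan is to show that every ingredient of the construction of $W(\Sigma;\alpha,\beta,\gamma)$ depends only on the homeomorphism type of the ``spine'' $H\cup_\Sigma H'\cup_\Sigma H''$ -- three handlebodies glued along their common boundary surface $\Sigma$ -- and in particular is insensitive to the particular systems of curves $\alpha,\beta,\gamma$ chosen to describe those handlebodies. Concretely: if $(\Sigma;\alpha,\beta,\gamma)$ and $(\Sigma';\alpha',\beta',\gamma')$ give homeomorphic spines then, after applying a homeomorphism $\Sigma\ra\Sigma'$ carrying $H,H',H''$ to the respective handlebodies, we may assume $\Sigma'=\Sigma$ and that $\alpha,\alpha'$ are two systems of curves determining one and the same handlebody $H$ (and similarly for $\beta,\beta'$ and $\gamma,\gamma'$); so the content of the statement is that this replacement does not change $W$ up to diffeomorphism.

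I would then run through the construction step by step. (i) The starting block $\Sigma\times D^2$ depends only on the diffeomorphism type of $\Sigma$. (ii) Each of the three $2$-handles is attached along a curve times a marked point of $\partial D^2$ with the framing induced by $\Sigma$; this framing is intrinsic to the curve-in-$\Sigma$, and isotoping an attaching curve inside $\Sigma$ is an isotopy of the attaching region, so it leaves $W_1(\Sigma;\alpha,\beta,\gamma)$ unchanged up to diffeomorphism. (iii) Two systems of curves determining the same handlebody are related by a sequence of ambient isotopies in $\Sigma$ and handle slides (band sums) carried out within $\Sigma$; performed inside the surface, such a slide of an attaching curve is an honest $2$-handle slide in the $4$-manifold and is compatible with the surface framings, so it again does not affect $W_1$ up to diffeomorphism. (iv) The three spheres $\Sigma|\alpha,\Sigma|\beta,\Sigma|\gamma\subset\partial W_1$ along which the structural $3$-handles are attached are determined up to isotopy by the data already fixed, and a $3$-handle attached along an embedded $2$-sphere in the boundary of a $4$-manifold is determined by the isotopy class of that sphere (its framing being unique), so $W_2$ is well defined. (v) Finally, the component $H\cup_\Sigma H''\approx\#_k S^1\times S^2$ is capped off with $3$- and $4$-handles, i.e.\ a copy of the $1$-handlebody $\natural_k(S^1\times B^3)$ is glued on; by the theorem of Laudenbach--Po\'enaru every self-diffeomorphism of $\#_k S^1\times S^2$ extends over $\natural_k(S^1\times B^3)$, so this capping is independent of the chosen identification of the boundary component with $\#_k S^1\times S^2$. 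Chaining (i)--(v) gives the lemma.

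The step I expect to be the main obstacle is the interaction of (iii) and (v): one must make precise that a handle slide of the attaching curves, when carried out within $\Sigma$, produces exactly a new determining curve system with the \emph{correct} surface framings (so that the move really is a $4$-dimensional $2$-handle slide and does not silently alter a framing), and one must invoke Laudenbach--Po\'enaru to pin down the cap so that $W$ is literally well defined as a smooth manifold with parametrized boundary, not merely up to the ambiguity of the capping handles. Both facts are standard, but they are precisely the points where the trisection technology of \cite{trisecting_four_mans} and \cite{class_trisections} enters; I would cite those sources for the details rather than reprove them here.
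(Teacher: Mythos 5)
Your outline matches the paper's strategy up through step (iii) and in the use of Laudenbach--Po\'enaru for the capping, but step (iv) conceals the actual content of the lemma and, as stated, is unjustified. When you replace $\alpha$ by a system $\alpha'$ obtained from it by a disk slide, the manifold $W_1$ is unchanged (this is the $4$-dimensional $2$-handle slide you describe), but the $2$-sphere along which the corresponding structural $3$-handle is attached changes from $\Sigma|\alpha$ to $\Sigma|\alpha'$. These are a priori different spheres in $\partial W_1$, and your assertion that the spheres are ``determined up to isotopy by the data already fixed'' is exactly what has to be proved: the handlebody $H$ determines $\alpha$ only up to disk slides, so the compressed sphere is \emph{not} fixed by the data, and the lemma amounts precisely to showing that attaching the $3$-handle along $\Sigma|\alpha$ or along $\Sigma|\alpha'$ yields diffeomorphic $W_2$'s. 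Nothing in your proposal addresses this, and it does not follow formally from (i)--(iii) plus the uniqueness of $3$-handle attachments along a \emph{fixed} isotopy class of spheres.

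The paper's proof is devoted to this point. It forms an auxiliary manifold $\wt W$ from $W_1$ by attaching one extra $2$-handle $b$ along the slid curve $\alpha'_1$ and $3$-handles along the two components $\wt{S_1},\wt{S_2}$ of $\Sigma|(\alpha\cup\alpha'_1)$, as well as along $\Sigma|\beta$ and $\Sigma|\gamma$. A belt-sphere intersection count shows that $b$ cancels the $3$-handle along $\wt{S_2}$, and after sliding the remaining $3$-handle over the cancelled one its attaching sphere becomes $\Sigma|\alpha$, so $\wt W\approx W_2(\Sigma;\alpha,\beta,\gamma)$; by the symmetric argument $\wt W\approx W_2(\Sigma;\alpha',\beta,\gamma)$, which closes the gap. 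Some argument of this kind (or an explicit isotopy in $\partial W_1$ between $\Sigma|\alpha$ and $\Sigma|\alpha'$, which you would have to construct) is required. By contrast, the two issues you single out as the main obstacles --- the framing bookkeeping in the surface-level handle slides and the Laudenbach--Po\'enaru capping --- are the routine parts, and the paper dispatches each in a sentence.
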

\begin{proof}
The procedure of capping-off the boundary component of $W_2$ given by $H\cup_\Sigma H''$ with 3- and 4-handles is unique by \cite{uniqueness_of_34_handles}, so it is enough to show that $W_2(\Sigma;\alpha, \beta, \gamma)$ is determined by the associated handlebodies $H$, $H'$ and $H''$. Recall that two collections of $g$ pairwise disjoint non-separating simple closed curves in $\Sigma$ determine the same handlebody if and only if they differ by disk slides on the surface. Thus by the symmetry of the construction of $W_2$, it suffices to check $W_2(\Sigma; \alpha, \beta, \gamma)=W_2(\Sigma; \alpha', \beta, \gamma)$ when $\alpha$ and $\alpha'$ differ by one disk slide. In this setup, disk slides correspond to 4-dimensional 2-handle slides so $W_1(\Sigma;\alpha,\beta, \gamma)=W_1(\Sigma;\alpha',\beta, \gamma)$.

Label the components of $\alpha$ and $\alpha'$ so that $\alpha_1\neq \alpha'_1$ and $\alpha_i=\alpha'_i$ for all $i>1$. It follows that $\alpha_1$ and $\alpha'_1$ are disjoint, forcing $\overline{\Sigma-(\alpha \cup \alpha'_1)}$ to be disconnected. Let $S_1$ and $S_2$ be its two components and label them so that $S_2$ is a thrice punctured sphere with $\partial S_2=\alpha_1 \cup \alpha'_1 \cup \alpha_{j_0}$ for some $j_0>1$. Write the corresponding components of $\Sigma|(\alpha\cup \alpha'_1)$ as $\wt{S_1}\cup \wt{S_2}$.
Let $\wt W$ be the 4-manifold resulting from $W_1(\Sigma; \alpha, \beta, \gamma)$ by attaching a 2-handle along $\alpha'_1 \times \{e^{4\pi i /3}\}$ and 3-handles along $\wt{S_1},\wt{S_2}$, $\Sigma|\beta$ and $\Sigma|\gamma$.

Let $b\approx D^2\times D^2$ be the 2-handle attached along $\alpha'_1$ and $c_i$ the 3-handle attached along $\wt{S_i}$, for $i=1,2$. 
Since the framing of $b$ is given by $\Sigma$, one can check that the intersection $\Sigma\cap b=S^1\times [-1,1]$ when written in the coordinates of the handle. Also, the belt sphere of $b$, which is given by $\{0\}\times S^1$, is isotopic in $\partial b$ to a loop of the form $\{t_0\} \times S^1$ for some $t_0\in S^1$. To see that, recall that the belt sphere is a component of the Hopf link $\big(\{0\}\times S^1\big) \cup \big(S^1 \times \{0\}\big)$ in $\partial b\approx S^3$. It follows that the belt sphere of $b$ intersects $\Sigma$ in two points, one per side of $\alpha'_1$ in $\Sigma$. Thus the belt sphere intersects $S_2$ in one point. 
On the other hand, $\wt{S_2}$ is the union of $S_2$ with the cores of the 2-handles given by $\alpha_1$, $\alpha'_1$ and $\alpha_{j_0}$, which are disjoint from the belt sphere of $b$. Hence the belt sphere of $b$ will intersect $\wt{S_2}$ geometrically once. Thus the 3-handle $c_2$ cancels with $b$. Similarly, $\wt{S_1}$ intersects the belt of $b$ once. 
In order to eliminate $b$ and $c_2$, it is necessary to slide $c_1$ along $c_2$; this will change the attaching sphere of $c_1$ from $S_1$ to $\Sigma|\alpha$. Therefore, $\wt{W} \approx W_2(\Sigma; \alpha, \beta, \gamma)$. 
Analogously, one can show that $ \wt{W} \approx W_2(\Sigma; \alpha', \beta, \gamma)$, and so $W$ only depends on the handlebodies $H\cup H'\cup H''$.
\end{proof}

\begin{defn} \label{nerve}
Let $H,H',H''$ be three connected handlebodies with common boundary a surface $\Sigma$. We say that the tuple $T=(\Sigma; H,H',H'')$ is a \textbf{nerve of W} if $W\approx W(\Sigma;\alpha, \beta, \gamma)$ for some collections of curves $\alpha, \beta, \gamma \subset \Sigma$ determining $H,H',H''$, respectively. 
\end{defn}

Let $L\subset Y$ be a framed link inside a closed 3-manifold $Y$, and let $X(Y,L)$ be the smooth 4-manifold built from $Y\times [0,1]$ by attaching 2-handles along $L\times\{1\}$. 

There is a correspondance between 4-manifolds of the form $W(H,H',H'')$ and $X(Y,L)$. For the closed case, $W$ is a trisection diagram (see Subsection \ref{rel_handlebodies_trisections}), and the correspondance has been proven in \cite{trisecting_four_mans}. The proofs can be extended to our context.

The following lemma is essentially Lemma 4.1 of \cite{bridge_trisections_4M} or Lemma 14 of \cite{trisecting_four_mans}. For completeness and due to the slight change of setting (closed case vs relative case), we include a proof. 

\begin{lem} \label{lem_nerve_of_X(Y,L)}
Let $L\subset Y$ be a framed link inside a closed 3-manifold. Then there exist handllebodies $H$, $H'$, $H''$ so that $X(Y,L)\approx W(\Sigma; H,H',H'')$.
\end{lem}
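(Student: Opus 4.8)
The plan is to produce the Heegaard surface $\Sigma$ and the three handlebodies $H,H',H''$ directly from a handle decomposition of $X(Y,L)$, then invoke Lemma \ref{nerve_determines_everything} to see that $W(\Sigma;H,H',H'')$ is well-defined and equals $X(Y,L)$. First I would fix a Heegaard splitting $Y=\mathcal{H}_1\cup_{\Sigma_0}\mathcal{H}_2$ in which $L$ is isotoped to lie in the core of $\mathcal{H}_2$ (this is possible for any link, at the cost of raising the genus; a neighborhood of $L$ together with a spanning system of arcs in $\mathcal{H}_2$ realizes such a splitting by the definition of tunnel number recalled in the Preliminaries). Stabilize if necessary so that, after pushing $L$ slightly off the core, the framing curves of $L$ become embedded non-separating curves on $\Sigma_0$; call the resulting surface $\Sigma$ and let $H$ be the handlebody on the $\mathcal{H}_1$ side, so that $\alpha\subset\Sigma$ cuts out $H$.

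Next I would build $X(Y,L)$ from $\Sigma\times D^2$ by a turn-the-handle-decomposition-around argument. Writing $Y\times[0,1]$ with a Morse function, the upside-down handle structure on $Y\times[0,1]$ relative to $Y\times\{0\}$ consists of 3-handles dual to the 1-handles of $\mathcal{H}_1$ and 1-handles dual to the 2-handles, i.e. $Y\times[0,1]$ is obtained from a collar of $\Sigma$ by attaching 2-handles along a $\gamma$-system (the cocores, which cut $\mathcal{H}_2$ into a ball, give curves $\gamma\subset\Sigma$ determining $H''$) and then 3-handles that cap off the $\Sigma|\gamma$ spheres, reaching $Y$ again. Since $L$ was arranged to sit on $\Sigma$ as the $\beta$-system with its surface framing, attaching the 2-handles of $X(Y,L)$ along $L\times\{1\}$ adds 2-handles along $\beta\times\{1\}$ with framing induced by $\Sigma$. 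Collecting the three families of 2-handles attached to $\Sigma\times D^2$ along $\alpha$, $\beta$, $\gamma$ (the $\alpha$-handles come from a further dual reorganization, or equivalently from the trivial Heegaard handlebody $H$ on the bottom collar) and the 3- and 4-handles exactly reproduces the construction of $W(\Sigma;\alpha,\beta,\gamma)$; the condition that $H\cup_\Sigma H''$ is a connected sum of copies of $S^1\times S^2$ holds because $H\cup H''$ is the boundary of the $\Sigma\times D^2$-plus-$\alpha,\gamma$-handles piece, which is (after the 3- and 4-handle caps) a standard $1$-handlebody boundary — this is where one checks the hypothesis required to even form $W$.

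Finally, Lemma \ref{nerve_determines_everything} guarantees that $W(\Sigma;\alpha,\beta,\gamma)$ depends only on the triple of handlebodies $H,H',H''$, so the tuple $(\Sigma;H,H',H'')$ is a genuine nerve and $X(Y,L)\approx W(\Sigma;H,H',H'')$ as claimed. The main obstacle I expect is the careful bookkeeping in the middle step: making precise how flipping the relative handle decomposition of $Y\times[0,1]$ over turns 1-handles into 3-handles and 2-handles into 2-handles along cocore curves, and verifying that after the required stabilizations all three curve systems $\alpha,\beta,\gamma$ simultaneously live on one surface $\Sigma$ with the correct (surface-induced) framings, with the two handlebodies $H,H''$ forming a Heegaard splitting of $\#_k S^1\times S^2$. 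This is exactly the content of Lemma 4.1 of \cite{bridge_trisections_4M} / Lemma 14 of \cite{trisecting_four_mans}, transcribed to the relative setting, so I would follow those arguments and only flag the points where the non-closed collar $Y\times[0,1]$ changes the picture (namely, that no 0- or 4-handles of the ambient $Y\times[0,1]$ appear, and the $\beta$-handles are genuinely extra 2-handles rather than being cancelled).
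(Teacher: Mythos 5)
There is a genuine gap, and it starts with the role-assignment of the three handlebodies. You take $H=\mathcal{H}_1$ (the side \emph{not} containing $L$) and $\gamma$ to be the meridians of $\mathcal{H}_2$, so in your triple $H''=\mathcal{H}_2$ and $H\cup_\Sigma H''=Y$. But in the definition of $W(\Sigma;\alpha,\beta,\gamma)$ the union $H\cup_\Sigma H''$ is exactly the boundary component that gets capped off with 3- and 4-handles, so it must be $\#_k S^1\times S^2$; for a general closed $Y$ your triple does not even satisfy the hypothesis needed to form $W$, and the two surviving boundary components cannot then be matched with $Y$ and $Y[L]$. Your attempted verification of this hypothesis is circular: you argue that $H\cup_\Sigma H''$ is standard ``because it is the boundary of the $\alpha,\gamma$-piece, which after the 3- and 4-handle caps is a 1-handlebody boundary'' --- but one is only allowed to cap with 3- and 4-handles once the boundary is already known to be $\#_k S^1\times S^2$. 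The correct triple, and the one the paper uses, is: $H$ is the Heegaard handlebody containing $L$ in its core, $H'$ is its complement (so $H\cup_\Sigma H'=Y$), and $H''$ is $H$ surgered along the framed link $L$ (so $H'\cup_\Sigma H''=Y[L]$); then $H\cup_\Sigma H''$ is the double of $H$ surgered along $|L|$ core circles, hence $\#_{g-|L|}S^1\times S^2$ for any framings --- this is the non-circular source of the hypothesis, and it is also where the surgered handlebody (absent from your proposal) must enter.

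Two further points. First, your step ``stabilize so that, after pushing $L$ off the core, the framing curves of $L$ become curves on $\Sigma_0$'' with surface framing equal to the given framing is a nontrivial lemma in its own right (Gay--Kirby's surface-framed position, with stabilizations depending on the framings), and it is unnecessary here: in the paper's proof $L$ is never placed on $\Sigma$; the 2-handles of $X(Y,L)$ are attached along $L$ in the core of $H$ with its given framing, the framing data being absorbed into $H''=H[L]$ (compare Lemma \ref{lem_nerve_of_X(Y,L)_dual}, where the $\gamma$-curves dual to $\alpha$ recover $L$). Second, the ``turn-the-decomposition-around'' paragraph does not hold together as written: $Y\times[0,1]$ needs no handles relative to $Y\times\{0\}$, and rebuilding it from $\Sigma\times D^2$ requires 2-handles along both an $\alpha$- and a $\beta$-system plus two 3-handles, not 2-handles along a single $\gamma$-system. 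The actual identification in the paper is more direct than the bookkeeping you defer: splitting $X(Y,L)$ along the trace of $\Sigma$, the piece over $H'$ is a product $H'\times I$, while the piece over $H$ is $H\times I$ with 2-handles attached along cores, which cancel $|L|$ of its 1-handles and leave a 1-handlebody of genus $g-|L|$; this is what exhibits $X(Y,L)$ as $W(\Sigma;H,H',H'')$, and it is the step your proposal leaves unproved.
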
 
\begin{proof}
Take $(\Sigma; H, H')$ a Heegaard splitting of genus $g\geq 1$ of $Y$ so that $L\subset H$ is a subset of its core; i.e. $(\Sigma,L)$ is an admissible pair. Then $H$ after surgery along $L$ is still a 3-dimensional handlebody denoted by $H''$. 
Let $h:X \ra [0,2]$ be a Morse function arising from the construction of $X$ with $h^{-1}(0)=Y\times \{0\}$, $h^{-1}(2)\approx Y[L]$, and all critical points of index 2 at $t=1$. The flow of $h$ restricted to $H'\times \{0\}$ induces an injective isotopy in $X$ between $H'\times \{0\}$ and the handlebody $\overline{Y[L] - H''}$, thus a product region. We can use the latter region to add a copy of $\Sigma \times I$ to $H''$ and assume the three handlebodies to intersect simultaneously at the surface $\Sigma$ (see Figure \ref{nerve_of_rel_handle}).
By construction, $H\cup_\Sigma H''$ is a Heegaard splitting for a connected sum of $g-|L|$ copies of $S^1\times S^2$. 

Let $X_2$ be the 4-manifold given by flowing $H\times \{0\}$ with $h$. $X_2$ is obtained by adding 2-handles to $H\times [0,1]$ along $L\times \{1\}$. Note that $H\times [0,1]$ is a 4-dimensional 1-handlebody of genus $g$ with one 0-handle and $g$ 1-handles, where the $1-$handles are in correspondance with any set of $g$ meridians determining $H$. Since $L$ is a subset of the core of $H$, 2-handles along $L$ cancel $|L|$ 1-handles of $H\times [0,1]$. Hence $X_2$ is a 1-handlebody of genus $g-|L|$. It follows that $X(Y,L)\approx W(H,H',H'')$.
\end{proof} 

\begin{figure}[h]
\centering
\includegraphics[scale=.25]{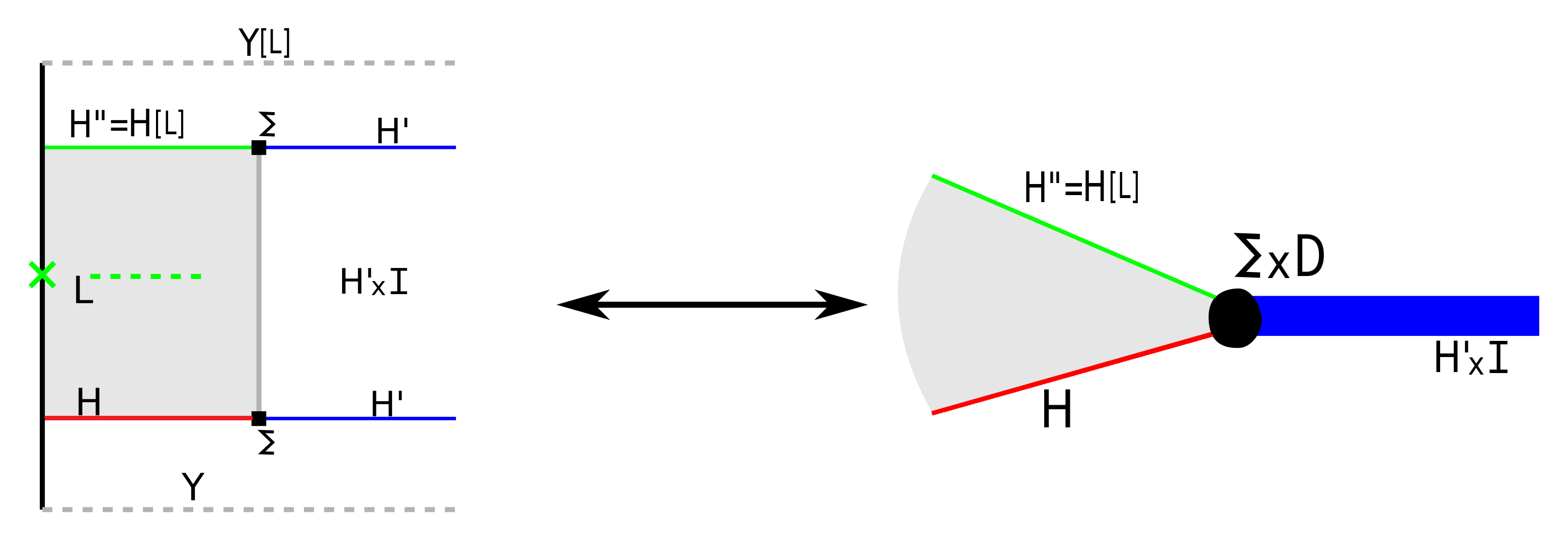}
\caption{Schematic diagram of the Morse function of $X(Y,L)$ (left) and $W(\Sigma; H,H',H'')$ (right).}
\label{nerve_of_rel_handle}
\end{figure}

\begin{lem} \label{lem_nerve_of_X(Y,L)_dual}
Let $\Sigma$ be a surface and $H,H',H''$ be handlebodies with boundary $\Sigma$ satisfying $H\cup_\Sigma H''=\#_k S^1\times S^2$ for some $0\leq k\leq g$. Then for $Y:=H\cup_\Sigma H'$, there is a link $L\subset Y$ satisfying $X(Y,L)\approx W(H,H',H'')$. 
\end{lem}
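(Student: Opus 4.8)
The plan is to reverse the construction of Lemma~\ref{lem_nerve_of_X(Y,L)}. Given $H,H',H''$ with common boundary $\Sigma$ (whose genus $g$ is $\ge 1$, as required for $W$ to be defined) and $H\cup_\Sigma H''\approx\#_k S^1\times S^2$, I would first produce a framed link $L\subset core(H)$ with $g-k$ components whose surgery carries $H$ to $H''$, then set $Y:=H\cup_\Sigma H'$ and observe that the Heegaard splitting $(\Sigma;H,H')$ of $Y$ together with $L$ is precisely an admissible pair of the kind fed into Lemma~\ref{lem_nerve_of_X(Y,L)}. Running the construction in that proof on this data produces $X(Y,L)\approx W(\Sigma;\alpha,\beta,\gamma)$ for suitable cut systems $\alpha,\beta,\gamma$ of $H,H',H''$, and since $W(\Sigma;\alpha,\beta,\gamma)\approx W(H,H',H'')$ by Lemma~\ref{nerve_determines_everything} and Definition~\ref{nerve}, this is the desired conclusion.

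The only real content is the construction of $L$, for which I would invoke the standardness of Heegaard splittings of $\#_k S^1\times S^2$ (see \cite{trisecting_four_mans} and the references therein): because $H\cup_\Sigma H''$ is such a splitting of genus $g$, one may choose cut systems $\alpha=\{\alpha_1,\dots,\alpha_g\}$ for $H$ and $\gamma=\{\gamma_1,\dots,\gamma_g\}$ for $H''$ that are in \emph{standard position}, meaning $\alpha_i=\gamma_i$ for $1\le i\le k$, while for $k<i\le g$ the curves $\alpha_i$ and $\gamma_i$ meet transversely in a single point and $\alpha_i\cap\gamma_j=\emptyset$ whenever $i\ne j$. For each $k<i\le g$ the curve $\gamma_i$ meets $\alpha_i$ once and is disjoint from the other $\alpha_j$, so a push-off of $\gamma_i$ into $int(H)$ can be isotoped in $H$ onto the core loop dual to $\alpha_i$; let $L_i$ be such a push-off, framed by $\Sigma$, and put $L=L_{k+1}\cup\dots\cup L_g$. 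Surgering $H$ along $L$ with the surface framing destroys the meridian disks bounded by $\alpha_{k+1},\dots,\alpha_g$ and installs new meridian disks with boundaries $\gamma_{k+1},\dots,\gamma_g$; since $\alpha_i=\gamma_i$ for $i\le k$, the surgered handlebody carries the cut system $\{\gamma_1,\dots,\gamma_g\}$ and hence is $H''$ rel $\Sigma$. Thus $L\subset core(H)$, $(\Sigma;H,H')$ is a Heegaard splitting of $Y$, and $H$ surgered along $L$ equals $H''$, so the argument of Lemma~\ref{lem_nerve_of_X(Y,L)} applies and gives $X(Y,L)\approx W(\Sigma;\alpha,\beta,\gamma)\approx W(H,H',H'')$.

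The step I expect to require the most care is the "standard position" reduction together with its framing bookkeeping: one must check not merely that surgery on the core loops dual to $\alpha_{k+1},\dots,\alpha_g$ yields \emph{a} handlebody, but that with the surface framing its canonical cut system is exactly $\gamma$ and not a Dehn-twisted variant. This becomes routine once one works inside the standard genus-$g$ model of $H\cup_\Sigma H''$, in which $\Sigma$ is a connected sum of $k$ tori each carrying the single curve $\alpha_i=\gamma_i$ and $g-k$ tori each carrying a standard dual pair $(\alpha_i,\gamma_i)$, so that surgery on the $i$-th core loop is literally the operation replacing a solid-torus summand by its dual. The remaining points—that a push-off of a once-intersecting curve lands on the appropriate core loop, that $\beta$ may be chosen to be any cut system for $H'$, and that the hypothesis $0\le k\le g$ matches $|L|=g-k$—are immediate from the definitions.
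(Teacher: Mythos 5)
Your proposal is correct and follows essentially the same route as the paper: put cut systems $\alpha$ for $H$ and $\gamma$ for $H''$ in standard position (using that $H\cup_\Sigma H''$ is a Heegaard splitting of $\#_k S^1\times S^2$), take $L$ to be the curves $\gamma_{k+1},\dots,\gamma_g$ pushed into $H$ with surface framing, and note that surgery along $L$ turns $H$ into $H''$ so that Lemma \ref{lem_nerve_of_X(Y,L)} applies. The paper's proof is just a terser version of this, with your framing and handlebody-identification details left implicit.
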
 
\begin{proof} 
Since $H\cup_\Sigma H''=\#_k S^1\times S^2$, there are collections of $g$ curves $\alpha,\gamma\subset \Sigma$ determining $H$ and $H''$, respectively, such that $\alpha_l = \gamma_l$, for all $1\leq l \leq k$ and $|\alpha_i \cap \gamma_j|=\delta_{i,j}$, for all $k< i,j\leq g$. The desired link is given by $L=\{\gamma_i: k<i\leq g\}$. Notice that the $\beta$ curves, which induce $H'$, are useful to determine the embedding of $\Sigma$ into $H\cup H'$ and so the embedding of $L$. 
\end{proof} 

\begin{remark}\label{nerve_S3}
Notice that if $L=\emptyset$, then $X\approx Y\times I$, and $(\Sigma; H,H',H''=H)$ is a nerve for $X$ for any $H\cup_\Sigma H'$ Heegaard splitting of $Y$. In particular, $(S^2;B^3,B^3,B^3)$ is a nerve for $S^3\times I$.
\end{remark}

\begin{remark}\label{W_op}
We can build an up-side-down version of $X(Y,L)$ from $Y[L]$ by attaching 2-handles to $Y[L]\times [0,1]$ along $L'\times\{1\}$, where $L'\subset Y[L]$ are co-cores of the 2-handles along $L$. 
In terms of nerves, one can see that if $X(Y,L)=W(\Sigma;H,H',H'')$, then this up-side-down presentation for $X(Y,L)$ can be described by $W(\Sigma;H'',H',H)$. 
\end{remark}

\subsection{Relative handlebodies and width}\label{rel_handlebodies_width}

Let $\mathcal{H}=b_0 \cup C_1\cup D_1\cup E_1\cup \dots \cup E_N \cup b_4$ be a handle decomposition of $M$. Recall that \[Y_i=\partial \left( b_0\cup C_1\cup D_1\cup E_1\cup \dots C_i\right) \text{ and } Z_i=Y_i[D_i].\] 
For each $1\leq i\leq N$, denote by $W_i=X(Y_i,L_i)$, note that $W_i$ may be disconnected. 
Let $(\Sigma_i;H_i,H_i',H_i'')$ be nerves for $W_i$ with smallest genus\footnote{For closed disconnected surfaces, the genus is the sum of the genera of the connected components.} satisfying $W_i=X(Y_i,L_i)=W(\Sigma_i;H_i,H_i',H_i'')$.
$M$ can be decomposed as 
\[ \left[b_0\cup C_1\right] \bigcup_{Y_1} W_1 \bigcup_{Z_1} \left[\eta(Z_1)\cup E_1\cup C_2\right] \bigcup_{Y_2} \dots \bigcup_{Y_N} W_N \bigcup_{Z_N} \left[\eta(Z_N)\cup E_N\cup b_4\right].\] 
Note that the proofs of Lemmas \ref{lem_nerve_of_X(Y,L)} and \ref{lem_nerve_of_X(Y,L)_dual} show that
\[ width(\mathcal{H})=\left\{ \underset{\Sigma'\subset \Sigma_i}{\sum} max\{2g(\Sigma')-1,0\}\mid i=1,\dots,N\right\}.\]
Let $\mathcal{H}^{op}=\wt b_0 \cup \wt C_1\cup \wt D_1\cup \wt E_1\cup \dots \cup \wt E_N \cup \wt b_4$ be the up-side-down handle decomposition of $M$, where $\wt b_0 = b_4$, $\wt C_i=E_{N-i}$, $\wt D_i = D_{N-i}$, $\wt E_i= C_{N-i}$ and $\wt b_4=b_0$. The relative handlebodies $\wt W_i$ satisfy (see Remark \ref{W_op}), 
\[ \wt W_i = \left(W_{N-i}\right)^{op}=W(\Sigma_i;H_i,H_i',H_i'')^{op}=W(\Sigma_i;H_i'',H_i',H_i).\]
We have proven the following
\begin{lem}\label{lem_op}
Let $\mathcal{H}$ be a handle decomposition of $M$. Then
\[ width(\mathcal{H})=width(\mathcal{H}^{op}).\] 
\end{lem}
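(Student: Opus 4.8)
The plan is to extract, from the two characterizations established just before the statement, the observation that the width of a handle decomposition is computed entirely from the genera of the minimal-genus nerves $(\Sigma_i; H_i, H_i', H_i'')$ of the relative handlebodies $W_i = X(Y_i, L_i)$, and then to check that passing to $\mathcal{H}^{op}$ permutes these nerves without changing any of the surfaces $\Sigma_i$. Concretely, first I would recall the formula
\[ width(\mathcal{H}) = \left\{ \underset{\Sigma'\subset \Sigma_i}{\sum} max\{2g(\Sigma')-1,0\} \;\middle|\; i=1,\dots,N\right\}, \]
which was justified by Lemmas \ref{lem_nerve_of_X(Y,L)} and \ref{lem_nerve_of_X(Y,L)_dual}: the $i$-th level $Y_i$ with attaching link $L_i$ has $c_i$ equal precisely to this sum over the components of a minimal-genus nerve surface, since $c(L', Y') = 2t_{Y'}(L') + 1 = 2g(Y', L') - 1$ and the minimal genus of a nerve surface realizes $g(Y', L')$ (and the $L' = \emptyset$ case matches $\max\{2HG(Y')-1, 0\}$ via Remark \ref{nerve_S3}).

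Next I would identify the levels of $\mathcal{H}^{op}$. With $\mathcal{H}^{op} = \wt b_0 \cup \wt C_1 \cup \wt D_1 \cup \wt E_1 \cup \dots \cup \wt E_N \cup \wt b_4$ where $\wt b_0 = b_4$, $\wt C_i = E_{N-i}$, $\wt D_i = D_{N-i}$, $\wt E_i = C_{N-i}$, $\wt b_4 = b_0$ (indexing so the $i$-th $2$-handle collection of $\mathcal{H}^{op}$ is $D_{N-i}$, turned up-side-down), the $i$-th relative handlebody of $\mathcal{H}^{op}$ is $\wt W_i = (W_{N-i})^{op}$. By Remark \ref{W_op}, if $W_{N-i} = W(\Sigma_{N-i}; H_{N-i}, H_{N-i}', H_{N-i}'')$ then $(W_{N-i})^{op} = W(\Sigma_{N-i}; H_{N-i}'', H_{N-i}', H_{N-i})$, so $\wt W_i$ admits a nerve on the very same surface $\Sigma_{N-i}$, merely with the roles of $H$ and $H''$ swapped. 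Since the construction $W(\Sigma; \cdot,\cdot,\cdot)$ is symmetric in its three handlebody arguments up to diffeomorphism (this symmetry is exactly what is used in Lemma \ref{nerve_determines_everything}), the minimal genus of a nerve surface for $\wt W_i$ equals that for $W_{N-i}$, namely $g(\Sigma_{N-i})$ componentwise. Therefore the multiset $\{\,\sum_{\Sigma' \subset \wt\Sigma_i}\max\{2g(\Sigma')-1,0\}\,\}_i$ computing $width(\mathcal{H}^{op})$ is just a reindexing ($i \mapsto N-i$) of the multiset computing $width(\mathcal{H})$, and multisets are insensitive to reindexing, giving $width(\mathcal{H}) = width(\mathcal{H}^{op})$.

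I expect the main obstacle to be making precise the claim that a minimal-genus nerve surface for $W$ and for $W^{op}$ have the same genus — that is, that the reordering $W(\Sigma; H, H', H'') \leftrightarrow W(\Sigma; H'', H', H)$ does not allow a strictly smaller nerve on one side than the other. The cleanest way around this is to note that $(\cdot)^{op}$ is an involution on $4$-manifolds of this type and preserves the property of being presented by a nerve of a given genus (since it literally preserves the surface), so any nerve of genus $g$ for $W$ yields one of genus $g$ for $W^{op}$ and vice versa; hence the minimal genera agree. Everything else is bookkeeping with the already-established width formula, so in fact, as the excerpt notes, the lemma has essentially been proven in the preceding discussion — the proof is a one-line assembly of the displayed formula, the identification $\wt W_i = W(\Sigma_i; H_i'', H_i', H_i)$, and the reindexing.
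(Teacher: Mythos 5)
Your argument is correct and is essentially the paper's own proof: the width formula in terms of minimal-genus nerves of the relative handlebodies $W_i=X(Y_i,L_i)$, combined with Remark \ref{W_op} identifying $\wt W_i=(W_{N-i})^{op}=W(\Sigma_{N-i};H_{N-i}'',H_{N-i}',H_{N-i})$ on the same surface, followed by reindexing the multiset. Your extra remark that $(\cdot)^{op}$ is an involution preserving nerve genus just makes explicit the symmetry the paper leaves implicit, so there is nothing substantively different.
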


\subsection{Relative handlebodies and trisections of 4-manifolds}\label{rel_handlebodies_trisections}

An interesting case of the discussion in Subsection \ref{rel_handlebodies_width} is when $M$ is closed and the handle decomposition is self-indexed; i.e., $\mathcal{H}=b_0\cup C_1\cup D_1\cup E_1\cup b_4$. 
Here, the decomposition in relative handlebodies becomes 
\begin{align*}
M=& \left[b_0\cup C_1\right]\bigcup_{Y_1} W_1 \bigcup_{Z_1} \left[\eta(Z_1)\cup E_1\cup b_4\right] \\
=& \left[b_0\cup C_1\right]\bigcup_{Y_1} W(\Sigma;H,H',H'') \bigcup_{Z_1} \left[\eta(Z_1)\cup E_1\cup b_4\right] \\
=&\left[b_0\cup C_1\right]\bigcup X(H,L) \bigcup \left[E_1\cup b_4\right],
\end{align*}
where $Y_1=H\cup_\Sigma H'$, $Z_1=H''\cup_\Sigma H'$, $L\subset H$ is the attaching region of the 2-handles $D_1$ and $X(H,L)$ is the cobordism between $H$ and $H[L]$ (see Figure \ref{nerve_of_rel_handle}). 

Note that the last expression is a trisection of $M$.
Hence, $width(\mathcal{H})=\{max(2g(\Sigma)-1,0)\}$, where $\Sigma$ is the smallest genus trisection surface for $M$ inducing the handle decomposition $\mathcal{H}$. 

One would expect closed 4-manifolds with $width(M)$ being a singleton to be ``small". For example, Proposition \ref{width_1} implies that the only 4-manifolds of trisection genus one are $S^1\times S^3$ and $\pm CP2$. 

For a generic handle decomposition of $M$, we can think of a \textbf{generalized trisection} to be the collection of nerves for the relative handlebodies $\{X(Y_i,L_i)\}_i$, together with data describing the 1-handles and 3-handles pasting them along some boundary components. With this philosophy in mind, in some part of the rest of this work we will derive results about trisections of 4-manifolds as a particular case of constructions on thin position (see Example \ref{sphere_bundles} and Section \ref{app_trisections}). 

\subsection{{Width under specific operations}} \label{subsection_double}
Let $M$ be a 4-manifold with non-empty boundary and denote by $D(M)$ the double of $M$; i.e. $D(M)\approx M\cup_{id_\partial}\overline{M}$. If $\partial M$ is connected, a Kirby diagram for the double of $M$ can be built by adding an unknotted 2-handle with framming zero around each 2-handle of a Kirby diagram for $M$. 
Let $\mathcal{H}=b_0\cup C_1\cup D_1$ be a self-indexed handle decomposition in thin position for $M$ with no 3-handles. Denote by $\wt L$ the attaching links of the new 2-handles for $D(M)$. An application of the slam-dunk move \cite{gompf_stip}, shows that the components of the link $\wt L$ are isotopic in $S^3[C_1\cup D_1]$ to the cores of the solid tori after surgery along $L$. Thus the tunnel number of $\wt L$ in $S^3[C_1\cup D_1]$ is the same as the tunnel number of $L$ in $S^3[C_1]$. In particular, $width(D(M))\leq width(M)\cup \{2t_{S^3[C_1]}(L)+1\}$. 

We can refine the above argument as follows. 

\begin{prop}\label{width_union}
Let $M$ and $N$ be connected 4-manifolds with non-empty boundary. Suppose $f:\partial M\ra \partial N$ is a diffeomorphism between their boundaries. Then 
\[width(M\cup_f N) \leq width(M)\cup width(N)\]
\end{prop}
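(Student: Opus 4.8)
The plan is to choose thin-position handle decompositions for $M$ and $N$ and glue them along $f$ in a way that produces a handle decomposition of $M\cup_f N$ whose width is exactly the (multiset) union of the two widths. First I would fix a self-indexed-in-the-2-handle-sense thin position $\mathcal{H}_M=b_0\cup C_1\cup D_1\cup E_1\cup \dots\cup E_p$ of $M$ relative to $\emptyset$ (so it ends with $3$-handles, no $4$-handle since $\partial M\neq\emptyset$), and a thin position $\mathcal{H}_N$ of $N$. Then I would turn $\mathcal{H}_N$ up-side-down: by Lemma \ref{lem_op}, $\mathcal{H}_N^{op}$ is a handle decomposition of $N$ with $width(\mathcal{H}_N^{op})=width(\mathcal{H}_N)=width(N)$, but now it is a decomposition \emph{relative to $\partial N$}, i.e.\ built on $\partial N\times[0,1]$ by attaching $1$-, $2$-, $3$-handles (and one $0$-handle's worth becomes a top $3$-handle, etc.). Using the diffeomorphism $f$ to identify $\partial M$ with $\partial N$, I can stack $\mathcal{H}_N^{op}$ on top of $\mathcal{H}_M$: the result is a handle decomposition $\mathcal{H}$ of $M\cup_f N$ of the form
\[
b_0\cup C_1\cup D_1\cup E_1\cup\dots\cup E_p\cup \wt C_1\cup \wt D_1\cup \wt E_1\cup\dots\cup \wt E_q,
\]
where the first block of handles is $\mathcal{H}_M$ and the tail is $\mathcal{H}_N^{op}$.

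The second step is to check that this $\mathcal{H}$ is a legitimate handle decomposition in the sense required by Section \ref{def_width} (an alternating $C_i\cup D_i\cup E_i$ pattern ending in a $4$-handle), possibly after merging two adjacent blocks of $1$-handles or of $3$-handles at the seam where $\mathcal{H}_M$ meets $\mathcal{H}_N^{op}$, and after absorbing the now-interior ``$0$-handle'' of $N$ (which, read up-side-down, is a $4$-handle attachment in the middle) — the standard move here is that an interior $3$-handle/$4$-handle followed by a $0$-handle/$1$-handle can be traded, or one simply observes that $M\cup_f N$ genuinely carries such a decomposition because $\partial(M\cup_f N)=\partial M\setminus\partial M=\emptyset$ only when both are closed-up; in the general case $\partial(M\cup_f N)$ may be nonempty and one ends the combined decomposition with the appropriate top handles of $\mathcal{H}_N^{op}$ rather than a $4$-handle, which is still allowed.

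The third step is the width computation. The levels $Y_i$ of $\mathcal{H}$ coming from the $M$-block are exactly the levels of $\mathcal{H}_M$, so they contribute $c(L_i,Y_i)$-values whose multiset is $width(\mathcal{H}_M)=width(M)$. The levels coming from the $N^{op}$-block are exactly the levels of $\mathcal{H}_N^{op}$, which contribute the multiset $width(\mathcal{H}_N^{op})=width(\mathcal{H}_N)=width(N)$. Crucially, attaching the $N^{op}$ handles does not alter any of the earlier $Y_i$'s (those are fixed once the $M$-handles are attached), and conversely the $M$-handles are exactly what sits below $\partial M\times[0,1]=\partial N\times[0,1]$ for the $N^{op}$-block, so the tunnel-number/Heegaard-genus quantities defining the $\wt c_j$ are computed in the same $3$-manifolds $\wt Y_j$ as in $\mathcal{H}_N^{op}$ standing alone. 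Hence $width(\mathcal{H})=width(M)\cup width(N)$, and since $width(M\cup_f N)$ is the infimum over all decompositions, $width(M\cup_f N)\le width(\mathcal{H})=width(M)\cup width(N)$, which is the claim. (The earlier slam-dunk remark is recovered by taking $N=\overline{M}$ and a suitable decomposition, but we no longer need it.)

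\textbf{Main obstacle.} The delicate point is not the width bookkeeping but making the ``stack $\mathcal{H}_N^{op}$ on top of $\mathcal{H}_M$'' operation rigorous and checking it lands in the allowed class of decompositions: one must verify that the seam between the top of $\mathcal{H}_M$ (a collection of $3$-handles $E_p$) and the bottom of $\mathcal{H}_N^{op}$ (a collection of $1$-handles $\wt C_1$) is fine — it is, since $E_p\cup\wt C_1$ is just a $3$-then-$1$ transition which begins a new $C_j\cup D_j\cup E_j$ group — and that the interior $4$-handle/$0$-handle pair arising from $N$ being capped off on one side (namely the $0$-handle of $\mathcal{H}_N$, which becomes a top feature of $\mathcal{H}_N^{op}$) is handled correctly; since $\partial N\neq\emptyset$, $\mathcal{H}_N$ relative to $\emptyset$ has a genuine $0$-handle but $\mathcal{H}_N^{op}$ relative to $\partial N$ has no $0$-handle and no bottom $3$-handle issue, so in fact no spurious interior cap appears and the concatenation is clean. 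I expect this transversality-free but fiddly verification of ``the combined Morse function is still of the required alternating shape, up to merging adjacent same-index blocks'' to be where all the actual work sits; everything downstream is immediate from Lemma \ref{lem_op} and the definition of width.
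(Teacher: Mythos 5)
Your proposal is correct and follows essentially the same route as the paper: take thin positions $\mathcal{H}_M$ and $\mathcal{H}_N$, concatenate $\mathcal{H}_M$ with the up-side-down decomposition $\mathcal{H}_N^{op}$ along $f$, and invoke Lemma \ref{lem_op} to see that the resulting width is $width(M)\cup width(N)$, which bounds $width(M\cup_f N)$ from above. The paper simply states this construction without belaboring the seam bookkeeping you discuss, so your extra verification is harmless elaboration rather than a different argument.
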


\begin{proof} 
Let $\mathcal{H}_M$ and $\mathcal{H}_N$ be a thin position for $M$ and $N$, respectively, and consider $\mathcal{H}_M\cup \mathcal{H}_N^{op}$ a decomposition of $M\cup_f N$ given by doing $\mathcal{H}_M$ first, followed by $\mathcal{H}_N$ in the opposite order. By construction,
\[ width(M\cup_f N)\leq width(\mathcal{H}_M) \cup width(\mathcal{H}_N^{op}).\] 
The result follows from Lemma \ref{lem_op}.
\end{proof} 

\begin{cor} \label{double}
Let $M$ be a 4-manifold with non-empty boundary. Then 
\[width(D(M)) \leq width(M)\cup width(M)\] 
\end{cor}

\begin{remark}
It is important to mention that if one decides to add the 2-handles $\wt L_i$ after $L_i$ (or at the same time), the tunnel number of $L_{i+1}$ in $Y_{i+1}$ will be equal to the tunnel number of $L_{i+1}$ in $Z_{i}[C_{i+1}]$; allowing the complexity $c_{i+1}$ to change with no control. Hence, the upper bound on Corollary \ref{double} is expected to be sharp only for special cases. 
\end{remark}

\begin{examp}[Sphere bundles over surfaces] \label{sphere_bundles} Let $g,n\in \Z$, $g>0$ and denote by $S_g$, $N_g$ the orientable and non-orientable surface of genus $g$, respectively. Let $X_{g,n},Y_{g,n}$ be the disk bundles over $S_g$ and $N_g$ with Euler number $n$, respectively. 
Kirby diagrams for $X$ and $Y$ with only one 2-handle are known (Fig. \ref{disk_bundles_1}), so stimates for the width of such 4-manifolds can be found. More explicitly, for $g>0$, $width(X_{g,n})\leq\{4g+1\}$ and $width(Y_{g,n})\leq \{ 2g+1\}$. 
Using Proposition \ref{double}, we obtain estimates for the width of the corresponding doubles; i.e. sphere bundles over surfaces. 
\[  width(D(X_{g,n}))\leq\{4g+1,4g+1\} \quad \text{ and } \quad width(D(Y_{g,n}))\leq \{ 2g+1,2g+1\} \]

For this particular examples, one can add all 2-handles of $D(X_{g,n})$ (resp. $D(Y_{g,n})$) at the same time and get tunnel numbers $2g+1$ (resp. $g+1$). Lemma \ref{lem_nerve_of_X(Y,L)} together with the discussion on Subsection \ref{rel_handlebodies_trisections} imply that, to draw a genus $m+1$ trisection surface for $M$, it is enough to find a system of $m$ tunnels for the attaching region of the 2-handles\footnote{This is Lemma 2.3 of \cite{characterizing_dehn_surgery}.}. Thus we can draw diagrams for  $D(X_{g,n})$ (resp. $D(Y_{g,n})$) of genus $2g+2$ (resp. $g+2$). For completeness, we draw the diagrams for genus 1 case in Figure \ref{disk_bundles_2}.

\begin{figure}[h]
\centering
\includegraphics[scale=.035]{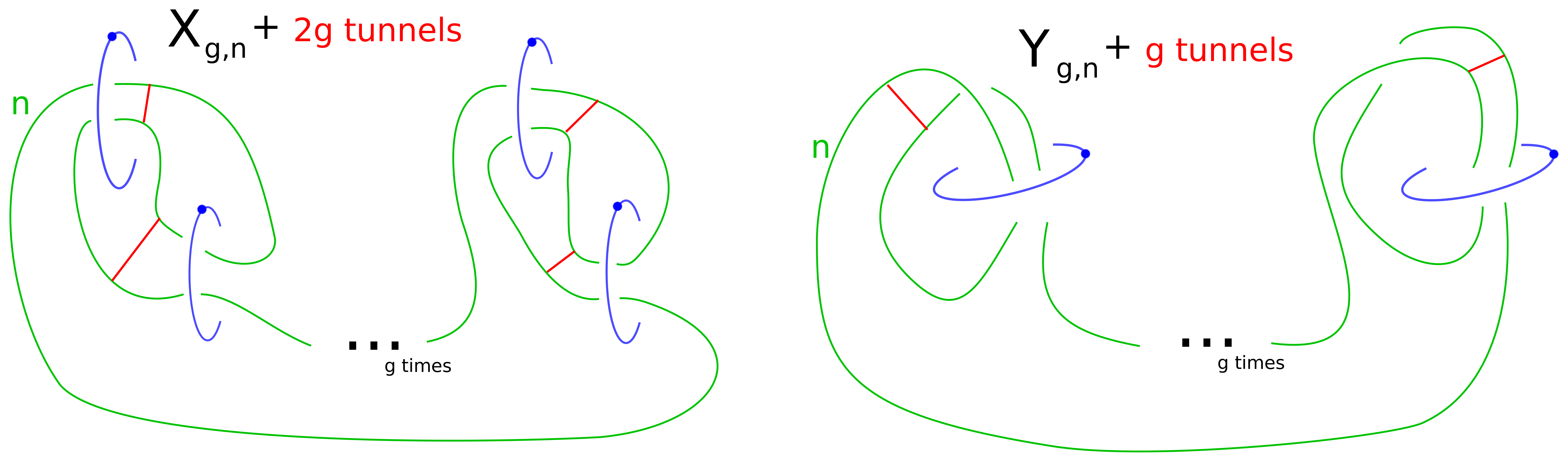}
\caption{Kirby diagrams for disk bundles over closed surfaces with specific systems of tunnels.}
\label{disk_bundles_1}
\end{figure}

\begin{figure}[h]
\centering
\includegraphics[scale=.037]{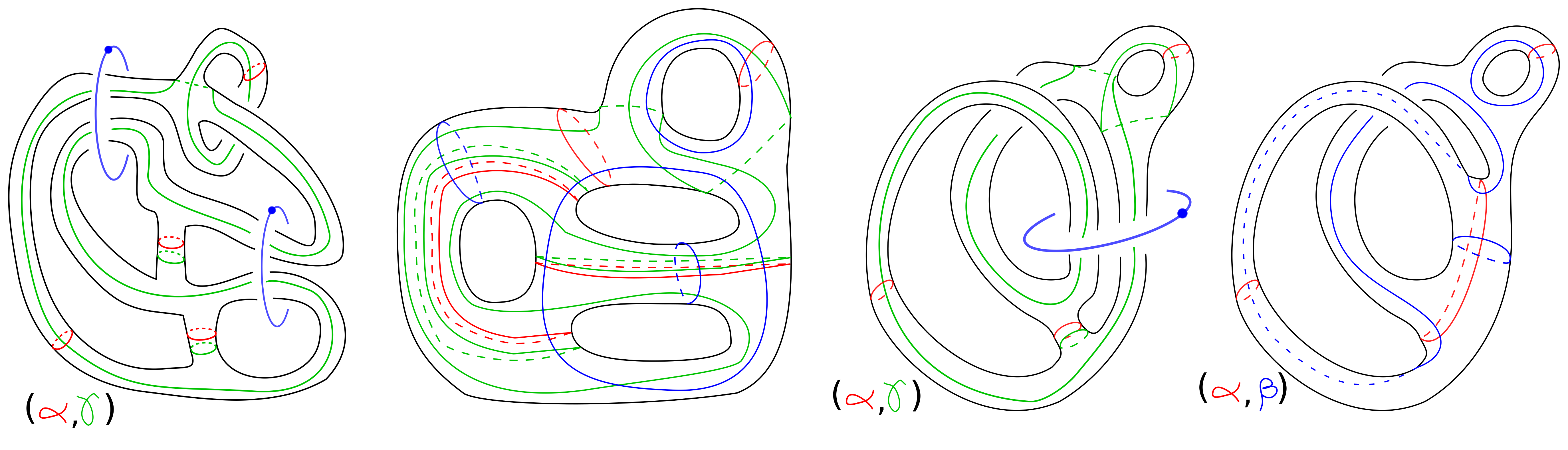}
\caption{Trisection diagrams $D(X_{1,0})\approx T^2\times S^2$ (left) and $D(Y_{1,1})\approx RP^2\wt{\times}S^2$ (right). To change the euler number it is enough to add twisting to the longer $\gamma-$curve. $\alpha$ (red), $\beta$ (blue) and $\gamma$ (green) curves correspond to curves that bound disks in $H$, $H'$ and $H''$, respectivelly.}
\label{disk_bundles_2}
\end{figure}
\end{examp}

\section{Symmetries of relative handlebodies} \label{section_symmetries}

In \cite{thin_position_of_3m}, M. Scharlemann and A. Thompson proved that 3-manifolds of $width<\{5\}$ are 2-fold branched covers of connected sums of copies of $S^1\times S^2$. 
In this section, we relate ideas of trisections of 4-manifolds from \cite{class_trisections}, \cite{bridge_trisection_S4} and \cite{characterizing_dehn_surgery} to discuss an attempt of lifting this result to 4-manifolds with connected boundary. In Subsection \ref{subsection_symmetric} we study symmetries on the relative handlebodies $X(Y,L)$. We use this in Subsection \ref{app_trisections} to talk about symmetric trisection diagrams and to prove a ``trisected version" of a Theorem of J. Birman and H. Hilden.
In Subsection \ref{extending_involutions} we study the extension problem: how to paste symmetric pieces of $M$.

\subsection{Symmetric nerves} \label{subsection_symmetric}
In this subsection $X$ will denote a 4-manifold of the form $X=X(Y,L)$ for some link $L\subset Y^3$.

\begin{defn}We say that a nerve $T=(\Sigma;H,H',H'')$ for $X$ is \textbf{p-symmetric} if there is a piecewise-linear homeomorphism $\tau:\Sigma\ra \Sigma$ of finite period $p$, extending to the interior of each handlebody, satisfying
\begin{enumerate}
\item For each handlebody, the orbit space by the action of $\tau$ is a 3-ball.
\item $Fix(\tau) =Fix(\tau^k)$ for all $1\leq k\leq p$.
\item The image of the fix set of $\tau$ on each handlebody is an unknotted set of arcs in the quotient.
\end{enumerate}
\end{defn}

\begin{remark}
The 2-symmetric condition of a nerve $T=(\Sigma;H,H',H'')$ is equivalent to the existance of an involution $\tau$ of $\Sigma$ extending to the interior of each handlebody such that $\tau$ is conjugate to the hyperelliptic involution.
\end{remark}

Using the ideas of \cite{bridge_trisection_S4}, one can show that if $T$ is a $p$-symmetric nerve of $X$, then the finite order map $\tau:\Sigma\ra \Sigma$ extends to $X$.

\begin{prop} \label{branching_nerve}
Let $L\subset Y$ be a framed link inside a closed 3-manifold and let $T=(\Sigma;H,H',H'')$ be a nerve for $X=X(Y,L)$. If $T$ is $p$-symmetric, then $X$ is a $p$-fold cyclic covering of $S^3\times I$ branched along a properly embedded knotted surface with boundary in both $S^3\times\{0,1\}$.
\end{prop}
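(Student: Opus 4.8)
The plan is to exhibit $X=X(Y,L)$ explicitly as a branched cyclic cover by building a branched cover of each of the pieces in the nerve decomposition and gluing them equivariantly. Recall from the construction of $W(\Sigma;\alpha,\beta,\gamma)$ that $X$ is assembled from $\Sigma\times D^2$, three 2-handles attached along $\alpha\times\{e^{4\pi i/3}\}$, $\beta\times\{1\}$, $\gamma\times\{e^{2\pi i/3}\}$, three 3-handles along $\Sigma|\alpha$, $\Sigma|\beta$, $\Sigma|\gamma$, and finally 3- and 4-handles capping off $H\cup_\Sigma H''$. I would first handle the base piece: since $\tau$ has finite period $p$ and each handlebody orbit space is a $3$-ball, the quotient $\Sigma/\tau$ is a $2$-sphere (being the common boundary of three $3$-balls), and $\Sigma\to\Sigma/\tau=S^2$ is a $p$-fold cyclic branched cover over the branch points $Fix(\tau)$ (condition (2) guarantees the branching is the same at every power, so each branch point has full order $p$). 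Crossing with $D^2$ and noting that $\tau$ acts by covering transformations only on the $\Sigma$ factor, $\Sigma\times D^2\to S^2\times D^2$ is a $p$-fold cyclic cover branched along $Fix(\tau)\times D^2$, a collection of framed unknotted arcs (or a properly embedded surface) in $S^2\times D^2$.

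Next I would push the branched cover across the handles. The key point is that each $2$-handle is attached along a curve that is $\tau$-invariant as a set; by the standard equivariant-handle argument of \cite{bridge_trisection_S4}, one can arrange $\tau$ on a neighborhood of the attaching region so that the $2$-handle $D^2\times D^2$ is $\tau$-equivariant and descends to a $2$-handle downstairs. Thus attaching the three $2$-handles upstairs corresponds to attaching three $2$-handles to $S^2\times D^2$ downstairs, with the branch locus continuing into the new part as an unknotted surface (here condition (3), that the fixed arcs are unknotted in each handlebody quotient, is exactly what keeps the branch surface from becoming knotted as we pass the handles). The same reasoning applies to the three $3$-handles attached along $\Sigma|\alpha$, $\Sigma|\beta$, $\Sigma|\gamma$: these surfaces are $\tau$-invariant, $\tau$ restricted to each is still a cyclic branched cover of $S^2$ (by conditions (1) and (2)), and the $3$-handles are equivariant, descending to $3$-handles downstairs. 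After all the $2$- and $3$-handles, the quotient is $S^3\times I$ with a product structure, and $X\to S^3\times I$ is $p$-fold cyclic branched along a properly embedded surface meeting $S^3\times\{0,1\}$ in the branch links of $\tau|_{H\cup H'}$ and $\tau|_{H''\cup H'}$ on the two Heegaard surfaces. Finally the capping $3$- and $4$-handles on the $H\cup_\Sigma H''=\#_kS^1\times S^2$ boundary component: since $\tau$ extends over $H$ and $H''$ with $3$-ball quotients, it extends over this Heegaard splitting with $S^3$ quotient, hence (using uniqueness of the $3$/$4$-handle capping, cf. \cite{uniqueness_of_34_handles}, equivariantly) over the cap, which descends to a $3$/$4$-handle cap downstairs, leaving $S^3\times\{1\}$ with no branching on that side.

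Assembling, the quotient is $S^3\times[0,1]$ and the total branch locus is a properly embedded surface whose boundary lies in $S^3\times\{0\}$ and $S^3\times\{1\}$, exactly as claimed; ``knotted'' is the generic situation since nothing forces the descended surface to be unknotted in the four-dimensional quotient even though its pieces are unknotted inside the three-dimensional handlebody quotients.

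\medskip

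I expect the main obstacle to be the equivariant handle attachment step: one must verify that the finite-order action $\tau$ on $\Sigma$ (which is only given abstractly, via its extension to the three handlebodies) can be chosen, after isotopy, to be \emph{simultaneously} compatible with the attaching regions of all three $2$-handles and all three $3$-handles, so that every handle in the construction of $W(\Sigma;\alpha,\beta,\gamma)$ is $\tau$-equivariant and descends. This is where the hypotheses that the curves $\alpha,\beta,\gamma$ may be taken $\tau$-invariant (a consequence of $\tau$ extending over the corresponding handlebodies, via condition (1)) and that the fixed arcs are unknotted (condition (3)) are essential, and it is the technical heart borrowed from \cite{bridge_trisection_S4}; the rest of the argument is bookkeeping about which boundary components carry branching.
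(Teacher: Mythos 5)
Your overall strategy is the reverse of the paper's: you try to make $\tau$ equivariant with respect to every handle in the construction of $W(\Sigma;\alpha,\beta,\gamma)$ and then pass to the quotient, whereas the paper first forms the quotient data $(\Sigma/\tau;B,B',B'')$, builds the branch surface downstairs, takes the $p$-fold cyclic cover, and identifies the result with $X$ using Lemma \ref{nerve_determines_everything}. As written, your version has a genuine gap at the capping step. You assert that the action extends over the cap of $H\cup_\Sigma H''\approx\#_k S^1\times S^2$ by an ``equivariant'' use of Laudenbach--Po\'enaru and that this leaves no branching there; neither claim holds. Laudenbach--Po\'enaru gives uniqueness of the capping up to diffeomorphism, not an equivariant extension of a finite cyclic action over the $3$- and $4$-handles. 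More decisively, the quotient of the cap is a simply connected $4$-ball, and the cap upstairs is connected, so for $p>1$ the covering restricted to the cap cannot be unbranched: the branch surface \emph{must} continue into the cap as a collection of disks. Controlling those disks is exactly where the paper needs real input that your outline omits: Plotnick's theorem forces the link $\theta_\alpha\cup\theta_\gamma\subset B\cup B''\approx S^3$ to be an unlink (because its $p$-fold cyclic branched cover is $\#_k S^1\times S^2$), Lemma 2.3 of \cite{bridge_trisections_4M} says an unlink bounds a unique collection of trivial disks in $B^4$, and the $p$-fold cyclic cover of $B^4$ branched along trivial disks is a $4$-dimensional $1$-handlebody, matching the cap. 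Note also that the statement itself requires the branch surface to have boundary in \emph{both} $S^3\times\{0\}$ and $S^3\times\{1\}$ (namely $\theta_\alpha\cup\theta_\beta$ and $\theta_\gamma\cup\theta_\beta$), so ``no branching on that side'' cannot be right.

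The second gap is the one you flag yourself: the claim that $\alpha,\beta,\gamma$ may be taken simultaneously $\tau$-invariant is not a formal consequence of condition (1); it requires an equivariant disk-system argument in each handlebody (lifting cut systems from the quotient tangles) together with Lemma \ref{nerve_determines_everything} to justify replacing the given curves without changing $W$. The paper's downstairs-first route avoids both difficulties: it never needs equivariant handles, and the identification of the lifted manifold with $X$ is precisely Lemma \ref{nerve_determines_everything}, which your argument never invokes. If you want to salvage the upstairs-first approach, you would need to prove the equivariant curve statement and replace the capping paragraph by the Plotnick/trivial-disk argument above.
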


\begin{proof}
By fixing a model handlebody of genus $g$, we can assume $H,H',H''$ are standard and take maps $f_{\alpha\beta},f_{\beta\gamma},f_{\gamma\alpha}$ between the corresponding boundaries codifying the pairwise intersections. Let $\tau$ be the order $p$ homeomorphism of $\Sigma$ extending to the three handlebodies. By assumption, $\tau$ commutes with the $f$-maps and hence the maps descend to the quotients $B:=H/\tau$, $B':=H'/\tau$, $B'':=H''/\tau$ (Figure \ref{Mod_involution}).

By definition the quotient map on each handlebody $q_\tau:H \ra B^3$ is a $p$-fold cyclic branched cover of $B^3$ along a collection of $b$ boundary parallel arcs in $B^3$, say $\theta_\alpha$, $\theta_\beta$ and $\theta_\gamma$. 

\begin{figure}[h]
\centering
\includegraphics[scale=.17]{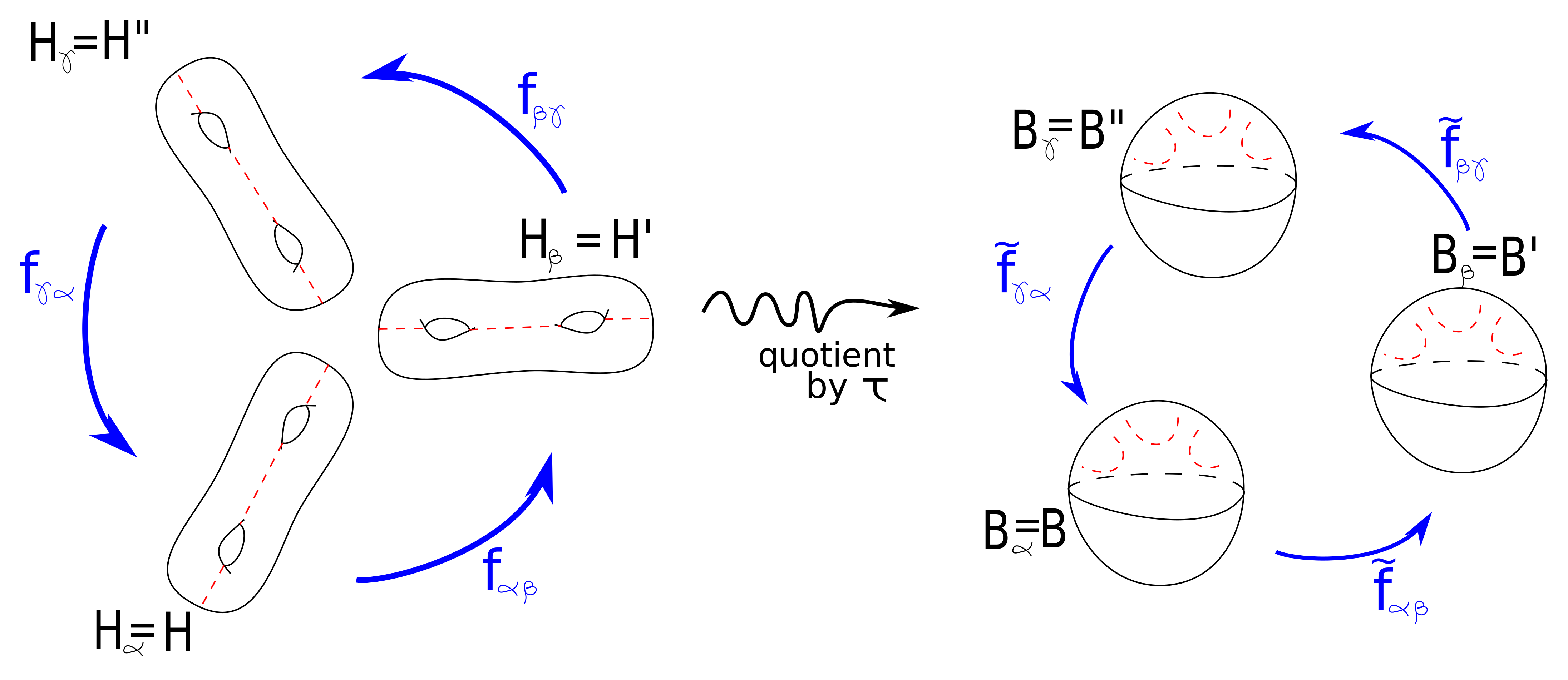}
\caption{The nerve of $T$ descends to a bridge trisection diagram on $S^3\times I$.}
\label{Mod_involution}
\end{figure}

Since, $L$ is a subset of the core of $H$, each component of $L$ is dual to a meridian disk of $H$, and the 3-manifold $H\cup_{\Sigma}H''$ is a connected sum of $k:=g-|L|$ copies of $S^1\times S^2$. 
A corollary in Section 2 of \cite{Plotnick} states that such 3-manifolds arise as $p-$fold cyclic branched cover of $S^3$ only when the branched set is an unlink. 
Thus $\theta_\gamma\cup\theta_\alpha$ is an unlink in $B\cup B'' \approx S^3$, which bounds a unique collection $\mathcal{D}$ of trivial disks in $B^4$ by Lemma 2.3 of \cite{bridge_trisections_4M}. By Remark \ref{nerve_S3}, the tuple $(\Sigma/\tau; B, B', B'')$ is a nerve for $S^3\times I$, and one can complete $\theta_\alpha \cup \theta_\beta \cup \theta_\gamma \subset B\cup B' \cup B''$ to a properly embedded surface $K^2\subset S^3\times I$ by attaching the collection $\mathcal{D}$ along $\theta_\gamma \cup \theta_\alpha$. By construction, 
\begin{align*} 
\big(K\cap (S^3\times \{0\}), S^3\times \{0\}\big) = & \big( \theta_\alpha\cup \theta_\beta, B\cup_{\Sigma/\tau} B'\big)\\
\big(K\cap (S^3\times \{1\}), S^3\times \{1\}\big) = & \big( \theta_\gamma\cup \theta_\beta, B''\cup_{\Sigma/\tau} B'\big)
\end{align*}

We can now take the $p$-fold cyclic covering of $S^3\times I$ branched along $K^2$ and lift the nerve of $S^3\times I$ to a nerve for the resulting 4-manifold. Recall that the $p$-fold cyclic cover of a 4-ball branched along a collection of trivial disks is also 4-dimensional 1-handlebody. By construction, the new tuple is indeed equal to the original nerve for $X$ $(\Sigma; H, H', H'')$. Lemma \ref{nerve_determines_everything} concludes that $X$ is the $p$-fold cyclic branched covering of $S^3\times I$ along $K$ and that the map $\tau$ extends to all $X$. 
\end{proof} 

\begin{remark} 
Suppose that a $p$-symmetric nerve $T=(\Sigma; H, H', H'')$ comes from a Kirby diagram $\mathcal{H}=b_0\cup C_1\cup E_1$ with 1-handles $E_1$ and 2-handles $D_1$, that is, $Y=\#_{|C_1|}S^1\times S^2$ and $\Sigma=\partial \eta(L\cup t)$ where $t$ is a system of tunnels for the attaching link of the 2-handles $L$ in $Y$. Since $H\cup_\Sigma H'=Y=\#_{|C_1|}S^1\times S^2$, we can use the argument in Proposition \ref{branching_nerve} to obtain a $\mathbb{Z}_p$ action on $M:=b_0[C_1\cup D_1]$ which provides a description of $M$ as a $p$-fold cyclic cover of $B^4$ branched along a properly embedded knotted surface. 
\end{remark} 

\begin{remark} \label{more_general}
Although Section \ref{section_symmetries} was written mainly to discuss the symmetries of 4-manifolds of width less than $\{5\}$ (see Section \ref{width_less_than_5}), one can talk about constructions of more general 4-manifolds than those that appear in Definition \ref{nerve}. Take a closed orientable surface $\Sigma$; pick finitely many points $\{t_i\}_{i=1}^{N}\subset \partial D^2$ and meridian systems $\{\alpha_i\}_{i=1}^{N}$ for handlebodies $\{H_i\}_{i=1}^{N}$. 
Let $X$ be the 4-manifold obtained by attaching 2-handles to $\Sigma\times D^2$ along $\alpha_i \times \{t_i\}$ for $i=1,\dots, N$ and capping off with 3- and 4-handles (if desired) some boundary components corresponding to Heegaard splittings of connected sums of copies of $S^2\times S^1$. 
The tuple $(\Sigma; \{H_i\}_i)$ will be the nerve of $X$ and Proposition \ref{branching_nerve} will immediately extend to this context using the same proof. One can naively ask if every 4-manifold with disconnected boundary admits such decomposition. 

\begin{prop}
Let $\Sigma$ and let $T$ be as in Remark \ref{more_general}. If $T$ is $p$-symmetric, then $X$ is the $p$-fold cover of $S^4$ with $|\partial X|$ 4-balls removed branched along a properly embedded surface.
\end{prop}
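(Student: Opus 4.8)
The plan is to carry out the argument of Proposition~\ref{branching_nerve} in the more general setting of Remark~\ref{more_general} and then to identify the base of the resulting branched covering. Write $T=(\Sigma;\{H_i\}_{i=1}^N)$, and let $\tau\colon\Sigma\ra\Sigma$ be the period-$p$ homeomorphism witnessing $p$-symmetry, extending to the interior of each $H_i$. First I would pass to the quotients $B_i:=H_i/\tau$: exactly as in Proposition~\ref{branching_nerve}, each quotient map $H_i\ra B_i$ realizes $B_i$ as a $3$-ball that is a $p$-fold cyclic branched cover of $B^3$ along a system $\theta_i$ of boundary-parallel arcs; in particular $\Sigma/\tau\cong S^2$, and the gluing data codifying the pairwise intersections of the $H_i$ descends to gluing data among the $\partial B_i$. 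This yields a tuple $\bar T=(S^2;\{B_i\}_{i=1}^N)$ which, in the sense of Remark~\ref{more_general}, is the nerve of a $4$-manifold $\bar X$; here I would cap off downstairs precisely the boundary components whose upstairs counterparts were capped in building $X$ (these are cappable since each $B_i\cup_{S^2}B_{i+1}\approx S^3$).

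Next I would invoke the extension of Proposition~\ref{branching_nerve} to this setting, which the excerpt asserts holds verbatim, to conclude that $X$ is the $p$-fold cyclic cover of $\bar X$ branched along a properly embedded surface $\mathcal K$, with $\tau$ extending over $X$ (the last point using the analogue of Lemma~\ref{nerve_determines_everything}). As there, $\mathcal K$ is assembled from the arc systems $\theta_i$ over the pieces $B_i\cup_{S^2}B_{i+1}\approx S^3$ and closed off over the capped-off components by trivial disks: over such a component the relevant upstairs branch locus lies over a connected sum of copies of $S^1\times S^2$, hence is an unlink by the corollary of \cite{Plotnick} quoted in Proposition~\ref{branching_nerve}, and an unlink bounds a unique collection of trivial disks by Lemma~2.3 of \cite{bridge_trisections_4M}. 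The genuinely new step is then to identify $\bar X$. Since $\Sigma/\tau=S^2$ and every $B_i$ is a $3$-ball, the downstairs meridian systems are empty, so $\bar X$ is obtained from $S^2\times D^2$ by attaching \emph{no} $2$-handles, then one $3$-handle along each of the $N$ spheres $S^2\times\{t_i\}\subset S^2\times S^1=\partial(S^2\times D^2)$, and finally capping $N-|\partial X|$ of the resulting $S^3$ boundary components with $4$-handles. I would finish with a handle-cancellation count: the $3$-handle along $S^2\times\{t_1\}$ has attaching sphere meeting the belt circle of the unique $2$-handle of $S^2\times D^2$ transversally in one point, hence cancels it and leaves $B^4$; thereafter each sphere $S^2\times\{t_j\}$ ($j\ge 2$) is, by Alexander's theorem, an unknotted $2$-sphere in the current $S^3$ boundary, and attaching a $3$-handle along such a sphere splits that boundary component into two. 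Thus $S^2\times D^2$ with all $N$ of these $3$-handles attached is diffeomorphic to $S^4$ with $N$ open $4$-balls removed, and capping off $N-|\partial X|$ of them gives $\bar X\approx S^4$ with $|\partial X|$ open $4$-balls removed, as claimed.

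The step I expect to cost the most work is not the handle-level computation above but the faithful transcription of Proposition~\ref{branching_nerve} to the multi-handlebody, disconnected-boundary setting: tracking which arc system $\theta_i$ contributes to which boundary $S^3$ of $\bar X$, checking that the pieces of $\mathcal K$ match up along the $\partial B_i$ so that $\mathcal K$ is an embedded surface with the asserted boundary behavior, and verifying that the capping performed downstairs mirrors that performed upstairs so that the branched-cover structure is consistent over the capped regions. Once that bookkeeping is set up, the identification of the base as $S^4$ with $|\partial X|$ balls removed is the short computation sketched above.
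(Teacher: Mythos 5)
Your proposal is correct and follows essentially the same route as the paper: the paper offers no separate argument beyond asserting that the proof of Proposition \ref{branching_nerve} carries over verbatim (quotient each $H_i$ to a $3$-ball with boundary-parallel arcs, use Plotnick's corollary and the uniqueness of trivial disks to assemble the branch surface, lift, and identify the cover via Lemma \ref{nerve_determines_everything}), which is exactly what you do. Your explicit handle-cancellation identification of the quotient $4$-manifold $(S^2;\{B_i\})$ as $S^4$ with $|\partial X|$ balls removed supplies a detail the paper leaves implicit, and it checks out.
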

\end{remark}

\subsubsection{Drawing the branched set}
We will briefly describe how to obtain band diagrams for the branched set of a given $p$-symmetric nerve.
Recall the notation of Proposition \ref{branching_nerve} (see Fig. \ref{Mod_involution} for simplicity). In Lemma 3.3 of \cite{bridge_trisection_S4}, J. Meier and A. Zupan described an algorithm to obtain a banded link diagram (also called movie presentation) for $K^2\subset S^3\times I$ from the tuple $(S^2; \theta_\alpha, \theta_\beta, \theta_\gamma)$, which the authors called a \textbf{bridge trisection diagram} for $K^2\subset S^3\times I$. 
Using our current notation, the algorithm goes as follows: 

\begin{lem}[Lemma 3.3 of \cite{bridge_trisection_S4} rephrased] \label{bridge_to_band}
Let $(S^2;B^3,B^3,B^3)$ be a nerve for $S^3\times I$ and let $(\theta_\alpha, \theta_\beta, \theta_\gamma)$ be three collections of $b$ trivial arcs on $B^3$ with the same set of end points in $S^2$ such that $\theta_\gamma \cup \theta_\alpha$ is an unlink in $S^3=B_\alpha^3\cup B_\gamma^3$. Let $\theta_\alpha^* \subset S^2$ be a collection of shadows of $\theta_\alpha$, and excise one arc of $\theta_\alpha^*$ corresponding to each component of $\theta_\alpha\cup \theta_\gamma$. Denote by $v\subset \theta_\alpha^*$ the remaining shadows and let $C=\theta_\beta \cup \theta_\gamma \subset S^3$. Then $(C,v)$ is a banded diagram for $K^2$ in $S^3\times I$. 
\end{lem}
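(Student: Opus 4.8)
The plan is to follow the proof of Lemma 3.3 of \cite{bridge_trisection_S4} closely, the only genuinely new point being that here the tangle $\theta_\beta$ has to be carried along as a product; this will turn out to be cosmetic. First I would recall the explicit model of $K^2\subset S^3\times I$ built in the proof of Proposition \ref{branching_nerve}: the three collections of trivial arcs $\theta_\alpha,\theta_\beta,\theta_\gamma$ sit in the three balls of the nerve $(S^2;B^3,B^3,B^3)$ of $S^3\times I$ (Remark \ref{nerve_S3}); one has $K\cap(S^3\times\{0\})=\theta_\alpha\cup\theta_\beta$ and $K\cap(S^3\times\{1\})=\theta_\gamma\cup\theta_\beta$; and $K$ is obtained from the spine $\theta_\alpha\cup\theta_\beta\cup\theta_\gamma$ by capping the unlink $\theta_\gamma\cup\theta_\alpha$ with the system $\mathcal{D}$ of trivial disks it bounds, which is unique up to isotopy by Lemma 2.3 of \cite{bridge_trisections_4M}. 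I would make precise that, with respect to the height function $S^3\times I\to I$, a neighborhood of $\theta_\beta$ in $K$ is a product $\theta_\beta\times I$, so that the only critical behavior of $K$ is concentrated in how the trivial tangle $\theta_\alpha$ is exchanged for the trivial tangle $\theta_\gamma$ across the Heegaard sphere together with the caps $\mathcal{D}$.

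Second, I would put $K$ in Morse position and read off its movie presentation. A generic level link of $K$ between $S^3\times\{1\}$ and the middle of the product is isotopic to $C=\theta_\beta\cup\theta_\gamma$, and on the $S^3\times\{1\}$ side of such a level $K$ is a product $C\times I$ with no critical points. Passing from this level toward $S^3\times\{0\}$, the cobordism is a product on the $\theta_\beta$-rectangles and realizes the transition from $\theta_\gamma$ to $\theta_\alpha$, which (since $\theta_\alpha\cup\theta_\gamma$ is an unlink bounding $\mathcal{D}$) can be standardized so as to consist of exactly: one saddle per arc of $\theta_\alpha$, each such band lying on the sphere $S^2$ with core a shadow arc of $\theta_\alpha$ (so that the band carries the framing induced by $S^2$, which is the surface framing of $K$); followed by one minimum per component of $\theta_\alpha\cup\theta_\gamma$, namely a core of $\mathcal{D}$. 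Each such minimum absorbs exactly one band, which is precisely why one excises one arc of the shadow collection $\theta_\alpha^*$ for each component of $\theta_\alpha\cup\theta_\gamma$ and keeps only the remaining shadows $v$. Thus the pair $(C,v)$ --- the link $C$ with bands attached along $v$ and the resulting unlink capped off --- reconstructs $K^2$, which is the assertion.

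Third, I would check the bookkeeping: the number of bands is $|\theta_\alpha|-|\pi_0(\theta_\alpha\cup\theta_\gamma)|$, so the banded surface has the Euler characteristic of $K$; the framings and the matching with $\partial K$ on $S^3\times\{0,1\}$ agree; and, using once more the uniqueness of the disk system $\mathcal{D}$ and Lemma \ref{nerve_determines_everything}, one concludes that $(C,v)$ presents $K^2$ up to proper isotopy. The main obstacle is the standardization in the second step: showing that the sub-cobordism from $\theta_\gamma$ to $\theta_\alpha$, after isotopy, has exactly the asserted ``saddles, then minima'' handle structure with no superfluous critical points. This is the technical heart of Meier and Zupan's argument; it rests on the uniqueness of trivial disk systems bounded by an unlink together with a Morse-theoretic rearrangement of the cobordism. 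In the present relative setting one must additionally verify that every one of these moves is supported in a neighborhood of the Heegaard sphere $S^2$ disjoint from $\theta_\beta$, so that the product $\theta_\beta\times I$ is never disturbed --- which is the precise sense in which the change of setting is cosmetic.
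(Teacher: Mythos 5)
The paper offers no proof of this lemma: it is stated verbatim as a rephrasing of Lemma 3.3 of \cite{bridge_trisection_S4}, accompanied only by the remark that the bands of $v$ correspond to saddles of $K^2$ occurring inside the disk system $\mathcal{D}$ bounded by $\theta_\alpha\cup\theta_\gamma$. Your sketch reconstructs exactly the Meier--Zupan argument the paper implicitly relies on (product along $\theta_\beta$, saddles along shadows of $\theta_\alpha$, cancellation of one saddle against one cap per component of the unlink $\theta_\alpha\cup\theta_\gamma$, with the band count $b-|\pi_0(\theta_\alpha\cup\theta_\gamma)|$ matching $\chi(K)$), so it is correct in outline and consistent with, indeed more detailed than, what the paper provides.
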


The bands of $v$ correspond to saddles of $K^2$ ocurring inside the collection of disks $\mathcal{D}$ that $\theta_\alpha \cup \theta_\gamma$ bounds in the 4-ball. One can change, if desired, the roles of $\gamma$ and $\alpha$ in the statement of Lemma \ref{bridge_to_band} to get the ``reversed" banded diagram for $K^2$. 

\begin{examp} 
Figure \ref{poincare_manifold} exemplifies the procedure of Proposition \ref{branching_nerve} to obtain a movie presentation for the branching surface $K^2$ in the case that $X$ admits a symmetric nerve. This breaks in four steps as follows: 
\begin{enumerate}
\item Find a system of tunnels $t$ for $L$ in $Y$ so that the corresponding nerve is $p$-symmetric. 
\item Draw three collections of $g(\Sigma)+1$ non-separating curves in $\Sigma=\partial\eta(L\cup t)$ determining the handlebodies $H=\eta(L\cup t)$, $H'=\overline{Y-H}$, $H''=H[L]$. 
\item Project the curves with the finite order map $\tau : \Sigma \ra \Sigma$ to obtain the tuple $(\theta_\alpha, \theta_\beta, \theta_\gamma)$. 
\item Apply the algorithm described in Lemma \ref{bridge_to_band} to draw the banded diagram for $K^2$.
\end{enumerate}
\begin{figure}[h]
\centering
\includegraphics[scale=.11]{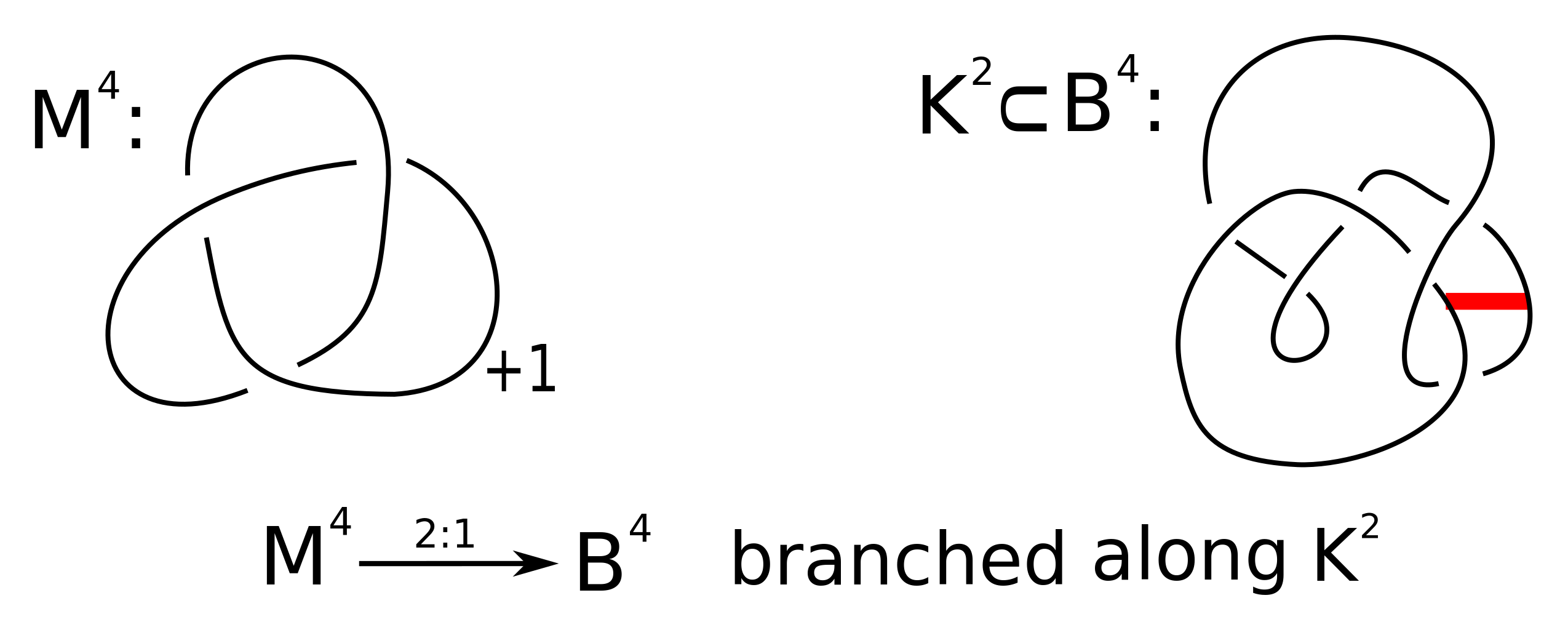}
\caption{The Poincare 4-manifold is a 2-fold cover of $B^4$ branched along a properly embedded annulus with boundary a hopf link.}
\end{figure}

\begin{figure}[h]
\centering
\includegraphics[scale=.08]{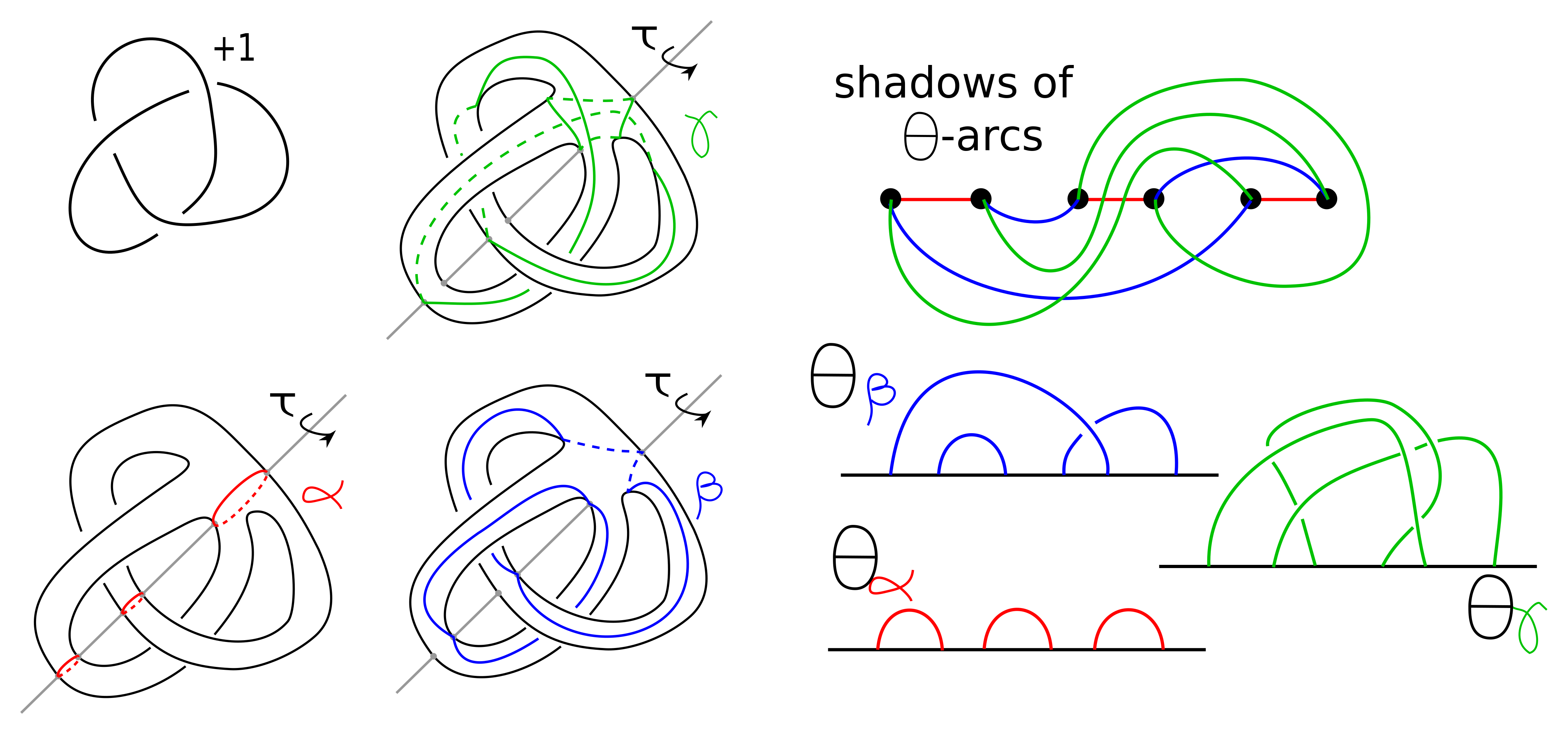}
\caption{Example of how to obtain the branched surface.}
\label{poincare_manifold}
\end{figure}
\end{examp}

\subsubsection{An application to trisections of 4-manifolds}\label{app_trisections}

We finish this subsection with an application of Proposition \ref{branching_nerve} to the theory of trisections of closed 4-manifolds. We will show that 4-manifolds with symmetric trisections are 2-fold covers of $S^4$ branched along knotted surfaces. It is important to mention that subsections \ref{extending_involutions} and \ref{width_less_than_5} are independent of the following. 

In \cite{hs_branched_covs}, J. Birman and H. Hilden studied the relations between ``p-symmetric" Heegaard splittings and branched covering representations of closed 3-manifolds. For example, they showed\footnote{This is Theorem 1 of \cite{hs_branched_covs}.} that every genus $g\geq 3$ Heegaard splitting of a closed 3-manifold may be represented as a $(4g-4)$-sheeted branched covering of $S^3$, with branching set a 1-manifold of at most $4g-4$ components. 
\begin{question}\label{question_cover}
Is it possible to extend the results of J. Birman and H. Hilden in \cite{hs_branched_covs} to the context of $p$-symmetric nerves of relative handlebodies? If so, can we do the converse of such results following M. Mulazanni's ideas in \cite{p-symmetric}? 
\end{question}

We can partially answer Question \ref{question_cover} in Theorem \ref{cor_sym_trisections}. The interested reader can compare this theorem in the case of $p=2$ with Corollary 11.3 of \cite{Akbulut_notes} or with Theorem 3 of \cite{Montesinos_4m}. One can think of this result as a trisection analogue of Theorems 2-5 in \cite{hs_branched_covs}.

\begin{thm} \label{cor_sym_trisections}
Let $M$ be a closed 4-manifold. Then $M$ is the $p$-fold cyclic cover of $S^4$ branched along a knotted surface $K^2\subset S^4$ if and only if $M$ admits a $p$-symmetric trisection diagram. 
\end{thm}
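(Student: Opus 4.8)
The plan is to prove both implications separately, using the machinery of nerves developed in Subsection~\ref{rel_handlebodies_trisections} together with Proposition~\ref{branching_nerve}.

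\textbf{The ``only if'' direction.} Suppose $M$ is the $p$-fold cyclic cover of $S^4$ branched along a knotted surface $K^2$. First I would put $K$ in a generic position with respect to a fixed height function $S^4 \to [0,1]$ (or rather, use the bridge trisection technology): by the work of Meier--Zupan on bridge trisections of knotted surfaces, $K^2 \subset S^4$ admits a bridge trisection, which cuts $S^4$ into three standard pieces $S^4 = Z_1 \cup Z_2 \cup Z_3$ with $Z_i \cong \natural^{k_i}(S^1\times B^3)$, meeting pairwise in handlebodies and triply in a surface $\Sigma_0$, and such that $K \cap Z_i$ is a trivial tangle (collection of boundary-parallel disks or arcs) in each $Z_i$. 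The branched set in each piece is unknotted, so $K$ meets the trisection of $S^4$ in the model way: $K\cap \Sigma_0$ is a set of points, $K$ meets each handlebody $H_{ij}$ in trivial arcs $\theta_{ij}$, and $K\cap Z_i$ is a trivial disk system. Taking the $p$-fold cyclic branched cover commutes with this decomposition: since the branched set is unknotted in each piece, the cover of each $Z_i$ is again a $1$-handlebody, the cover of each handlebody $H_{ij}$ branched over trivial arcs is again a handlebody, and the cover of $\Sigma_0$ branched over points is a closed surface $\Sigma$ carrying a free-away-from-fixed-points $\mathbb{Z}_p$-action. This exhibits $M$ as a trisection $M = X_1\cup X_2\cup X_3$ with nerve $(\Sigma; H, H', H'')$, and the deck transformation restricts to an order-$p$ map $\tau:\Sigma\to\Sigma$ extending over each handlebody with $B^3$ quotients, $\mathrm{Fix}(\tau)=\mathrm{Fix}(\tau^k)$, and image of $\mathrm{Fix}$ an unknotted arc system --- i.e. the trisection diagram is $p$-symmetric.

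\textbf{The ``if'' direction.} Suppose $M$ admits a $p$-symmetric trisection diagram $(\Sigma;\alpha,\beta,\gamma)$ with symmetry $\tau$. Write $M = X_1 \cup X_2 \cup X_3$ as usual, and recall from Subsection~\ref{rel_handlebodies_trisections} that $M = [b_0\cup C_1] \cup X(H, L) \cup [E_1 \cup b_4]$, where $X(H,L) = W(\Sigma;H,H',H'')$ is a relative handlebody and $H\cup_\Sigma H'' \cong \#_k S^1\times S^2$. Apply Proposition~\ref{branching_nerve} to the nerve $(\Sigma;H,H',H'')$: since it is $p$-symmetric, $X(H,L)$ is the $p$-fold cyclic cover of $S^3\times I$ branched along a properly embedded surface $K_0$, and $\tau$ extends over $X(H,L)$. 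It remains to extend $\tau$ over the two outer pieces $[b_0\cup C_1]$ and $[E_1\cup b_4]$ and to cap off the branched cover of $S^3\times I$ to one of $S^4$. The outer pieces are $1$-handlebodies $\natural^{k}(S^1\times B^3)$ (their boundaries being the two ends $H\cup H' = \partial_-$ and $H''\cup H' = \partial_+$, each $\#_k S^1\times S^2$); the key input here is that the restriction of $\tau$ to each of these boundary $\#_k S^1\times S^2$'s, which by construction realizes it as a $p$-fold branched cover of $S^3$ over an unlink (this is exactly where Plotnick's result and Lemma~2.3 of \cite{bridge_trisections_4M} were used inside Proposition~\ref{branching_nerve}), extends over the $1$-handlebody as a branched cover over trivial disks --- since a $p$-fold cyclic branched cover of $B^4$ over a trivial disk system is a $1$-handlebody, uniqueness of such fillings (and of the $3$- and $4$-handle attachments, by \cite{uniqueness_of_34_handles}) lets us glue the branched-cover descriptions of the three pieces together. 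The branched sets glue along the unlink $\theta_\gamma\cup\theta_\alpha$ (capped by trivial disks on each side) to give a closed knotted surface $K^2\subset S^4$, and the three partial deck transformations agree on overlaps, producing a global $\mathbb{Z}_p$-action on $M$ with quotient $S^4$ branched over $K$.

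\textbf{Main obstacle.} The hard part will be the gluing step in the ``if'' direction: extending the symmetry $\tau$ over the two $1$-handlebody pieces and checking that the three branched-cover pictures are compatible, so that they assemble into a single branched cover $M\to S^4$ rather than merely three branched covers of the three pieces of a trisection of $S^4$. Concretely one must verify that the restriction of $\tau$ to the gluing $3$-manifolds $\#_k S^1\times S^2$ is the \emph{standard} (unlink-branched) involution/periodic map in a way that bounds over the handlebody-with-$1$-handles, and that the complementary trivial disk systems $\mathcal{D}$ on the two sides match up inside $S^4$; this is where one leans hardest on Lemma~2.3 of \cite{bridge_trisections_4M} (uniqueness of trivial disk fillings of an unlink), on \cite{uniqueness_of_34_handles}, and on Lemma~\ref{nerve_determines_everything}. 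Everything else is bookkeeping with the correspondence between nerves and relative handlebodies already set up in the excerpt.
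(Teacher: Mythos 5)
Your proposal is correct and follows essentially the same route as the paper: the forward direction is the lift of a bridge trisection of $K^2\subset S^4$ (the paper simply cites Section 2.6 of \cite{bridge_trisection_S4}), and the backward direction is exactly the paper's argument, which exploits that all three pairwise unions $H\cup_\Sigma H'$, $H'\cup_\Sigma H''$, $H\cup_\Sigma H''$ are connected sums of $S^1\times S^2$ so that the argument of Proposition \ref{branching_nerve} (Plotnick, unique trivial disk fillings, and \cite{uniqueness_of_34_handles}) applies to each pair and extends the $\mathbb{Z}_p$-action over all of $M$ with quotient $S^4$. Your ``main obstacle'' gluing discussion is a faithful expansion of what the paper compresses into its one-sentence backward direction, not a different method.
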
 
\begin{proof} 
The forward direction was discussed in Section 2.6 of \cite{bridge_trisection_S4}.
For the backwards direction, recall that in a trisection the triplet $(\Sigma; H,H',H'')$ has the property that the 3-manifolds $H\cup_\Sigma H'$, $H'\cup_\Sigma H''$ and $H\cup_\Sigma H''$ are homeomorphic to connected sums of copies of $S^1\times S^2$. Thus for each pair, we can use the argument in Proposition \ref{branching_nerve} to extend the involution to all $M$. 
\end{proof}

\subsection{Extending involutions} \label{extending_involutions}

Let $\mathcal{H}=b_0\cup C_1\cup D_1 \cup E_1\cup \dots \cup E_N \cup b_4$ be a handle decomposition decomposition of a smooth $4-$manifold $M$. Denote by $N_i = b_0[C_1\cup D_1 \cup \dots \cup C_i]$, $M_i = N_i[D_i]$, $Y_i=\partial N_i$, $Z_i=\partial M_i$ and $X_i = \overline{M_i-N_i}$. $X_i$ is obtained by attaching 2-handles to $Y_i\times I$ along $L_i\times \{1\}$, so let $T_i=(\Sigma_i; H_i, H_i',H_i'')$ be a nerve for $X_i$ with $g(\Sigma_i)=g_i$. By construction, $\partial X_i$ is divided in two parts: $Y_i$ and $Z_i$. 
Suppose each $T_i$ is 2-symmetric. By Proposition \ref{branching_nerve}, there are involutions $\tau_i:X_i \ra X_i$ with fixed set a properly embedded surface $K_i\subset X_i$. By construction, $\tau_i|_{Y_i}$ and $\tau_i|_{Z_i}$ are induced by involutions on the handlebodies $H_i,H_i', H_i''$.

In $M$, $Z_{i-1}$ and $Y_i$ cobound a submanifold given by adding 3-handles $E_{i-1}$ and 1-handles $C_i$ to $Z_{i-1}\times I$ along $Z_{i-1}\times \{1\}$. 
\textbf{We are interested in knowing under what conditions} the involution on the pair $(Z_{i-1},Y_i)$ can be extended to the interior of such cobordism, obtaining involutions on bigger pieces of $M$. 
With the setting just described, we can extend the involution when adding 1-handles. 

%
%

\begin{lem} \label{surviving_1h}
Let $\tau$ be an involution of a closed 3-manifold $Y$ (possibly disconnected) with 1-dimensional fixed set and let $W$ be the 4-manifolds obtained from $[0,1]\times Y$ by adding a 1-handle along $\{1\}\times I$. Then there is an involution $\wt \tau$ of $W$ so that $\wt \tau|_{\{0\}\times Y}=\tau$. Furthermore, $W/\wt \tau$ is diffeomorphic to $[0,1]\times (Y/\tau)$ with a 1-handle attached. 
\end{lem}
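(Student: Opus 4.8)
The plan is to build the extension $\wt\tau$ explicitly on a standard model for the $1$-handle attachment and then argue that the resulting map has the stated properties. First I would set up coordinates: a $4$-dimensional $1$-handle is $D^1\times D^3$ attached to $\{1\}\times Y$ along $(\partial D^1)\times D^3=\{\pm1\}\times D^3$, where the two copies of $D^3$ are disjoint $3$-balls $B_+,B_-\subset Y$. The hypothesis that the attaching region is $\{1\}\times I$ — i.e. that the framed arc along which we attach is a small unknotted arc — means we may choose $B_+,B_-$ so that the involution $\tau$ either preserves each of them or swaps them, and in either case restricts to a \emph{standard} linear involution of $D^3$ (a rotation about an axis), because $Fix(\tau)$ is $1$-dimensional and we are free to isotope everything near the arc to a model neighborhood. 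The key step is then: on the handle $D^1\times D^3$, define $\wt\tau$ by the identity (or the appropriate swap of the two feet) on the $D^1$ factor together with the same linear involution on the $D^3$ factor, and on the collar $[0,1]\times Y$ set $\wt\tau = \mathrm{id}\times\tau$. These two pieces agree on the overlap $\{1\}\times(B_+\cup B_-)$ by construction, so they glue to a smooth involution $\wt\tau$ of $W$ with $\wt\tau|_{\{0\}\times Y}=\tau$.

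Next I would check the fixed set and the quotient. Since the model involution on the $D^3$ factor of the handle fixes an axis $\{0\}\times(\text{axis})\subset D^1\times D^3$, the fixed set of $\wt\tau$ is $([0,1]\times Fix(\tau))$ with a $1$-dimensional arc attached, hence again $1$-dimensional; in particular $\wt\tau$ is a genuine finite-order diffeomorphism with the expected local model at fixed points. For the quotient statement, note $([0,1]\times Y)/\wt\tau = [0,1]\times(Y/\tau)$ since $\wt\tau$ acts trivially on the $[0,1]$ factor. The handle $D^1\times D^3$ modulo the linear $D^3$-involution is again $D^1\times D^3$ (an equivariant $D^3$ with a rotation has quotient $D^3$, and when the two feet are swapped one first folds $D^1\times D^3$ in half, still obtaining a $1$-handle); and the attaching region $\{\pm1\}\times D^3$ descends to the attaching region of this quotient handle in $Z\times I$. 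Reassembling, $W/\wt\tau$ is $[0,1]\times(Y/\tau)$ with a $1$-handle attached, as claimed.

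The main obstacle I expect is verifying that the attaching data can genuinely be normalized to the equivariant model — that is, that the $3$-balls $B_\pm$ and the framing of the core arc can be chosen compatibly with $\tau$. This uses that $\tau$ has $1$-dimensional fixed set so it is locally standard (smoothly, this is the local linearizability of finite cyclic group actions near fixed points, or equivariant tubular neighborhoods away from them), together with the fact that the hypothesis ``add a $1$-handle along $\{1\}\times I$'' already presupposes the attaching arc sits in a trivial product chart, which we are free to choose $\tau$-invariant. Once this normalization is in hand, the gluing and the quotient computation are routine, so the only real content is the careful choice of equivariant coordinates around the core of the $1$-handle. $\qed$
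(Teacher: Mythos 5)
Your construction is essentially the paper's proof: the paper also takes the handle to be $[-1,1]\times B^3$ with the involution $\sigma(x,w,t)=(x,-w,t)$ (identity on the interval factor, rotation about an axis of the $B^3$ factor), chooses the two feet $V_\pm\subset Y$ so that each is $\tau$-invariant and meets $Fix(\tau)$ in a single unknotted arc with $\tau$ standard there, and glues $\sigma$ to $\mathrm{id}\times\tau$; the quotient computation is the same.

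Two points in your write-up are off, though neither sinks the lemma. First, the fixed-set bookkeeping: $[0,1]\times Fix(\tau)$ is already $2$-dimensional, and on the handle the fixed locus of $\mathrm{id}_{D^1}\times(\text{rotation})$ is $D^1\times(\text{axis})$, a band, not an arc; so $Fix(\wt\tau)$ is a properly embedded surface obtained from $[0,1]\times Fix(\tau)$ by attaching a $2$-dimensional $1$-handle (this is exactly what the paper records afterwards, and it is what the application to branched covers needs), not a $1$-dimensional set. The lemma's stated conclusions don't mention the fixed set, so this is a slip rather than a gap, but you should correct it. Second, your alternative branch in which $\tau$ swaps the two feet does not yield the stated quotient: the quotient of $D^1\times D^3$ by $(s,y)\mapsto(-s,\rho(y))$ is a $4$-ball attached to $[0,1]\times(Y/\tau)$ along a \emph{single} $3$-ball (the two feet are identified), which changes nothing, so $W/\wt\tau$ would be $[0,1]\times(Y/\tau)$ with no $1$-handle; the claim that ``folding in half still gives a $1$-handle'' is false. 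This branch is also unnecessary: since you are free to position the attaching balls, simply choose them, as the paper does, to be $\tau$-invariant and each meeting $Fix(\tau)$ in one arc, and run only the foot-preserving case.
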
 

\begin{proof} 
Write $B^4 \approx [-1,1]\times B^3$ where $B^3\subset \C\times \R$ has coordinates $(w,t)$. Consider $g:B^4 \ra [-1,1]$ given by $g(x,w,t)=-x^2 +|w|^2 + t^2$. The map $g$ models a 4-dimensional 1-handle attachement. Define $\sigma: B^4 \ra B^4$ by $\sigma(x,w,t)=(x,-w,t)$. $\sigma$ is an involution of $B^4$ satisfying $\sigma(g^{-1}(\delta))=g^{-1}(\delta)$ for all $\delta$. 
Notice that $B^4/\sigma \approx [-1,1] \times B^3$ is again a 4-ball, and the quotient map $B^4 \ra B^4/\sigma$ is a 2-fold cover of $B^4$ branched along the 2-disk 
\[ F = Fix(\sigma) = \left\{ (x,(0,0),t): |x|\leq 1, |t|\leq 1\right\}.\]
$g|_F$ is given  by $(x,(0,0),t) \mapsto -x^2 +t^2$ which models a 2-dimensional 1-handle attachement. 
Take $V_-,V_+ \subset Y^3$ to be two disjoint closed 3-balls so that $V_\pm \cap Fix(\tau)$ is one arc. Pick coordinates for $V_\pm$ so that $V_\pm =\{ (v,t) \in \C\times \R : |v|\leq 1, |t|\leq 1\}$ and $\tau|_{V_\pm}:(v,t) \mapsto (-v,t)$. 
By hypothesis, $W\approx [0,1]\times Y^3 \cup_f B^4$ where 
\begin{align*} 
f:\{\pm1\}\times B^3 \ra &V_+\cup V_- \subset \{1\}\times Y^3\\
(+1,w,t) \mapsto & (w,t)\in V_+ \\ 
(-1,w,t) \mapsto & (w,t) \in V_-. 
\end{align*} 
By construction, $\sigma$ and $id_{[0,1]}\times \tau$ agree on a neigborhood of $V_+\cup V_-\subset \{1\}\times Y$. We then obtain the desired involution $\wt \tau=\sigma\cup_f \tau$ in $W$.
\end{proof} 

\begin{remark}\label{comments_surviving_1h}
Let $C\subset \partial W - \big(\{0\}\times Y\big)$ be the new boundary component of $W$; we have $C \approx Y \# S^1 \times S^2$. Furthermore, one can check that 
\[ Fix(\wt \tau|_{C}) = \big(Fix(\tau|_{Y^3})-int(V_+\cup V_-)\big) \cup \left\{(x,(0,0),\pm 1) : |x|\leq 1\right\} \]
Thus the action of $\wt \tau$ before and after the 1-handle attachment in Lemma \ref{surviving_1h} is described by Figure \ref{survive_1h}. 
In particular, if $V_+$ and $V_-$ are neigborhoods of points of the intersection $Fix(\tau)\pitchfork \Sigma$ where $\Sigma\subset Y$ is a Heegaard surface fixed setwise by $\tau$, then the involution $\wt \tau|_{C}$ will delete the corresponding intersection points between $\wt \tau_C$ and the new Heegaard surface. 

\begin{figure}[h]
\centering
\includegraphics[scale=.055]{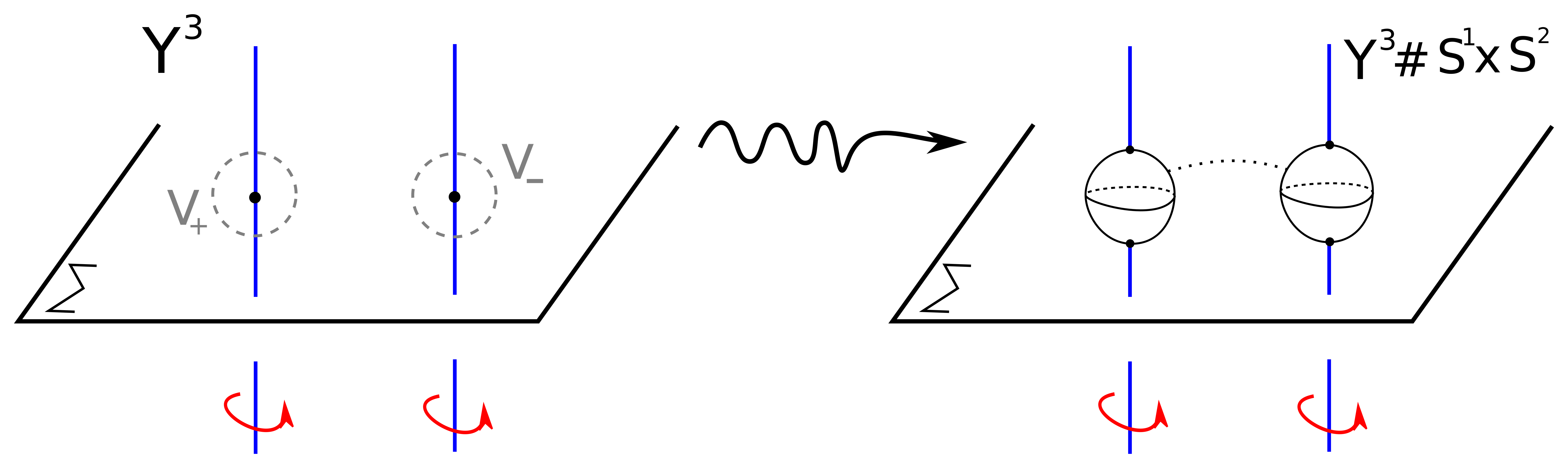}
\caption{How the extension of the involution in $Y^3$ looks once we decided the attaching region of the 1-handle. The Heegaard surface (plane in the picture) can ``survive" the 1-handle attachement.}
\label{survive_1h}
\end{figure}
\end{remark}

%

\subsubsection{When the width is less than \{5\}}\label{width_less_than_5}

In \cite{thin_position_of_3m}, M. Scharlemann and A. Thompson showed that if the width of a 3-manifold is less than \{5\}, then it is a 2-fold branched cover of a connected sum of copies of $S^1\times S^2$. In the following subsection, we will apply the previous discussion to describe an attempt of proving the analogue of this result in dimension four, say: if $M$ is a 4-manifold with $width(M)<\{5\}$, then $M$ is the 2-fold cover of connected sum of copies of $S^1\times S^3$ with $|\partial M|$ 4-balls removed, branched along a properly embedded surface.

The outline of the solution is analogous to the original one in \cite{thin_position_of_3m}: we first show that the $X_i$ blocks are branched covers of copies of $S^3\times I$ and then we study how to paste them together preserving the branched covering map. In dimension three this pasting problem resulted being a known mapping class group problem. 
From the 3-manifold theory perspective, it is interesting how this attempt reduces a 4-dimensional problem to the study of non-uniqueness of genus two Heegaard splittings of closed 3-manifolds. 

\begin{prop} \label{branching_low_width}
If $width(D)<\{5\}$, each $X_i$ admits a smooth involution $\tau_i:X_i\ra X_i$ such that the projection map $p_i:X_i \ra X_i/\tau_i \approx S^3\times I$, is a 2-fold cover of $S^3\times I$ branched along a properly embedded surface $K_i$. Furthermore, for $i=1,n$, we can take the involution to be defined on $M_1$ and $\overline{M-N_n}$, respectively.\end{prop}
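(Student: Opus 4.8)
The plan is to extract a genus bound from the width hypothesis, upgrade it to $2$-symmetry of the nerves of the blocks $X_i$, and then invoke Proposition~\ref{branching_nerve}. For the first part, note that since the order on multisets compares largest entries first, $width(D)<\{5\}$ forces each complexity to satisfy $c_i\leq 4$, hence $c_i\leq 3$ because the $c_i$ are odd. By the nerve description of the width recorded in Subsection~\ref{rel_handlebodies_width}, $c_i=\sum_{\Sigma'\subset\Sigma_i}\max\{2g(\Sigma')-1,0\}$ for a minimal-genus nerve $(\Sigma_i;H_i,H_i',H_i'')$ of $X_i$, so every connected component $\Sigma'$ of $\Sigma_i$ has $g(\Sigma')\leq 2$; the components with $g(\Sigma')=0$ correspond to trivial blocks $X'\approx S^3\times I$ with $L'=\emptyset$ and nerve $(S^2;B^3,B^3,B^3)$ as in Remark~\ref{nerve_S3}.

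Next I would show that every nerve $(\Sigma';H,H',H'')$ with $g:=g(\Sigma')\leq 2$ is $2$-symmetric. Here I would invoke the classical fact that the hyperelliptic involution of $\Sigma_g$ (the elliptic involution of the torus when $g=1$, an order-$2$ rotation of $S^2$ when $g=0$) is central in $\mathrm{Mod}(\Sigma_g)$ and extends over \emph{every} handlebody $V$ with $\partial V=\Sigma_g$, the orbit space being a $3$-ball and the image of the fixed set an unknotted collection of $g+1$ arcs. Concretely, after an isotopy of the defining curves --- which changes neither the three handlebodies nor, by Lemma~\ref{nerve_determines_everything}, the manifold $W$ --- one may take all three meridian systems defining $H,H',H''$ to be invariant under one and the same involution $\tau$ of $\Sigma'$, so that $\tau$ simultaneously extends over $H$, $H'$ and $H''$; by the remark following Definition~\ref{nerve} this is exactly $2$-symmetry (the $g=0$ pieces being $2$-symmetric for obvious reasons). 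Assembling $\tau$ over the components turns the full nerve $(\Sigma_i;H_i,H_i',H_i'')$ into a $2$-symmetric nerve of $X_i$.

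Now Proposition~\ref{branching_nerve} applied with $p=2$ (componentwise if $X_i$ is disconnected) produces the desired involution $\tau_i:X_i\ra X_i$ exhibiting $X_i$ as the $2$-fold cyclic cover of $S^3\times I$ branched along a properly embedded surface $K_i$. For the last assertion, observe that for $i=1$ the nerve satisfies $H_1\cup_{\Sigma_1}H_1'=Y_1=\partial N_1=\#_{|C_1|}S^1\times S^2$, so the branched-cover construction may also be run ``from the $H'$-side'' exactly as in the remark following Proposition~\ref{branching_nerve}; equivalently, $\tau_1|_{Y_1}$ extends across the $1$-handles $C_1$ by Lemma~\ref{surviving_1h} and across $b_0$ by coning, producing an involution of $M_1=N_1[D_1]$ that restricts to $\tau_1$ on $X_1$. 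For $i=n$ one passes to the upside-down decomposition $\mathcal{H}^{op}$, which has the same width by Lemma~\ref{lem_op} and whose nerves are the $W(\Sigma;H'',H',H)$ of Remark~\ref{W_op}; there $\overline{M-N_n}$ is a block of the form $b_0\cup C_1\cup D_1$ whose outer boundary $Z_n=\#_{|E_n|}S^1\times S^2$ (it is capped off by the $3$-handles $E_n$ and the $4$-handle $b_4$), so the $i=1$ argument applies verbatim to give the involution on $\overline{M-N_n}$.

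The main obstacle is the second step: one needs that every genus-$1$ and genus-$2$ Heegaard-type splitting is hyperelliptic and that the extension of $\tau$ over each handlebody has the required unknotted-arc quotient. This is exactly where $width(D)<\{5\}$, i.e. $g\leq 2$, is essential, since for $g\geq 3$ there exist non-hyperelliptic splittings and a single involution need no longer extend over all three handlebodies. A secondary technical point, needed in the last step, is to check via Remark~\ref{comments_surviving_1h} that the relevant Heegaard surface ``survives'' the $1$-handle attachments, so that the extended involution on the larger block $M_1$ (resp. $\overline{M-N_n}$) is still of branched-covering type.
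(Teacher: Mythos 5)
Your proposal is correct and takes essentially the same route as the paper: $width<\{5\}$ forces genus $\leq 2$ nerves, the hyperelliptic involution (central for $g\leq 2$) extends over all three handlebodies so each nerve is $2$-symmetric, and Proposition \ref{branching_nerve} then yields the branched covering, while for $i=1,n$ the paper argues exactly as in the remark following Proposition \ref{branching_nerve} (the quotient link $\theta_\alpha\cup\theta_\beta$ is an unlink, so $X_1$ is capped off with the unique trivial disks and $3$-/$4$-handles), which is the first of your two suggested routes. Your alternative of coning over $b_0$ and extending across $C_1$ via Lemma \ref{surviving_1h} would additionally require matching the resulting involution on $Y_1=\#_{|C_1|}S^1\times S^2$ with $\tau_1|_{Y_1}$, i.e.\ the uniqueness of involutions on such manifolds --- the same fact the paper quotes --- so it is a harmless variant rather than a gap.
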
 
\begin{proof}
Recall that $X_i$ is obtained by attaching 2-handles to $Y_i \times I$ along $L_i \times \{1\}$. Since $\{c_i=2g_i-1\}=width(D)<\{5\}$, $g_i\leq 2$ which gives us the existance of a Heegaard splitting $H_i\cup_{\Sigma_i}H_i'$ of $Y_i$ of genus at most 2 such that $L_i\subset core(H_i)$. Then, $T_i=(\Sigma_i; H_i, H_i', H_i''=H_i[L_i])$ is a nerve for $X_i$ by Lemma \ref{lem_nerve_of_X(Y,L)}. The hyperelliptic involution on $\Sigma$ fixes the isotopy class of unoriented curves in $\Sigma$, so it extends to the interior of $H_i$, $H_i'$ and $H_i''$; thus $T_i$ is 2-symmetric for every $i$. The result follows from Proposition \ref{branching_nerve}.

When $i=1$, $H_1\cup _{\Sigma_1}H_1'$ is a Heegaard splitting for $\#_{|C_1|}S^1\times S^2$. By uniqueness of involutions on such manifolds, we will obtain that $\theta_\alpha\cup\theta_\beta$ is also an unlink in $\#_{|C_1|}S^1\times S^2$, thus we can cap $X_1$ off with 3- and 4-handles on that side and extend the surface (with its involution) with the unique boundary parallel disks; obtaining a branched cover $M_i\ra B^4$. We proceed analugously for $i=n$. 
\end{proof}

We now proceed to describe how to paste the $X_i$ blocks preserving their involutions. 
Since $width(\mathcal{H})<\{5\}$, we have that $HG(Y')\leq 2$ and $HG(Z')\leq 2$ for every connected component $Y'\subset Y_i$ and $Z'\subset Z_i$. Let $V_i=Z_i[E_i]$. For simplicity of the argument, assume $Z_i$ and $Y_{i+1}$ are connected.

Suppose first that there is a 3-handle of $E_i$ with attaching region a non-separating sphere in $Z_i$. Then $Z_i=S^1\times S^2\#L(p,q)$, with $L(p,q)$ a (possibly trivial) lens space. It follows that $Z_i$ has a unique heegaard spliting and so a unique involution. $V_i$ is a disjoint union of copies of $S^3$ and one $L(p,q)$. Take the standard involutions on then and apply the extension in Lemma \ref{surviving_1h} with the 1-handles $E_i^{op}$ to obtain an involution on the cobordism between $Z_i$ and $V_i$. Then apply Lemma \ref{surviving_1h} again to the 1-handles of $C_{i+1}$ to obtain an involution connecting $Z_i$ and $Y_{i+1}$. In this case, $Y_{i+1}$ is of the form $S^1\times S^2\#L(p,q)$ which has a unique involution. Hence, we can paste $X_i$ and $X_{i+1}$. 

Suppose now that $Z_i$ does not contain a $S^1\times S^2$ summand. Then the attaching regions of the 3-handles of $E_i$ induce a connected sum decomposition of $Z_i=\#_j X_{i,j}$ (with possibly trivial pieces). Proposition 2 of \cite{Plotnick} states that if $Z_i$ is the 2-fold cover of $S^3$ branched along $J_i$, then $J_i=\#_j J_{i,j}$ splits as a connected sum where $X_{i,j}$ is the 2-fold cover of $S^3$ along $J_{i,j}$. Taking then the involutions on each $X_{i,j}$ induced by the cover gives us an involution on $V_i=Z_i[E_i]=\coprod_{j} X_{i,i}$. By Lemma \ref{surviving_1h} to the 1-handles $E_i^{op}$ we get an involution on the cobordism between $Z_i$ and $V_i$. Applying Lemma \ref{surviving_1h} again with the 1-handles of $C_{i+1}$ gives us a involution connecting $Z_i$ with $Y_{i+1}$. 
\textbf{The issue here is} that the resulting involution in $Y_{i+1}$ may not be isotopic to the involution on $Y_{i+1}$ induced by the 2-symmety on $X_{i+1}$. 
This is due to the non-uniqueness of genus 2 Heegaard splittings for certain irreducible 3-manifolds. This motivates more the study of the problem of finding a suitable set of Montesinos moves for 2-fold branched coverings of $S^3$. 

\begin{question}
Is the space of involutions for a irreducible 3-manifold $Y$ of Heegaard genus 2 connected under a suitable topology?  
Given $\tau_0,\tau_1$ two involutions of $Y$, is there a smooth one parameter family of maps $\{\tau_s:s\in [0,1]\}$ connecting $\tau_0$ with $\tau_1$ so that for all but finitely many values $\tau_s$ is an smooth involution?
\end{question}


\end{document}